\def\final{1}
\titleformat{\subsection}[runin]
{\normalfont\normalsize\bfseries\filcenter}{\thesubsection.}{1 ex}{}
\newcommand{\mynote}[1]{\marginpar{\tiny\sf #1}}
\newcommand{\mynote}[1]{}
\newcommand{\figref}[1]{Figure \ref{fig:#1}}
\newcommand{\lemref}[1]{Lemma \ref{lemma:#1}}
\newcommand{\propref}[1]{Proposition \ref{prop:#1}}
\newcommand{\theoref}[1]{Theorem \ref{theo:#1}}
\newcommand{\secref}[1]{Section \ref{sec:#1}}
\renewcommand{\eqref}[1]{(\ref{eq:#1})}
\newcommand{\lemlab}[1]{\label{lemma:#1}}
\newcommand{\proplab}[1]{\label{prop:#1}}
\newcommand{\theolab}[1]{\label{theo:#1}}
\newcommand{\seclab}[1]{\label{sec:#1}}
\newcommand{\eqlab}[1]{\label{eq:#1}}
\renewcommand{\vec}[1]{\mathbf{#1}}
\newcommand{\iprod}[2]{\left\langle {#1},{#2}\right\rangle}
\newcommand{\bgamma}{\bm{\gamma}}
\newcommand{\Euc}{\operatorname{Euc}}
\renewcommand{\O}{\operatorname{O}}
\newcommand{\Aut}{\operatorname{Aut}}
\newcommand{\Cent}{\operatorname{Cent}}
\newcommand{\eop}{\hfill$\qed$}
\newcommand{\R}{\mathbb{R}}
\newcommand{\Z}{\mathbb{Z}}
\DeclareMathOperator{\HH}{H}
\begin{document}
\title{Frameworks with forced symmetry I: \\ Reflections and rotations}
\author{Justin Malestein\thanks{Mathematisches Institut der Universität Bonn, \url{justinmalestein@gmail.com}}
\and Louis Theran\thanks{Aalto Science Institute and Department of Computer Science, Aalto University \url{louis.theran@aalto.fi}}}
\date{}
\maketitle
\begin{abstract}
\begin{normalsize}
We give a combinatorial characterization of generic
frameworks that are minimally rigid under the additional
constraint of maintaining symmetry with respect to a finite order rotation
or a reflection.  To establish these results we develop a new technique
for deriving linear representations of sparsity matroids on colored graphs
and extend the direction network method of proving rigidity characterizations
to handle reflections.
\end{normalsize}
\end{abstract}

\section{Introduction} \seclab{intro}
A \emph{$\Gamma$-framework} is a planar structure made of \emph{fixed-length bars} connected by
\emph{universal joints} with full rotational freedom.  Additionally, the bars and joints  are symmetric with
respect to the action of a point group $\Gamma$.  The allowed motions preserve the \emph{length} and
\emph{connectivity} of the bars and \emph{symmetry} with respect to the same group $\Gamma$.  This
model is very similar in spirit to that of the \emph{periodic frameworks}, introduced in \cite{BS10},
that have recently received a lot of attention, motivated mainly by applications to zeolites \cite{SWTT06,R06};
see \cite{MT10} for a discussion of the history.

When all the allowed motions are Euclidean isometries, a framework is \emph{rigid} and otherwise it is
\emph{flexible}.  In this paper, we give a \emph{combinatorial} characterization of minimally rigid, generic
$\Gamma$-frameworks when $\Gamma$ is either a finite group of rotations or a $2$ element group
generated by a reflection. Thus, $\Gamma \subset \O(2)$ and acts linearly on $\R^2$ in
the natural way. To minimize notation,
we call these \emph{cone frameworks} and \emph{reflection frameworks} respectively.  Since these $\Gamma$
are isomorphic to some $\Z/k\Z$ for an integer $k\ge 2$, we also make this identification from now on.

\subsection{The algebraic setup and combinatorial model}
Formally, a $\Gamma$-framework is given by a triple $(\tilde{G},\varphi,\tilde{\bm{\ell}})$,
where $\tilde{G}$ is a finite graph, $\varphi$ is a $\Gamma$-action on $\tilde{G}$ that is
free on the vertices and edges, and $\tilde{\bm{\ell}} = (\ell_{ij})_{ij\in E(\tilde{G})}$
is a vector of positive \emph{edge lengths} assigned to the edges of $\tilde{G}$.  A
\emph{realization} $\tilde{G}(\vec p)$ is an assignment of points $\vec p = (\vec p_i)_{i\in V(\tilde{G})}$
such that:
\begin{eqnarray}\eqlab{lengths-1}
||\vec p_j - \vec p_i||^2 = \ell_{ij}^2 & \qquad \text{for all edges $ij\in E(\tilde{G})$}, \\
\eqlab{lengths-2}
\vec p_{\varphi(\gamma)\cdot i} = \gamma \cdot\vec p_i & \qquad
\text{for all $\gamma\in \Z/k\Z$ and $i\in V(\tilde{G})$}.
\end{eqnarray}
The set of all realizations is defined to be the \emph{realization space}
$\mathcal{R}(\tilde{G},\varphi,\tilde{\bm{\ell}})$. Since we require symmetry
with respect to a fixed subgroup $\Gamma \subset \Euc(2)$ and not all rigid motions
of the points preserve the symmetry, the correct
notion of configuration space is
$\mathcal{C}(\tilde{G},\varphi,\tilde{\bm{\ell}}) =
\mathcal{R}(\tilde{G},\varphi,\tilde{\bm{\ell}})/\Cent(\Gamma)$
where $\Cent(\Gamma)$ is the centralizer of $\Gamma$ in $\Euc(2)$.
A realization is \emph{rigid} if it is isolated in the
configuration space and otherwise \emph{flexible}.  A realization is \emph{minimally rigid}
if it is rigid but ceases to be so after removing any $\Gamma$-orbit of edges from $\tilde{G}$.
This definition of rigidity corresponds to the
intuitive one given above, since a result of Milnor \cite[Lemma 3.1]{M68} implies that if a point is not
isolated in the configuration space, there is a smooth path through it. An equivalent formulation of
rigidity is that the only continuous length-preserving, symmetry-preserving motions are
``trivial''. In this case, the trivial motions are precisely $\Cent(\Gamma)$ which is one-dimensional
for both rotational and reflective symmetry.

As the combinatorial model for cone and reflection frameworks it will be more convenient
to use colored graphs.  Similarly to \cite{MT10, TFR08, BS02}, we define a \emph{colored graph}%
\footnote{Colored graphs in this sense are also called ``gain graphs'' in the literature, e.g. \cite{Z82}.}
$(G,\bgamma)$ to be a finite,
directed graph $G$, with an assignment $\bgamma = (\gamma_{ij})_{ij\in E(G)}$
of an element of a group $\Gamma$ to each edge.
A straightforward specialization of covering space theory
(see, e.g., \cite[Section 9]{MT11}) associates $(\tilde{G},\varphi)$ with a colored
graph $(G,\bgamma)$: $G$ is the quotient of $\tilde{G}$ by $\Gamma$, and the
colors encode the covering map $\tilde{G} \to G$ via a natural map $\rho : \pi_1(G,b) \to \Gamma$.
In this setting, the choice of base vertex does not matter, and indeed, we may
define $\rho : \HH_1(G, \Z)\to \Z/k\Z$ and obtain the same theory. See
\secref{rho} for the definition of $\rho$ via the colors.

\paragraph{Remark}
For our main theorems, we require that the symmetry group act freely on vertices and edges. Removing this restriction,
one can realize, in the case of rotational symmetry, a framework where the group acts with a fixed vertex
and with inverted edges. It is easy to reduce problems of rigidity for nonfree actions to free actions, and we do
so in \secref{rotnonfree}. In the case of reflection symmetry, we can similarly extend our rigidity results
to actions with inverted edges (but not fixed vertices or edges) by reducing to free actions.
See the last remark in \secref{refllamanproof}.

\subsection{Main Theorems}
We can now state the main results of this paper.
The \emph{cone-Laman} and \emph{reflection-Laman graphs}
appearing in the statement are defined in \secref{matroid}.
\begin{theorem}[\conelaman]\theolab{cone-laman}
A generic rotation framework is minimally rigid if and only if its associated colored
graph is cone-Laman.
\end{theorem}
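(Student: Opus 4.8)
The plan is to prove the two implications of \theoref{cone-laman} separately: necessity (minimal rigidity $\Rightarrow$ cone-Laman) by a Maxwell-type dimension count, and sufficiency (cone-Laman $\Rightarrow$ generic minimal rigidity) by the direction-network method, with the latter carrying essentially all of the difficulty.

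\textbf{Necessity.} First I would pass from the realization space to the \emph{symmetric rigidity matrix}, the differential at a realization of the map recording the squared edge lengths, restricted to the affine space of symmetric configurations cut out by \eqref{lengths-2}. Its rows are indexed by the edges of the quotient graph $G$ and its columns by two coordinates per vertex of $G$, so it is an $m \times 2n$ matrix, where $n=|V(G)|$ and $m=|E(G)|$. Because $\Cent(\Gamma)\cong\mathrm{SO}(2)$ is one-dimensional, the kernel at a generic realization always contains the one-parameter family of rotations about the center of symmetry; rigidity means the kernel is exactly this line, i.e. the rank is $2n-1$, and minimal rigidity means in addition that the rows are independent, forcing $m=2n-1$. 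Restricting the matrix to a subframework and using that its rows must also be independent yields the per-subgraph sparsity inequalities: a $\rho$-trivial (balanced) connected subgraph lifts to $k$ disjoint ordinary frameworks, so its block reduces to an ordinary rigidity matrix and independence forces the Laman bound $m'\le 2n'-3$, while a subgraph with nontrivial $\rho$-image retains only the single rotational motion and gives the reduced bound $m'\le 2n'-1$. Together these are the defining counts of a cone-Laman graph.

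\textbf{Sufficiency via direction networks.} This is the core. I would use the \emph{direction-network} reformulation: fix, for each edge $ij$ of $G$, a normal vector $\vec n_{ij}$, and seek symmetric configurations $\vec p$ with $\iprod{\vec n_{ij}}{\gamma_{ij}\cdot\vec p_j-\vec p_i}=0$ for every edge. This is a linear system whose coefficient matrix $M(G,\bgamma;\vec n)$ has the same row/column structure as the rigidity matrix, and the dilations about the center always lie in its kernel. The heart of the argument — and the step I expect to be hardest — is to prove that for generic $\vec n$ the matrix $M$ is a \emph{linear representation of the cone-Laman matroid}: a set of edges is independent in the matroid precisely when the corresponding rows of $M$ are independent. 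For an uncolored graph this is the classical fact behind Laman's theorem, but the $\rho$-twist requires the new technique announced in the abstract. Here I would identify $\R^2\cong\C$ and the rotation by $2\pi/k$ with multiplication by $\omega=e^{2\pi i/k}$, so that each constraint \eqref{lengths-2} reads $\vec p_{\varphi(\gamma)\cdot i}=\omega^{a(\gamma)}\vec p_i$ and the entries of $M$ become explicit expressions in the roots of unity $\omega^{\gamma_{ij}}$. The rank is then controlled by choosing the normals in a structured-but-generic way and analyzing the system componentwise, with combinatorial input from a decomposition (equivalently, a Henneberg-type inductive construction) of the cone-Laman graph that is preserved under the rank computation. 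The delicate point is certifying row-independence on \emph{every} subgraph simultaneously, not merely the top-level count $m=2n-1$.

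\textbf{Transfer to rigidity.} Finally I would close the loop between directions and lengths. For a cone-Laman graph and generic $\vec n$ the representation gives $\rk M=2n-1$ with kernel exactly the dilations, so up to scaling there is a unique configuration $\vec p^*$ realizing the prescribed directions. A nontrivial infinitesimal flex $\vec u$ of $\vec p^*$ as a length framework satisfies $\vec u_j-\vec u_i\perp(\gamma_{ij}\cdot\vec p^*_j-\vec p^*_i)$, so multiplying $\vec u$ by $i$ (which commutes with the $\omega$-action, hence preserves symmetry) produces a direction-network solution; this solution would be independent of the dilations, contradicting $\rk M=2n-1$. Hence $\vec p^*$ is infinitesimally rigid, and since the dilation kernel corresponds under this $90^\circ$ rotation to the rotational motion in $\Cent(\Gamma)$, the correspondence is exact. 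Infinitesimal rigidity is an open, dense condition and $\vec p^*$ may be taken generic, so the smoothness result of Milnor quoted above upgrades this to rigidity; independence of the edge rows together with $m=2n-1$ gives minimality, completing the sufficiency direction.
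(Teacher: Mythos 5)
Your overall architecture coincides with the paper's: a Maxwell-type count for necessity (your per-subgraph analysis, splitting into $\rho$-trivial and $\rho$-nontrivial pieces, matches the paper's Maxwell direction and is sound), and for sufficiency the direction-network method with the $90^\circ$-rotation transfer — your observation that multiplication by $i$ commutes with the $\Z/k\Z$-action and so carries infinitesimal flexes to direction-network solutions is exactly the paper's \propref{same-rank}, including the remark that this is the step that fails for reflections. However, the heart of sufficiency is left as an assertion. You say the hardest step is to show that for generic normals the matrix of \eqref{colored-system} linearly represents the cone-Laman matroid, and you gesture at ``structured-but-generic'' normals, componentwise analysis, and a Henneberg-type inductive construction; but no such induction is carried out, and it is not routine (inverse Henneberg moves on colored graphs must respect the $\rho$-image bookkeeping, and no such construction is given or cited). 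The paper explicitly notes that standard linear-representation results for matroid unions do \emph{not} apply to \eqref{colored-system}, and instead proves the representation statement (\theoref{cone-direction-network}, via \propref{cone-collapse}) by a direct geometric argument: decompose a cone-$(2,2)$ graph into two cone-$(1,1)$ graphs (\propref{cone-22-decomp}), use the gadget of \lemref{cone11-directions} to force each base vertex onto a prescribed line through the origin, and then follow directed cycles in the overlap graph to compose projections $T(\vec v,\vec w,R)$ whose product of scale factors is generically $\neq 1$ (\propref{cone-genericity}), forcing total collapse. None of this content — the genuinely new technique of the paper — appears in your proposal, so the sufficiency direction is unproven as written.

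There is a second, subtler gap in your transfer step. From ``$\rk M = 2n-1$'' you conclude that the kernel is ``exactly the dilations'' and obtain a configuration $\vec p^*$ ``realizing the prescribed directions.'' But a one-dimensional kernel only gives a line of solutions; it does not follow that a spanning kernel vector is \emph{faithful}. If the kernel solution collapses some edge, that edge does not realize its prescribed direction, the corresponding row of \eqref{cone-inf-rig} at $\vec p^*$ is zero, and the isomorphism between flexes and direction-network solutions breaks. So you need, in addition to the rank statement, the other half of \theoref{cone-direction-network}: that generic directions on a cone-Laman graph admit a faithful realization, which the paper obtains by adapting the construction of \cite[Section 15.3]{MT10}. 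This existence statement requires its own argument (one must exhibit at least one direction assignment with a faithful realization before genericity can take over), and your proposal does not address it.
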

\begin{theorem}[\reflectionlaman]\theolab{reflection-laman}
A generic reflection framework is minimally rigid if and only if its associated colored
graph is reflection-Laman.
\end{theorem}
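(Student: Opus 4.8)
The plan is to prove \theoref{reflection-laman} by adapting the \emph{direction network method} that is the standard engine for results of this type (as in the periodic case \cite{MT10}), with the key new ingredient being a way to handle the orientation-reversing generator of the reflection group. The overall strategy has two halves that are proved somewhat independently: a \emph{rigidity} (sufficiency) direction, showing that a reflection-Laman colored graph has a generic rigid realization, and a \emph{genericity/upper-bound} (necessity) direction, showing that rigidity forces the reflection-Laman count on the quotient colored graph. Since minimal rigidity is the conjunction of rigidity and the matroidal sparsity count, I expect the combinatorial count itself to follow from a rank computation for the rigidity matrix of the symmetric framework, so the bulk of the work is a \emph{linear representation} statement: the reflection-Laman sparsity matroid is exactly the row matroid of a symmetrized rigidity matrix at a generic symmetric point.

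First I would set up the symmetric rigidity matrix. Differentiating the constraints \eqref{lengths-1} along a symmetry-preserving motion, and using \eqref{lengths-2} to eliminate the dependent vertex orbits, collapses the $\tilde G$-rigidity matrix down to a matrix indexed by the \emph{orbits} of vertices and edges, i.e.\ by $V(G)$ and $E(G)$, whose entries are built from the edge directions $\vec p_j-\gamma_{ij}\cdot\vec p_i$ and the group elements $\gamma_{ij}\in\Z/2\Z$ acting as $\Id$ or the reflection matrix. The trivial motions are exactly $\Cent(\Gamma)$, which is one-dimensional, so minimal rigidity amounts to this orbit matrix having rank $2|V(G)|-1$ (the ambient orbit-space dimension minus the one trivial direction). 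Thus the target is: for the reflection group, a colored graph is independent in the orbit-rigidity matroid if and only if it is reflection-Laman, and it is minimally rigid if and only if it is a maximal such independent graph meeting the count with equality.

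The heart of the argument is the \emph{direction network} reduction. The plan is to replace the quadratic length constraints by the linearized problem of realizing the colored graph so that each edge orbit points in a prescribed generic direction, subject to the reflection symmetry; a realization of the direction network with the correct (maximal) rank certifies that a generic \emph{length} assignment is rigid, via the usual averaging/perturbation argument that a generic point in the realization space of a direction-independent graph is infinitesimally rigid. Concretely I would (i) show that reflection-Laman graphs are exactly those for which the symmetric direction network has only the expected trivial kernel for generic directions, by a Laman-style decomposition of the edge set into spanning structures dictated by the $(2,2)$-type sparsity counts and the homomorphism $\rho$; and (ii) push this through the direction-to-length dictionary to conclude infinitesimal, hence generic, rigidity. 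The necessity direction runs the rank bound backwards: a graph that is not reflection-Laman either fails the sparsity count, forcing a dependency (hence non-minimality), or violates the relevant submodular/cone condition, forcing a nontrivial flex exhibited by an explicit direction-network kernel element.

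The main obstacle, and the genuinely new part relative to the rotation case, is the reflection generator. For rotations the relevant group elements act by \emph{orientation-preserving} matrices that commute with the complex structure, so one can linearize over $\mathbb{C}$ and the direction network decouples cleanly; a reflection does not commute with rotation by a generic direction, which breaks the naive complex-linear representation. I expect to spend the most effort building a linear representation of the \refTT{} sparsity matroid that correctly encodes the orientation-reversing color: the reflection must be incorporated so that the symmetrized direction constraints remain generically independent exactly when the count holds, and so that the one-dimensional centralizer is the only guaranteed kernel vector. Once this representation is in hand, verifying that its row matroid coincides with the reflection-Laman matroid (an algebraic-combinatorial rank computation, proved genericity-by-genericity) and that achievable directions come from actual length-framework realizations should proceed along the now-standard lines, with the reflection bookkeeping threaded through each step.
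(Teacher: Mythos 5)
Your proposal correctly isolates the hard point (the reflection does not commute with $R_{\pi/2}$, so the complex-linear/rank-transfer trick from the orientation-preserving case breaks), but the fix you propose is the one the paper shows \emph{cannot} work, and your step (i) is false as stated. Two concrete problems. First, the equivalence you want in (i) fails: the generic rank of the symmetrized direction-network system \eqref{colored-system} is governed by the \emph{reflection-$(2,2)$} counts, not the reflection-Laman counts. A reflection-$(2,2)$ graph containing a $(2,2)$-block with trivial $\rho$-image (e.g.\ a $K_4$ all of whose colors are $0$) is \emph{not} reflection-Laman and its frameworks carry a rigidity dependency by the classical Maxwell--Laman theorem, yet by \propref{reflection-22-collapse} its generic direction network still has full rank $2n-1$ with only the one-dimensional collapsed kernel. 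So ``only the expected trivial kernel for generic directions'' does not characterize reflection-Laman graphs, and a row-matroid identification of the form you describe cannot hold for this linear representation. Second, and more fundamentally, the ``direction-to-length dictionary'' you invoke in (ii) requires a \emph{faithful} realization of the direction network: by \propref{rigidity-vs-directions}, the rank of the rigidity system at $\vec p$ equals the rank of \eqref{colored-system} for the turned directions $\vec d^{\perp}$, where $\vec d_{ij}=\gamma_{ij}\cdot\vec p_j-\vec p_i$ are the actual edge vectors. But on a reflection-Laman graph (which is reflection-$(2,2)$), \propref{reflection-22-collapse} shows every \emph{generic} direction assignment has only collapsed realizations; hence directions arising from faithful point sets are \emph{always non-generic}, and generic full rank of the direction network certifies nothing about any actual framework. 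Unlike the cone case (\propref{same-rank}), the ranks of $(G,\bgamma,\vec d)$ and $(G,\bgamma,\vec d^{\perp})$ genuinely differ here, so ``generically independent rows'' and ``faithfully realizable'' pull in opposite directions and must be arranged simultaneously at a single non-generic $\vec d$.

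This is exactly what the paper's notion of a \emph{special pair} accomplishes, and it is the missing idea in your outline: one must exhibit directions $\vec d$ such that $(\tilde{G},\varphi,\vec d)$ has a faithful realization (unique up to scale and vertical translation) \emph{while} $(\tilde{G},\varphi,\vec d^{\perp})$ has only collapsed realizations, and \theoref{reflection-direction-network} shows such pairs exist precisely for reflection-Laman graphs. Producing them is where the real work lies: one deletes a suitably chosen edge from each Ross-circuit so that generic directions on the remaining Ross graph do give strongly faithful realizations (\propref{ross-realizations}), lets the realization induce the direction on the deleted edge, and runs a perturbation argument (\propref{ross-circuit-pairs}); the geometric gadget making this possible is that horizontal directions are invariant under the reflection, so the ``two crossing lines'' degenerate to one. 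Globally one also needs the Ross-circuit decomposition and the contracted reduced graph to handle all of $(G,\bgamma)$. None of this is recoverable from a single generic linear representation plus the standard averaging argument, so as written your plan has a genuine gap at the transfer step rather than a routine bookkeeping burden.
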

Genericity has its standard meaning from algebraic geometry: the set of non-generic
realizations $G(\vec p)$ is a proper algebraic subset of the potential choices for
$\vec p$.  Whether a graph is cone-Laman and reflection-Laman can be checked by efficient,
combinatorial algorithms \cite{BHMT11}, making Theorems \ref{theo:cone-laman} and
\ref{theo:reflection-laman} good characterizations as well as entirely analogous to
the Maxwell-Laman Theorem \cite{L70,M64}, which gives the combinatorial characterization for
generic bar-joint frameworks in the plane.

\paragraph*{Remark}
The approach to genericity is the one from \cite{ST10}. We choose
it because it is more computationally
effective than ones based on algebraic independence, and
also less technical.
Since genericity is a Zariski-open condition, standard results imply that:
the non-generic set of $\vec p$ has measure zero; for a generic $\vec p$,
there is an open neighborhood $U\ni \vec p$ that contains only generic points;
every generic $\vec p$ is a regular point of the rigidity map that sends $\vec p$
to the edge lengths in $G(\vec p)$ (or, indeed, any other polynomial map);
the rank of the linear system for infinitesimal motions (described below)
is maximal over all choices of $\vec p$.
Some of these are sometimes taken to be defining properties of the generic set in
the literature. The relevant algebraic geometry facts are
collected in \cite[Appendix A]{KTTU12}.

\subsection{Infinitesimal rigidity and direction networks}
As in all known proofs of ``Maxwell-Laman-type'' theorems such as Theorems
\ref{theo:cone-laman} and \ref{theo:reflection-laman},
we give a combinatorial characterization of a linearization of the problem known as
\emph{generic infinitesimal rigidity}.  Given a realization $\tilde{G}(\vec p)$, a
set of vectors $\vec v = \left(\vec v_i\right)_{i\in V(\tilde{G})}$ is an
\emph{infinitesimal motion} of $\tilde{G}(\vec p)$ if it satisfies
\begin{eqnarray}
\eqlab{infmot-1}
\iprod{\vec p_j - \vec p_i}{\vec v_j - \vec v_i} = 0 & \qquad \text{for all edges $ij\in E(\tilde{G})$} \\
\eqlab{infmot-2}
\vec v_{\varphi(\gamma) \cdot i} = \gamma \cdot \vec v_i & \qquad \text{for all $\gamma\in \Z/k\Z$}
\end{eqnarray}
The system \eqref{infmot-1}--\eqref{infmot-2} arises by computing the formal differential of
\eqref{lengths-1}--\eqref{lengths-2}.  Geometrically, infinitesimal motions preserve the edge
lengths to first order and preserve the symmetry of $\tilde{G}(\vec p)$. A cone or reflection
framework is \emph{infinitesimally rigid} if the space of infinitesimal motions is
precisely that induced by the trivial motions, i.e. $\Cent(\Gamma)$.
Equivalently,the framework is infinitesimally rigid if and only if the space of infinitesimal motions is
$1$-dimensional. As discussed  above, the rank of the system \eqref{infmot-1}--\eqref{infmot-2}, which
defines a rigidity matrix, is maximal at any generic point, so infinitesimal rigidity and rigidity
coincide generically.
In \secref{genericity2} we give a description of the non-generic set for cone frameworks; the case of
reflection frameworks is analogous.

To characterize infinitesimal rigidity, we use a
\emph{direction network} method (cf. \cite{W88,ST10,MT10}).  A
\emph{$\Gamma$-direction network} $(\tilde{G},\varphi,\vec d)$ is a
graph $\tilde{G}$ equipped with a free action $\varphi: \Gamma \to \Aut(G)$ and a symmetric assignment of directions with one direction $\vec d_{ij}$ per edge. The
\emph{realization space} of a direction network is the set of solutions $\tilde{G}(\vec p)$ to the system of
equations:
\begin{eqnarray}
\eqlab{dn-realization1}
\iprod{\vec p_j - \vec p_i}{\vec d^{\perp}_{ij}} = 0 & \qquad \text{for all edges $ij\in E(\tilde{G})$} \\
\eqlab{dn-realization2}
\vec p_{\varphi(\gamma)\cdot i} = \gamma \cdot\vec p_i & \qquad
\text{for all $\gamma\in \Z/k\Z$ and $i\in V(\tilde{G})$}
\end{eqnarray}
where, as above, $\Gamma$ is a finite group of rotations about the origin or the $2$-element group generated by
a reflection. We call such a direction network a \emph{cone direction network}
in the former case and a \emph{reflection direction network} in the latter case.
By \eqref{dn-realization2}, a  $\Gamma$-direction network is determined by assigning a direction to each
edge of the colored quotient graph $(G,\bgamma)$ of $(\tilde{G},\varphi)$ (cf. \cite[Lemma 17.2]{MT11}).
A realization of a $\Gamma$-direction network is \emph{faithful} if none of the edges of its graph
have coincident endpoints and \emph{collapsed} if all the endpoints coincide.

A basic fact in the theory of finite planar frameworks \cite{W88,ST10,DMR07}
is that, if a direction network has faithful realizations, the dimension of the realization
space is equal to that of the space of infinitesimal motions of a generic framework with the same
underlying graph.  This is also true for $\Gamma$-frameworks, if the symmetry group
contains only orientation-preserving elements, as in \cite{MT10}, or the finite order rotations
considered here. Thus, a characterization of generic cone direction networks with a $1$-dimensional space of
faithful realizations implies a characterization of generic minimal rigidity by a straightforward
sequence of steps.  We will show:
\begin{theorem}[\conedirectionnetwork]\theolab{cone-direction-network}
A generic cone direction network has a
faithful realization that is unique up to scaling
if and only if its associated colored graph is cone-Laman.
\end{theorem}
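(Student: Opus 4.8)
The plan is to prove \theoref{cone-direction-network} by counting dimensions of the realization space (defined by \eqref{dn-realization1}--\eqref{dn-realization2}) and then showing that the cone-Laman condition is exactly what forces faithful (non-collapsed) solutions to exist and be essentially unique. First I would set up the linear system on the colored quotient graph $(G,\bgamma)$: since \eqref{dn-realization2} lets us recover all of $\vec p$ from the values on a fundamental domain, I can rewrite \eqref{dn-realization1} as a linear system indexed by the edges of $G$, with variables the $\vec p_i$ for $i\in V(G)$, where an edge $ij$ with color $\gamma_{ij}$ contributes the equation $\iprod{\vec p_j - \rho(\gamma_{ij})^{-1}\vec p_i}{\vec d_{ij}^\perp}=0$ (the rotation appearing because of how $\varphi$ acts). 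The realization space is the kernel of this $|E(G)|\times 2|V(G)|$ matrix. A faithful realization unique up to scaling means this kernel is exactly $1$-dimensional and the generic point of the kernel has all edge-vectors nonzero; the scaling freedom is the global dilation about the origin, which is always a solution.

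The key combinatorial input is the representation of the sparsity matroid developed earlier in the paper: cone-Laman graphs are defined (in \secref{matroid}) by a count of the form $|E|\le 2|V|-1$ with the appropriate sparsity on subgraphs, where the ``$-1$'' reflects that the only trivial solution is the $1$-parameter family of dilations. I would argue that for a \emph{generic} choice of directions $\vec d$, the rank of the direction-network matrix equals the matroidal rank given by the cone-Laman count, so that the kernel has dimension $2|V(G)| - \rk$; the matroid representation result lets me conclude that genericity of the directions achieves this maximal rank. Thus when $(G,\bgamma)$ is cone-Laman (i.e. it is independent and has exactly $2|V|-1$ edges, the minimal case), the kernel is $1$-dimensional, and conversely a $1$-dimensional solution space forces the count.

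The harder direction is ruling out \emph{collapsed} or otherwise degenerate solutions: a generic $\vec p$ in a $1$-dimensional kernel could in principle be collapsed, which would make the unique solution unfaithful. To handle this I would show that for generic directions no proper nonempty $\Gamma$-invariant subset of vertices can collapse to a single orbit while the rest stay spread out, by examining the induced subsystem and using the sparsity counts again: a collapsed subgraph would have to satisfy its own overdetermined system, contradicting genericity of $\vec d$ on that subgraph. Equivalently, I would argue that the set of directions admitting a collapsed realization is a proper algebraic subset, hence avoided generically. The main obstacle I anticipate is precisely this faithfulness/non-degeneracy argument: establishing the dimension count is essentially linear algebra plus the matroid representation, but proving that the unique (up to scale) solution is genuinely faithful—and not secretly collapsed on a subframework—requires a careful inductive or variety-theoretic argument that pins down exactly how the cone-Laman sparsity precludes partial collapses. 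I expect this to be where the genericity hypothesis does the real work and where the proof's technical core lies.
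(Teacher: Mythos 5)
There is a genuine gap at the center of your plan: the ``matroid representation result'' you invoke to conclude that generic directions achieve the matroidal rank of the cone-Laman count does not exist off the shelf --- it is precisely the main technical content the paper has to prove. The rows of the colored realization system \eqref{colored-system} are not free generic vectors in $\R^{2|V(G)|}$: the row of an edge $ij$ couples both coordinates of $\vec p_i$ and of $\vec p_j$ through a \emph{single} direction $\vec d_{ij}$ and the rotation $R_k^{\gamma_{ij}}$, so the matrix ranges over a proper subvariety of all $m\times 2n$ matrices, and the standard linear-representation results for matroid unions (the Edmonds--Rota/Brylawski constructions) do not apply; the paper says this explicitly when introducing \propref{cone-collapse}. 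The required rank statement is instead proved by a direct geometric argument occupying most of Section \secrefX{cone-direction-network}: decompose a cone-$(2,2)$ graph into two cone-$(1,1)$ graphs (\propref{cone-22-decomp}), assign the special directions of \lemref{cone11-directions} so that each base vertex is pinned to a line through the origin and every other vertex to a translate of a fixed line, then compose the projections $T(\vec v,\vec w,R)$ around a directed cycle of the overlap graph (\propref{overlap-cycle}) and use \propref{cone-genericity} (generic composed scale factor $\neq 1$) to force the base orbits, and hence all vertices, to the origin. Your plan contains no substitute for this, and your fallback --- that a collapsed subconfiguration would ``satisfy its own overdetermined system, contradicting genericity of $\vec d$'' --- is circular: knowing that the subsystem on a subgraph is overdetermined at generic directions is exactly the subgraph rank statement you have not established.

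You also have the difficulty inverted. Once \propref{cone-collapse} is in hand, faithfulness is the comparatively routine part: for a cone-Laman graph, doubling any edge yields a cone-$(2,2)$ graph (\propref{cone-laman-doubling}), and if some edge were collapsed in every realization, adding a parallel copy with an arbitrary direction would not cut the dimension of the solution space, contradicting the collapse statement for the doubled graph; this is the ``small modification'' of \cite[Section 15.3]{MT10} that the paper appeals to, not a separate variety-theoretic induction. Your Maxwell direction is fine in outline and matches the paper: fewer than $2n-1$ edges gives a realization space of dimension at least $2$, and otherwise one finds a cone-Laman circuit that is either cone-$(2,2)$ (handled by \propref{cone-collapse}) or a $(2,2)$-block with trivial $\rho$-image, where \lemref{uncolored-subgraph} reduces the system to an ordinary uncolored direction network and the parallel redrawing theorem applies.
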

\noindent From this, \theoref{cone-laman} follows using a slight modification of the arguments in
\cite[Section 17-18]{MT10}, which we present quickly in \secref{cone-laman-proof} where we
will highlight where the proof breaks down for the reflection case. The main novelty in the
proof of \theoref{cone-direction-network} is that we make a direct geometric argument for the
key \propref{cone-collapse}, since standard results on linear representations of matroid unions
don't apply to the system \eqref{dn-realization1}--\eqref{dn-realization2}.

The situation for reflection direction networks is more complicated.
The reasoning used to transition from direction networks to
infinitesimal rigidity in the orientation-preserving case does \emph{not} apply verbatim in the
presence of reflections.  Thus, we will need to rely on a more technical analogue of
\theoref{cone-direction-network}, which we state after giving an important definition.

Let $(\tilde{G},\varphi,\vec d)$ be a direction network and define $(\tilde{G},\varphi,\vec d^{\perp})$
to be the direction network with $(\vec d^\perp)_{ij} = (\vec d_{ij}^\perp)$.  These two direction
networks form a \emph{special pair} if:
\begin{itemize}
\item $(\tilde{G},\varphi,\vec d)$ has a, unique up to scale and (vertical) translation, faithful realization.
\item $(\tilde{G},\varphi,\vec d^\perp)$ has only collapsed realizations.
\end{itemize}
\begin{theorem}[\linkeddirectionnetworks]\theolab{reflection-direction-network}
Let $(G,\bgamma)$ be a colored graph with $n$ vertices, $2n-1$ edges, and lift $(\tilde{G},\varphi)$.
Then there are directions $\vec d$ such that the direction networks $(\tilde{G},\varphi,\vec d)$
and $(\tilde{G},\varphi,\vec d^\perp)$ are a special pair if and only if $(G,\bgamma)$ is reflection-Laman.
\end{theorem}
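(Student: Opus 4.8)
The plan is to prove the two directions separately, with the reflection-Laman combinatorial condition (a counting/sparsity condition on the colored graph with $2n-1$ edges) as the pivot. The key structural observation is that the two direction networks $(\tilde{G},\varphi,\vec d)$ and $(\tilde{G},\varphi,\vec d^\perp)$ are governed by the same underlying incidence data but with perpendicular directions, so their realization systems \eqref{dn-realization1}--\eqref{dn-realization2} are linked: a solution $\vec p$ for $\vec d$ forces the companion system for $\vec d^\perp$ to have its solution space behave complementarily. I will track the ranks of both linear systems simultaneously as functions of the chosen directions $\vec d$, treating the entries of $\vec d$ as parameters and arguing that the generic behavior of these ranks is controlled entirely by the combinatorics of $(G,\bgamma)$.

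\textbf{The forward direction (special pair $\Rightarrow$ reflection-Laman).} First I would assume $\vec d$ realizes a special pair and extract the combinatorial consequences. The condition that $(\tilde{G},\varphi,\vec d)$ has a faithful realization unique up to scale and vertical translation pins down the dimension of its realization space to exactly $2$ (one dimension for scaling, one for vertical translation in the reflection-symmetric setting), which by the rank-nullity relationship on the $(2n-1)$-edge system forces the direction-network matrix to have corank exactly $2$, hence the maximal possible rank. The complementary condition that $(\tilde{G},\varphi,\vec d^\perp)$ collapses says the perpendicular system has only the trivial (all-coincident) solution, again a full-rank statement. I would then show that these two maximal-rank conditions, holding simultaneously for some $\vec d$, cannot occur unless the colored graph satisfies the reflection-Laman sparsity counts on every subgraph: a subgraph violating the count would produce either an unwanted flex in one network or an obstruction to collapse in the other, analogously to how independence in the sparsity matroid corresponds to generic full rank.

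\textbf{The reverse direction (reflection-Laman $\Rightarrow$ special pair).} This is the substantive half and, I expect, the main obstacle. Here I must construct explicit directions $\vec d$ realizing the special pair, and the difficulty is precisely that the reflection case does not reduce to a clean matroid-union linear representation — the perpendicular coupling between the two systems means I cannot independently certify full rank of each by a generic-assignment argument. My plan is to mirror the direct geometric approach flagged in the excerpt for \propref{cone-collapse}: build the directions inductively using the reflection-Laman decomposition of $(G,\bgamma)$ (a Henneberg-type or tree-partition construction implied by the sparsity condition), and at each step verify that the partial assignment keeps the realization space of the $\vec d$-network at dimension $2$ while driving the $\vec d^\perp$-network toward full collapse. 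The crux is ensuring that a single choice of $\vec d$ makes the $\vec d$-system one degree of freedom short of rigid while its perpendicular twin is simultaneously fully rigid; I would isolate this as a determinant/genericity lemma, showing the set of bad $\vec d$ (where either the faithful-realization or the collapse condition fails) is a proper algebraic subset, so that a generic reflection-Laman choice of directions works. Verifying faithfulness — that no edge has coincident endpoints in the constructed realization — is the final technical check, handled by arguing that the coincidence locus is again a proper subvariety avoided generically.
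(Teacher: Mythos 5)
Your plan for the hard direction (reflection-Laman $\Rightarrow$ special pair) ends in a step that cannot work in this setting. You propose to conclude by "showing the set of bad $\vec d$ \ldots is a proper algebraic subset, so that a generic \ldots choice of directions works." But every reflection-Laman graph is a reflection-$(2,2)$ graph, and \propref{reflection-22-collapse} says that a \emph{generic} direction network on a reflection-$(2,2)$ graph has only collapsed realizations. So on a reflection-Laman graph the set of $\vec d$ for which $(\tilde{G},\varphi,\vec d)$ has \emph{any} faithful realization is itself contained in a proper algebraic subset: faithful realizability is an anti-generic condition, which is exactly why the paper is forced to introduce special pairs in the first place (see the remark in \secref{refllamanproof}). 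No genericity lemma, however cleverly packaged, can produce the required $\vec d$. The paper's construction instead chooses directions generically only on a Ross-basis $(G',\bgamma)$, where \propref{ross-realizations} gives strongly faithful realizations, and then \emph{extends} to each remaining edge by the direction the realization itself induces (\propref{ross-circuit-pairs}), so the final $\vec d$ lies on a prescribed subvariety. Certifying that $\vec d^\perp$ nonetheless collapses everything then uses structure theory absent from your sketch: the Ross-circuits are vertex-disjoint (\propref{ross-circuit-decomp}), each is collapsed to a point on the axis by $\vec d^\perp$, and the reduced graph obtained by contracting them is reflection-$(2,2)$ (\propref{reduced-graph}), so \propref{reflection-22-collapse} applies to it. Even for a single Ross-circuit the argument is not inductive-generic: the paper uses an explicit degenerate assignment (tree edges in one direction, reflection-$(1,1)$ edges horizontal, exploiting that horizontal directions are preserved by the reflection) followed by a perturbation argument that keeps both ranks stable.

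Your forward direction also rests on a misreading of the special-pair condition. Uniqueness up to scale and vertical translation is asserted only among \emph{faithful} realizations; it does not force the realization space of $(\tilde{G},\varphi,\vec d)$ to have dimension exactly $2$, nor the matrix to have corank $2$. Indeed, the remark closing \secref{direction-network} states that the direction networks supplied by the theorem have at least one extra degree of freedom for every edge outside a Ross-basis, so the realization space is typically larger and the statement cannot be improved to a unique realization up to translation and scale. The paper's Maxwell direction instead argues circuit-by-circuit: a violation of $(2,1)$-sparsity forces a dependency in the system \eqref{colored-system} for $\vec d^\perp$, defeating the collapse condition; and for a Laman-circuit $G'$ with trivial $\rho$-image, the networks $(G',\bgamma,\vec d)$ and $(G',\bgamma,\vec d^\perp)$ have \emph{equal} rank (the subnetwork is equivalent to an uncolored one, where turning all directions by $\pi/2$ preserves rank), so collapse under $\vec d^\perp$ forces $G'$ to collapse under $\vec d$ as well, destroying faithfulness. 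This rank-equality mechanism on trivially-colored subgraphs --- which fails for the full reflection-colored graph and is the central subtlety of the theorem --- does not appear in your proposal.
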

Briefly, we will use \theoref{reflection-direction-network} as follows: the faithful realization of
$(\tilde{G},\varphi,\vec d)$ gives a symmetric immersion of the graph $\tilde{G}$ that can be interpreted
as a framework, and the fact that $(\tilde{G},\varphi,\vec d^\perp)$ has only collapsed realizations
will imply that the only symmetric infinitesimal motions of this framework correspond to translation
parallel to the reflection axis.

\subsection{Related work}
This paper is part of a sequence extending our results about periodic frameworks \cite{MT10},
and the results (and proofs) reported here have been previously circulated in other preprints:
Theorems \ref{theo:cone-laman} and \ref{theo:cone-direction-network}
in \cite{MT11}.  Theorems \ref{theo:reflection-laman} and
\ref{theo:reflection-direction-network} in \cite{MT12a}.

This paper deals with the setting of ``forced symmetry'', in which
all the motions considered preserve the $\Gamma$-symmetry of the framework.
Some directly related results are those of Jordán, Kaszanitzky, and Tanigawa \cite{JKT12},
who substantially generalize the results here to dihedral groups of order $2(2k + 1)$
using mainly inductive constructions. Tanigawa \cite{T12} proves results for
scene analysis and body-bar frameworks symmetric with respect to a large number of groups
using a more matroidal method.

Much of the interest in forced symmetry arises from the study of periodic
frameworks, which are symmetric with respect to a translation lattice.  These
appear in \cite{W88}, though recent interest should be traced to the
more general setup in \cite{BS10}.  A Maxwell-Laman-type theorem for periodic
frameworks in dimension $2$ appears in our paper \cite{MT10}. Periodic
frameworks admit a number of natural variants; in dimension $2$ there
are combinatorial rigidity characterizations for the fixed-lattice \cite{R12}
partially-fixed lattice with $1$-degree of freedom \cite{NR12} and fixed-area unit cell
\cite{MT14}. This paper emerged from a project to extend the rigidity characterizations
from \cite{MT10} to more symmetry groups; the sequel \cite{MT2014} has results for
the orientation-preserving plane groups.

In dimensions $d\ge 3$, combinatorial rigidity characterizations for even
finite bar-joint frameworks are not known.  The forced-symmetric case does not
appear to be much easier.  However, Maxwell-type counting heuristics have
been determined for a large number of space and point groups by Ross, Schulze and
Whiteley \cite{RSW11}.

All the combinatorial work mentioned so far approaches forced-symmetric frameworks
in a similar formalism.  Another approach, used in \cite{BS11,BST15} is to study
what, in our terminology, is the underlying graph of the colored graph. This leads
to a theory of a somewhat different flavor.

Another direction in the study of symmetric frameworks is to not force
the symmetry constraint.  This is the approach taken by Fowler and Guest \cite{FG99},
and a number of combinatorial characterizations are known \cite{Sch10,S10,ST14,ST13}.

\subsection{Notations and terminology}  In this paper, all graphs $G=(V,E)$ may be multi-graphs.  Typically,
the number of vertices, edges, and connected components are denoted by $n$, $m$, and $c$, respectively.
The notation for a colored graph is $(G,\bgamma)$, and a symmetric graph with a free $\Z/k\Z$-action
is denoted by $(\tilde{G},\varphi)$.
If  $(\tilde{G},\varphi)$ is the lift of $(G,\bgamma)$, we
denote the fiber over a vertex $i\in V(G)$ by $\tilde{i}_\gamma$, with $\gamma$ ranging over $\Z/k\Z$.
The fiber over a directed edge $ij \in E(G)$ with color $\gamma_{ij}$ consists of the edges
$\tilde{i}_\gamma \tilde{j}_{\gamma+\gamma_{ij}}$ for $\gamma$ ranging over $\Z/k\Z$.

We also use \emph{$(k,\ell)$-sparse graphs} \cite{LS08} and their generalizations.  For a graph $G$, a
\emph{$(k,\ell)$-basis} is a maximal $(k,\ell)$-sparse subgraph; a \emph{$(k,\ell)$-circuit} is an edge-wise
minimal subgraph that is not $(k,\ell)$-sparse; and a \emph{$(k,\ell)$-component} is a maximal subgraph
that has a spanning $(k,\ell)$-graph. (To simplify terminology, we follow a convention of
\cite{ST10} and refer to $(k, \ell)$-tight graphs simply as $(k, \ell)$-graphs.)

Points in $\R^2$ are denoted by $\vec p_i = (x_i,y_i)$, indexed sets of points by $\vec p = (\vec p_i)$,
and direction vectors by $\vec d$ and $\vec v$. For any vector $\vec v$, we denote its counter-clockwise
rotation by $\pi/2$ by $\vec v^\perp$. Realizations
of a cone or reflection direction network $(\tilde{G},\varphi,\vec d)$ are written as $\tilde{G}(\vec p)$, as are realizations
of abstract  $\Gamma$-frameworks.  Context will always make clear the type of realization under consideration.

\subsection{Acknowledgements}
LT is supported by the European Research Council under the European Union's Seventh Framework
Programme (FP7/2007-2013) / ERC grant agreement no 247029-SDModels. JM is supported by
the European Research Council under the European Union's Seventh Framework Programme
(FP7/2007-2013) / ERC grant agreement no 226135.
LT and JM have been supported by  NSF CDI-I grant DMR 0835586 to
Igor Rivin and M. M. J. Treacy.

\section{Cone- and Reflection-Laman graphs} \seclab{matroid}
In this section we define cone-Laman and reflection-Laman graphs and
develop the properties we need.  We start by recalling some general facts about
colored graphs and the associated map $\rho$.

\subsection{The map $\rho$ and equivalent colorings}  \seclab{rho}
Let $(G,\bgamma)$ be a $\Z/k\Z$-colored graph.  Since all the colored graphs in this paper have
$\Z/k\Z$ colors, from now on we make this assumption and write simply ``colored graph''.

The map $\rho : \HH_1(G, \Z)\to \Z/k\Z$ is defined on oriented cycles by adding up
the colors on the edges; if the cycle
traverses the edge in reverse, then the edge's color is added with a minus sign, and
otherwise it is added without a sign. As the notation suggests,
$\rho$ extends to a homomorphism from $\HH_1(G, \Z)$ to $\Z/k\Z$, and it is well-defined even if $G$ is not connected
\cite[Section 2]{MT10}.  The $\rho$-image of a colored graph is defined to be \emph{trivial} if it contains
only the identity.

We say $\bgamma$ and $\bm{\eta}$ are {\it equivalent colorings} of a graph $G$ if the corresponding lifts are isomorphic covers of $G$.
Equivalently, $\bgamma$ and $\bm{\eta}$ are equivalent if the induced representations $\HH_1(G, \Z) \to \Z/k\Z$ are identical.
Since all of the characteristics of rigidity or direction networks for a colored graph can be defined on the lift,
$(G, \bgamma)$ and $(G, \bm{\eta})$ have the same generic rigidity properties and the same generic rank for direction networks.
We record the following lemma which will simplify the exposition of some later proofs.

\begin{lemma} \lemlab{uncolored-subgraph}
Suppose $G'$ is a subgraph of $(G, \bgamma)$ such that $\rho(\HH_1(G', \Z))$ is trivial. Then, there is an
equivalent coloring $\bm{\eta}$ such that $\eta_{ij} = 0$ for all edges $ij$ in $G'$.
\end{lemma}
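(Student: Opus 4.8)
The plan is to realize the equivalence relation on colorings concretely as a \emph{switching} (coboundary) operation and then to build the required switching from a potential function defined along spanning trees of $G'$. By the characterization recorded just before the statement, two colorings of $G$ are equivalent precisely when they induce the same map $\rho$ on $\HH_1(G,\Z)$. So it suffices to exhibit a coloring $\bm{\eta}$ with $\eta_{ij}=0$ on every edge of $G'$ and with $\rho$ unchanged.

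First I would introduce the switching operation. Given any function $f : V(G)\to \Z/k\Z$, define a new coloring by $\eta_{ij} = \gamma_{ij} + f(j) - f(i)$ for each directed edge $ij$ with tail $i$ and head $j$. Summing along any oriented cycle, the contributions of $f$ telescope to zero, so $\rho$ computed from $\bm{\eta}$ agrees with $\rho$ computed from $\bgamma$ on all of $\HH_1(G,\Z)$; hence $\bm{\eta}$ is equivalent to $\bgamma$ for \emph{every} choice of $f$. It then remains only to choose $f$ so that the edges of $G'$ become uncolored. I would define $f$ one connected component of $G'$ at a time (and set $f=0$ on vertices outside $G'$, which is harmless since those do not lie on edges of $G'$). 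Fixing a component $C$, a spanning tree $T$ of $C$, and a base vertex $b$, I set $f(b)=0$ and propagate outward along $T$: at each tree edge I choose the value of $f$ at the newly reached endpoint so that the switching formula forces that edge's new color to $0$. This determines $f$ on all of $C$ and makes every tree edge uncolored by construction, leaving only the non-tree edges of $G'$ to handle.

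For a non-tree edge $uv$ of $C$, consider the cycle $c$ formed by $uv$ together with the tree path in $T$ between its endpoints. This cycle lies in $G'$, so the hypothesis that $\rho(\HH_1(G',\Z))$ is trivial gives $\rho(c)=0$, and since switching preserves $\rho$ this value is the same whether computed from $\bgamma$ or $\bm{\eta}$. Computing it from $\bm{\eta}$, the tree edges contribute $0$ by the previous step (regardless of their traversal signs), so the entire value is carried by $\eta_{uv}$; therefore $\eta_{uv}=0$. Thus all edges of $G'$ are uncolored, as required.

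The main obstacle is precisely this last step: it is here, and only here, that the hypothesis on $\rho(\HH_1(G',\Z))$ is genuinely used, and one must track the orientation conventions carefully so that the signed sum of tree-edge colors along the cycle really does vanish, isolating $\eta_{uv}$. The earlier steps—verifying that switching preserves $\rho$, and that the tree edges can be zeroed by a potential—are routine once the coboundary formula $\eta_{ij}=\gamma_{ij}+f(j)-f(i)$ is fixed.
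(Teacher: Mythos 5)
Your proof is correct, and it takes a somewhat different route from the paper's. The paper constructs $\bm{\eta}$ from scratch: it chooses a spanning forest $T'$ of $G'$, extends it to a spanning tree $T$ of all of $G$ (after reducing to $G$ connected), sets $\eta_{ij}=0$ on $T$, and colors each non-tree edge by $\pm\rho(C_{ij})$ of its fundamental cycle; equivalence then holds because the fundamental cycles generate $\HH_1(G,\Z)$ and the two colorings agree on them by construction. You instead never redefine the coloring wholesale: you act on the given $\bgamma$ by a switching $\eta_{ij}=\gamma_{ij}+f(j)-f(i)$, so that invariance of $\rho$ is automatic for \emph{every} potential $f$ (by telescoping), and only afterwards choose $f$ by tree propagation within each component of $G'$. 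The two arguments use the hypothesis $\rho(\HH_1(G',\Z))=0$ at the same spot—killing the non-tree edges of $G'$ via their fundamental cycles—but your version buys a few things: you avoid both the reduction to connected $G$ and the verification that a freshly built coloring induces the same representation, and you make explicit a point the paper leaves implicit, namely that the relevant fundamental cycle of a non-tree edge of $G'$ lies entirely in $G'$ (in the paper this rests on $T$ extending $T'$, so that the $T$-path between the endpoints of a $G'$-edge stays in $T'$; in your setup the tree lives inside $G'$ by construction). What the paper's version buys is a concrete normal form—an equivalent coloring vanishing on an entire spanning tree of $G$, with explicit values $\pm\rho(C_{ij})$ elsewhere—which is convenient when the lemma is invoked later. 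One could also note that your switching operation and the paper's construction are two faces of the same fact: on a connected graph, colorings induce the same $\rho$ exactly when they differ by a coboundary.
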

\begin{proof}
It suffices to construct colors $\eta_{ij}$ such that the induced representation $\HH_1(G, \Z) \to \Z/k\Z$ is equal
to $\rho$. There is no loss of generality in assuming that $G$ is connected.
Choose a spanning forest $T'$ of $G'$. Extend $T'$ to a spanning tree $T$ of $G$, and set $\eta_{ij} = 0$ for all $ij \in E(T)$.
Each edge $ij$ in $G - T$ creates a fundamental (oriented) cycle $C_{ij}$ with respect to $T$.
Define $\eta_{ij} = \pm \rho(C_{ij})$ with the sign being positive if the orientation of $ij$ and $C_{ij}$ agree and
negative otherwise. Since, by hypothesis, $\rho(C) = 0$ for all cycles in $G'$, all its edges are colored $0$
by $\bm{\eta}$.
\end{proof}

\subsection{Cone- and Reflection-Laman graphs}
Let $(G,\bgamma)$ be a colored graph with $n$ vertices and $m$ edges.  We define $(G,\bgamma)$ to be a
\emph{cone-Laman graph} or \emph{reflection-Laman graph}
if: $m=2n-1$, and for all subgraphs $G'$, spanning $n'$
vertices, $m'$ edges, $c'$ connected components with non-trivial $\rho$-image and $c'_0$ connected components
with trivial $\rho$-image
\begin{equation}\eqlab{cone-laman}
m'\le 2n' - c' - 3c'_0
\end{equation}
The underlying graph $G$ of a cone-Laman graph is easily seen to be a $(2,1)$-graph.
In the setting of reflection symmetry the only colored graphs arising are $\Z/2\Z$-colored; for such colorings,
the families of cone-Laman and reflection-Laman graphs, which are defined purely combinatorially, are
precisely the same. See \figref{examples} for some examples of various kinds of graphs described
here and below.

We call $(G, \bgamma)$ \emph{cone-laman-sparse}
(resp. \emph{reflection-laman-sparse}) if it satisfies \eqref{cone-laman} for all subgraphs.
Note that while \theoref{cone-laman} and \theoref{reflection-laman} imply these classes of graphs
are matroids, at this point we have not established this. Hence, we define $(G, \bgamma)$ to be a
\emph{cone-laman-circuit} (resp. \emph{reflection-laman-circuit}) if it is cone-laman-sparse
(resp. reflection-laman-sparse) after the removal of any edge.

\subsection{Ross graphs and circuits}
Another family we need is that of \emph{Ross graphs}\footnote{
This terminology is from \cite{BHMT11}.  Elissa Ross introduced this class in
\cite{R12}, and we introduced this terminology in light of her contribution.
In \cite{R12}, they are called ``constructive periodic orbit graphs''.
}.  These are colored graphs with $n$ vertices,
$m = 2n - 2$ edges, satisfying the sparsity counts
\begin{equation}\eqlab{ross}
m'\le 2n' - 2c' - 3c'_0
\end{equation}
using the same notations as in \eqref{cone-laman}.  In particular, Ross graphs $(G,\bgamma)$
have as their underlying graph, a $(2,2)$-graph $G$, and are thus connected \cite{LS08}.

A \emph{Ross-circuit}%
\footnote{The matroid of Ross graphs has more circuits, but these are the ones
we are interested in here.  See \secref{reflection-22}.} %
is a colored graph that becomes a Ross graph after removing \emph{any} edge.  The underlying
graph $G$ of a Ross-circuit $(G,\bgamma)$ is a $(2,2)$-circuit, and these are also known
to be connected \cite{LS08}, so, in particular, a Ross-circuit has $c'_0=0$ and $c' =1$, and
thus satisfies \eqref{cone-laman} on the whole graph.  Since \eqref{ross}
implies \eqref{cone-laman} and \eqref{ross} holds on proper subgraphs, we see that every Ross-circuit is reflection-Laman.
Because reflection-Laman graphs are $(2,1)$-graphs and subgraphs that are $(2,2)$-sparse
satisfy \eqref{ross}, we get the following structural result.
\begin{prop}[\xyzzy][{\cite[Lemma 11]{BHMT11}}]\proplab{ross-circuit-decomp}
Let $(G,\bgamma)$ be a reflection-Laman graph.  Then each $(2,2)$-component of $G$ contains at most one
Ross-circuit, and in particular, the Ross-circuits in $(G,\bgamma)$ are vertex disjoint.
\end{prop}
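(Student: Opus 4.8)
The plan is to prove the two assertions separately, using only the reflection-Laman count \eqref{cone-laman}, the Ross count \eqref{ross}, and the structural facts already recorded above: the underlying graph of a Ross-circuit is a connected $(2,2)$-circuit on $n'$ vertices with exactly $2n'-1$ edges, and every Ross-circuit has non-trivial $\rho$-image. I will also use repeatedly that $G$, being reflection-Laman, is a $(2,1)$-graph, so that every subgraph on $n''$ vertices carries at most $2n''-1$ edges.

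First I would prove the vertex-disjointness statement directly. Suppose two distinct Ross-circuits $R_1$ and $R_2$ share a vertex. Then $R_1\cup R_2$ is connected and, containing $R_1$, has non-trivial $\rho$-image, so \eqref{cone-laman} applied with $c'=1$, $c'_0=0$ gives $|E(R_1\cup R_2)|\le 2|V(R_1\cup R_2)|-1$. Writing $n_\cap$, $m_\cap$ for the vertex and edge counts of $R_1\cap R_2$ and substituting $|E(R_i)|=2|V(R_i)|-1$, inclusion--exclusion converts this into $m_\cap\ge 2n_\cap-1$. On the other hand, two distinct matroid circuits are incomparable, so $R_1\cap R_2$ is a proper edge-subgraph of the $(2,2)$-circuit $R_1$ and is therefore $(2,2)$-sparse, i.e.\ $m_\cap\le 2n_\cap-2$. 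This contradiction shows $R_1$ and $R_2$ are vertex-disjoint.

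Next I would bound the number of Ross-circuits in a single $(2,2)$-component $K=G[W]$. By definition $K$ contains a spanning $(2,2)$-tight subgraph, so its $(2,2)$-rank is exactly $2|W|-2$; since $G$ is $(2,1)$-sparse, $K$ has at most $2|W|-1$ edges, and hence $(2,2)$-nullity at most $1$. If $K$ contained two distinct Ross-circuits, they would be vertex-disjoint by the previous step, and over disjoint vertex sets the $(2,2)$-sparsity matroid is the direct sum of the two matroids, so their union would have rank $(2n_1-2)+(2n_2-2)$ and $(2n_1-1)+(2n_2-1)$ edges, i.e.\ nullity exactly $2$. As nullity is monotone under passing to subgraphs, $K$ would then have nullity at least $2$, contradicting the bound above.

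The hard part is the bookkeeping at the interface between the colored count \eqref{cone-laman} and the uncolored $(2,2)$-matroid. In the first step one must verify that $R_1\cup R_2$ genuinely falls under the $c'=1$, $c'_0=0$ case --- this is exactly where non-triviality of the $\rho$-image of a Ross-circuit is essential --- and correctly translate ``proper subset of a matroid circuit'' into the sparsity inequality $m_\cap\le 2n_\cap-2$; in the second step the crux is that $(2,1)$-sparsity of $G$ caps the $(2,2)$-nullity of each component at $1$, while a vertex-disjoint pair of circuits forces nullity $2$. The remaining manipulations are routine counting.
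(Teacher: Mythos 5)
Your argument is correct, but it is worth noting what you are comparing against: the paper does not prove \propref{ross-circuit-decomp} in-text at all --- it imports it from \cite[Lemma 11]{BHMT11}, with only the one-sentence justification that reflection-Laman graphs are $(2,1)$-graphs and their $(2,2)$-sparse subgraphs satisfy \eqref{ross}. The intended argument behind that citation is essentially your second step: a $(2,2)$-component on vertex set $W$ has $(2,2)$-rank exactly $2|W|-2$, while $(2,1)$-sparsity caps its edge count at $2|W|-1$, so its $(2,2)$-nullity is at most $1$ and it contains at most one $(2,2)$-circuit, hence at most one Ross-circuit; vertex-disjointness of the Ross-circuits is then read off from vertex-disjointness of the $(2,2)$-components themselves (Lee--Streinu structure theory, \cite{LS08}). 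You reverse this logical order, and that is the genuine difference: you prove disjointness \emph{first}, directly from the colored count \eqref{cone-laman} applied to $R_1\cup R_2$ (legitimately with $c'=1$, $c'_0=0$, since the union is connected and inherits a non-trivial $\rho$-image from $R_1$), combined with inclusion--exclusion and sparsity of proper subgraphs of a $(2,2)$-circuit. This buys you independence from the structure theorem for $(2,2)$-components --- your proof never needs to know that components are vertex-disjoint --- at the cost of a slightly longer count; note also that the matroid fact ``nullity $\le 1$ implies a unique circuit'' would let the per-component claim stand alone, whereas your version of step two genuinely consumes step one.

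Two small points to tighten. First, when you invoke $m_\cap\le 2n_\cap-2$, your $n_\cap$ is $|V(R_1)\cap V(R_2)|$, which may strictly exceed the number of vertices spanned by the shared edges (the circuits could share vertices but no edges). The inequality still holds --- the bound only improves as $n_\cap$ grows, and in the extreme case $E(R_1)\cap E(R_2)=\emptyset$, $n_\cap=1$ it reads $0\le 0$, still contradicting the derived $m_\cap\ge 2n_\cap-1=1$ --- but this is precisely where the shared-vertex hypothesis enters and deserves an explicit sentence. Second, incomparability of distinct Ross-circuits leans on Ross sparsity defining a matroid, which the paper only asserts in a footnote; you can sidestep this: if $E(R_1)\subsetneq E(R_2)$, then deleting from $R_2$ an edge outside $R_1$ yields a Ross graph containing $R_1$, yet $R_1$ is connected with non-trivial $\rho$-image and $2n_1-1>2n_1-2$ edges, violating \eqref{ross}.
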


\subsection{Cone-$(2,2)$ graphs}\seclab{cone-22}
A colored graph $(G,\bgamma)$ is defined to be a \emph{cone-$(2,2)$} graph,
if it has $n$ vertices, $m=2n$ edges, and satisfies the
sparsity counts
\begin{equation} \eqlab{cone22a}
m' \le 2n' - 2c'_0
\end{equation}
using the same notations as in \eqref{cone-laman}.
The link with cone-Laman graphs is the following straightforward proposition:
\begin{prop}\proplab{cone-laman-doubling}
A colored graph $(G,\bgamma)$ is cone-Laman if and only if, after adding a copy of any
colored edge, the resulting colored graph is a cone-$(2,2)$ graph. \eop
\end{prop}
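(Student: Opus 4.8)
The plan is to reason directly with the two sparsity bounds through their \emph{slacks}. For a colored graph $A$ on $n$ vertices and $m$ edges, write $k(A) = c + c_0$ for its number of connected components (split as usual into those with non-trivial and with trivial $\rho$-image) and set
\[
s_L(A) = 2n - c - 3c_0 - m, \qquad s_{22}(A) = 2n - 2c_0 - m,
\]
so that \eqref{cone-laman} reads $s_L(A')\ge 0$ for every subgraph $A'$ and \eqref{cone22a} reads $s_{22}(A')\ge 0$ for every subgraph $A'$. The single identity driving everything is
\[
s_{22}(A) = s_L(A) + k(A).
\]
Since $n$, $m$, $c$, and $c_0$ are all additive over connected components, $s_L$ is additive as well; hence the cone-Laman sparsity condition $s_L\ge 0$ on \emph{all} subgraphs is equivalent to $s_L\ge 0$ on all \emph{connected} subgraphs. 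I treat a subgraph as an edge set with $n'$ its number of spanned vertices, so a connected subgraph always carries at least one edge, and (since a single $0$-colored loop already violates \eqref{cone-laman}) a cone-Laman graph contains no $0$-colored loop. The edge counts match automatically: a cone-$(2,2)$ graph has $2n$ edges, so removing the added copy leaves $2n-1$ edges, exactly as a cone-Laman graph must have.

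For the ``only if'' direction, assume $(G,\bgamma)$ is cone-Laman and let $e'$ be the parallel copy of an edge $e$ added to form $G^{+e}$; I must check $s_{22}(H')\ge 0$ for every subgraph $H'\subseteq G^{+e}$. If $H'$ avoids $e'$ then $H'\subseteq G$ and $s_{22}(H') = s_L(H') + k(H') \ge 0$ at once. If $H'$ contains $e'$, set $G' = H'-e' \subseteq G$; then $H'$ arises from $G'$ by inserting one edge, and I compute $s_{22}(H') = s_{22}(G') + 2\Delta n - 2\Delta c_0 - 1$, where $\Delta n\in\{0,1,2\}$ counts newly spanned endpoints and $\Delta c_0\in\{-1,0,+1\}$ records the change in the number of trivial components. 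Combining this with $s_{22}(G') = s_L(G') + k(G') \ge k(G')$ reduces the claim to the elementary inequality $2\Delta n - 2\Delta c_0 \ge 1 - k(G')$, which I verify by running through how $e'$ can attach (fresh isolated edge, pendant edge, chord inside one component, or bridge merging two components, plus the loop subcases).

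For the ``if'' direction, assume $G^{+e}$ is cone-$(2,2)$ for every edge $e$; by the additivity reduction it suffices to prove $s_L(G')\ge 0$ for a connected subgraph $G'$. Choose any edge $e\in E(G')$ and add its parallel copy $e'$: this changes none of $n'$, $c'$, $c'_0$ and raises $m'$ by one, so it lowers the slack by exactly one, $s_{22}(G'+e') = s_{22}(G') - 1$. Since $G'+e'\subseteq G^{+e}$ is cone-$(2,2)$-sparse we have $s_{22}(G'+e')\ge 0$, whence $s_{22}(G')\ge 1 = k(G')$, and therefore $s_L(G') = s_{22}(G') - k(G') \ge 0$, as required.

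The main obstacle is the bookkeeping in the ``only if'' direction: tracking how a single edge insertion simultaneously affects the spanned-vertex count and the trivial/non-trivial component count, especially when $e'$ is a loop. The inequality $2\Delta n - 2\Delta c_0 \ge 1 - k(G')$ holds in each genuine case, and the only configuration that would break it --- a $0$-colored loop planted on a fresh vertex, where $2\Delta n - 2\Delta c_0 = 0$ while $k(G')$ could be $0$ --- is ruled out precisely because $e'$ is a copy of an edge of the cone-Laman graph $G$, which has no $0$-colored loop. Once the identity $s_{22}=s_L+k$ and the additivity of $s_L$ are in place, everything else is a short, uniform check.
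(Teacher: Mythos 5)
Your proposal is correct. Note, though, that the paper offers \emph{no} argument here: \propref{cone-laman-doubling} is stated with a tombstone as a ``straightforward proposition,'' so there is no official proof to compare against; your write-up is a complete verification of exactly the kind the authors left implicit. The engine of your argument --- the identity $s_{22}=s_L+(c'+c'_0)$ relating the slacks of \eqref{cone-laman} and \eqref{cone22a}, plus additivity of $s_L$ over connected components --- is the natural bookkeeping, and your case analysis is sound: I checked the insertion cases (fresh edge, pendant, chord, bridge, and the loop subcases, using that a $\rho$-image can only grow under edge insertion, so $\Delta c_0\ge -1$ always), and the inequality $2\Delta n-2\Delta c_0\ge 1-k(G')$ holds in each, with the $0$-colored loop on a fresh vertex correctly identified as the unique bad configuration and correctly excluded because a cone-Laman graph has no $0$-colored loop. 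You also implicitly use the right reading of ``any'' (universal over edges of $G$): the existential reading would be false, since a trivial-$\rho$-image $(2,2)$-block is cone-$(2,2)$-sparse but not cone-Laman-sparse, and your ``if'' direction correctly invokes the hypothesis at an edge \emph{inside} each connected subgraph $G'$. One sentence worth adding for completeness: in the ``if'' direction, the assertion that doubling $e$ changes none of $n'$, $c'$, $c'_0$ needs the observation that the new $2$-cycle formed by $e$ and its parallel copy has $\rho$-image $\gamma_e-\gamma_e=0$, so the component's $\rho$-image (hence its triviality status) is unchanged --- this is where it matters that the added edge is a \emph{copy}, carrying the same color.
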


\subsection{Reflection-$(2,2)$ graphs}\seclab{reflection-22}
A colored graph $(G,\bgamma)$ is defined to be
a \emph{reflection-$(2,2)$} graph, if it has $n$ vertices, $m=2n-1$ edges, and satisfies the
sparsity counts
\begin{equation} \eqlab{ref22a}
m' \le 2n' - c' - 2c'_0
\end{equation}
using the same notations as in \eqref{cone-laman}.
The relationship between Ross graphs and reflection-$(2,2)$ graphs we will need is:
\begin{prop} \proplab{ross-adding}
Let $(G,\bgamma)$ be a Ross-graph.  Then for either
\begin{itemize}
\item an edge $ij$ with any color where $i \neq j$
\item or a self-loop $\ell$ at any vertex $i$ colored by $1$
\end{itemize}
the graph $(G+ij,\bgamma)$ or $(G+\ell,\bgamma)$ is reflection-$(2,2)$.
\end{prop}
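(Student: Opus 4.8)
The plan is to deduce the reflection-$(2,2)$ inequality \eqref{ref22a} for every subgraph of $G+ij$ (resp.\ $G+\ell$) from the Ross bound \eqref{ross}, which holds for every subgraph of the Ross graph $G$ by hypothesis. The edge count demanded by the definition is automatic: a Ross graph has $2n-2$ edges, and since both the edge $ij$ (with $i,j$ already vertices of $G$) and the self-loop $\ell$ leave $n$ unchanged, the enlarged graph has exactly $2n-1$ edges. It is worth recording at the outset that, subtracting \eqref{ross} from \eqref{ref22a}, the reflection-$(2,2)$ bound exceeds the Ross bound by precisely the total number of connected components $c'+c'_0$; thus \eqref{ref22a} is a one-unit-per-component relaxation of \eqref{ross}, and this slack is what must absorb the extra edge.

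First I would dispose of the subgraphs $G'$ that avoid the new edge $e$: such a $G'$ is a subgraph of $G$, so it satisfies \eqref{ross}, which is strictly stronger than \eqref{ref22a}. The real work is for a subgraph $G'$ containing $e$. Here I set $G''=G'-e$, a subgraph of $G$, apply \eqref{ross} to $G''$, and use $m'=m''+1$. Writing $c'',c''_0$ for the nontrivial and trivial component counts of $G''$ and $t\in\{0,1,2\}$ for the number of endpoints of $e$ that are isolated in $G''$ (so $n'=n''+t$), the target bound \eqref{ref22a} for $G'$ reduces, after substituting \eqref{ross}, to the purely combinatorial inequality $c'+2c'_0+1\le 2t+2c''+3c''_0$, which I would verify case by case.

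The cases are organized by how $e$ acts on the components. If $e=ij$ joins two distinct components, it is a bridge: it creates no cycle, merges them (dropping the component count by one), and the merged component is $\rho$-trivial exactly when both halves were, so a trivial component is lost whenever one is absorbed. If $e=ij$ has both endpoints in one component, it closes a cycle and can only convert a $\rho$-trivial component to $\rho$-nontrivial (never the reverse). If $e=\ell$ is the self-loop colored $1$, connectivity is unchanged, but since the loop contributes $1\neq 0$ to $\rho$ it always renders the component at its base vertex $\rho$-nontrivial. In each situation the induced change in $(c',c'_0)$ relative to $(c'',c''_0)$ reduces the displayed inequality to one of the forms $1\le c''+c''_0$ (when a single existing component of $G''$ is involved) or $0\le c''+c''_0$ (otherwise); these hold because component counts are nonnegative and, in the first form, the relevant component of $G''$ is genuinely present.

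The one genuine obstacle is the boundary bookkeeping of isolated vertices. Removing $e$ can strip its endpoints of all other incidences, so $G''$ may span fewer vertices than $G'$ and, in the extreme case $G'=\{e\}$, have no edges at all, where \eqref{ross} no longer applies directly. I would handle the edgeless case as an explicit base check (\eqref{ref22a} holds for a lone edge, where its right-hand side is $2$, and for a lone loop colored $1$, where it is $1$) and carry the remaining vertex discrepancy through the parameter $t$ in the arithmetic above. A minor supporting point, used implicitly throughout, is that a bridge creates no new element of $\HH_1$ while an in-component edge or a colored self-loop can only enlarge the $\rho$-image; both facts follow directly from the definition of $\rho$ in \secref{rho}.
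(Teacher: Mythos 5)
Your argument is correct, but it follows a genuinely different route from the paper's. The paper disposes of \propref{ross-adding} in two sentences of matroid flavor: since $(G,\bgamma)$ is Ross-sparse with $2n-2$ edges, adding one edge creates a unique circuit, which is either a Ross-circuit (so $c'_0=0$ and it meets \eqref{ref22a} with equality) or a Laman-circuit with trivial $\rho$-image (again meeting the count), and the bound on all of $(G+ij,\bgamma)$ is asserted to follow. You instead verify \eqref{ref22a} subgraph by subgraph: a subgraph avoiding the new edge $e$ inherits the strictly stronger bound \eqref{ross}, while for $G'\ni e$ you apply \eqref{ross} to $G''=G'-e$ and reduce to the inequality $c'+2c'_0+1\le 2t+2c''+3c''_0$, settled by cases according to whether $e$ is a bridge, closes a cycle, or is the loop colored $1$, with $t$ absorbing endpoints isolated in $G''$. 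I checked the cases and they all hold (several with more slack than the forms $0\le c''+c''_0$ and $1\le c''+c''_0$ you name; e.g.\ merging two components yields $-1\le c''+c''_0$, which is harmless), and the supporting facts you invoke --- a bridge adds nothing to $\HH_1$, an in-component edge or colored loop can only enlarge the $\rho$-image, and a merged component is $\rho$-trivial iff both halves were --- are sound. What your route buys: it is self-contained and makes explicit the globalization step the paper leaves implicit, namely why the count on the single created circuit propagates to \emph{every} subgraph containing $e$ (in matroid terms, a non-Ross-sparse $G'\subseteq G+e$ must contain the unique circuit, but one still needs arithmetic like yours to pass from the circuit to $G'$ itself); it also treats the self-loop case explicitly, which the paper's proof never mentions (a loop colored $1$ is itself a one-vertex Ross-circuit, so the circuit argument extends, but silently). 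What the paper's route buys: brevity, and a conceptual placement of the statement inside the circuit structure of the Ross matroid that recurs later, e.g.\ in \propref{ross-circuit-decomp} and the reduced-graph construction.
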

\begin{proof}
Adding $ij$ with any color to a Ross $(G,\bgamma)$ creates either a Ross-circuit, for which
$c'_0=0$ or a Laman-circuit with trivial $\rho$-image.  Both of these types of
graph meet this count, and so the whole of $(G+ij,\bgamma)$ does as well.
\end{proof}
It is easy to see that every reflection-Laman graph is a reflection-$(2,2)$ graph.  The
converse is not true.
\begin{lemma}\lemlab{reflection-laman-vs-reflection-22}
A colored graph $(G,\bgamma)$ is a reflection-Laman graph if and only if it is a
reflection-$(2,2)$ graph and no subgraph with trivial $\rho$-image is a $(2,2)$-block.\eop
\end{lemma}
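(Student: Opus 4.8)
The plan is to prove the two implications separately, leaning on the observation that the \emph{only} difference between the reflection-Laman count \eqref{cone-laman} and the reflection-$(2,2)$ count \eqref{ref22a} is the coefficient of $c'_0$, which is $3$ in the former and $2$ in the latter. For the forward implication, suppose $(G,\bgamma)$ is reflection-Laman. Since $c'_0 \ge 0$, the bound \eqref{cone-laman} is always at least as strong as \eqref{ref22a}, so $(G,\bgamma)$ is immediately reflection-$(2,2)$. If some subgraph $H$ with trivial $\rho$-image were a $(2,2)$-block, then $H$ would be connected and $(2,2)$-tight, giving $c'=0$, $c'_0 = 1$, and $m_H = 2n_H - 2$; but \eqref{cone-laman} forces $m_H \le 2n_H - 3$, a contradiction. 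Hence no such block can exist.

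For the reverse implication I would first record that both sides of \eqref{cone-laman} are additive over the connected components of a subgraph $G'$: a non-trivial component contributes $2n_i - 1$ and a trivial component contributes $2n_i - 3$ to the right-hand side, while the left-hand side is just the sum of the edge counts. Consequently it suffices to verify \eqref{cone-laman} for each connected subgraph separately, since every component of an arbitrary subgraph is itself a connected subgraph. If $G'$ is connected with non-trivial $\rho$-image, then $c'=1$ and $c'_0 = 0$, and the required inequality $m' \le 2n' - 1$ is exactly what \eqref{ref22a} already supplies; there is nothing to improve.

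The remaining and only substantive case is a connected subgraph $G'$ with trivial $\rho$-image, where $c'=0$, $c'_0 = 1$, and \eqref{ref22a} yields only $m' \le 2n' - 2$, one edge short of the needed $m' \le 2n' - 3$. Here I would invoke the hypothesis that no trivial-$\rho$ subgraph is a $(2,2)$-block. The key preliminary point is that every connected subgraph $H \subseteq G'$ again has trivial $\rho$-image, because its cycles are cycles of $G'$; applying \eqref{ref22a} componentwise then shows that $G'$ is $(2,2)$-sparse. A connected $(2,2)$-sparse graph attains $m' = 2n' - 2$ precisely when it is $(2,2)$-tight, that is, a $(2,2)$-block. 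Since $G'$ has trivial $\rho$-image, the hypothesis forbids this, so $m' \neq 2n' - 2$ and therefore $m' \le 2n' - 3$, which closes the count.

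The argument is essentially bookkeeping, so I do not expect a genuine obstacle; the only real content is the reduction to connected subgraphs together with the identification of the ``missing edge'' in the trivial-$\rho$ case with the failure to be a $(2,2)$-block. The main thing to get right is the definitional match — that a $(2,2)$-block of trivial $\rho$-image is exactly a connected, $(2,2)$-tight subgraph with $c'=0$ and $c'_0=1$ — so that the gap between the two sparsity counts collapses to the single integer alternative $m' = 2n'-2$ versus $m' \le 2n'-3$.
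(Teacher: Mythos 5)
Your proof is correct. The paper offers no argument for this lemma (it is stated with the end-of-proof mark as immediate), and your verification --- reducing both counts to connected subgraphs via additivity over components, observing that \eqref{ref22a} already gives the needed bound for connected subgraphs with non-trivial $\rho$-image, and identifying the one-edge gap in the trivial-$\rho$ case ($m' \le 2n'-2$ versus $m' \le 2n'-3$) with the presence of a connected $(2,2)$-tight subgraph, i.e.\ a $(2,2)$-block --- is exactly the routine bookkeeping the authors intended, carried out without gaps.
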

Let $(G,\bgamma)$ be a reflection-Laman graph, and let $G_1,G_2,\ldots,G_t$ be the Ross-circuits
in $(G,\bgamma)$.  Define the \emph{reduced graph} $(G^*,\bgamma)$ of $(G,\bgamma)$ to be the
colored graph obtained by contracting each $G_i$, which is not already a single vertex with a self-loop (this is necessarily
colored $1$), into a new vertex $v_i$, removing any self-loops
created in the process, and then adding a new self-loop with color $1$ to each of the $v_i$.
By \propref{ross-circuit-decomp} the reduced graph is well-defined.
\begin{prop}\proplab{reduced-graph}
Let $(G,\bgamma)$ be a reflection-Laman graph.  Then its reduced graph is a reflection-$(2,2)$ graph.
\end{prop}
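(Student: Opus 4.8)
The plan is to verify the two defining properties of a reflection-$(2,2)$ graph for $(G^*,\bgamma)$: the edge count $m^* = 2n^* - 1$, and the sparsity inequality \eqref{ref22a}. The count is bookkeeping. A Ross-circuit $G_i$ has $n_i$ vertices and $2n_i - 1$ edges, since deleting any edge leaves a Ross graph, which has $2n_i - 2$ edges. Contracting $G_i$ to $v_i$, removing the self-loops produced, and attaching one fresh self-loop deletes $n_i - 1$ vertices and $2(n_i - 1)$ edges. Summing over the Ross-circuits, which are vertex-disjoint by \propref{ross-circuit-decomp}, turns $m = 2n - 1$ into $m^* = 2n^* - 1$. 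Well-definedness of the construction rests on \eqref{cone-laman} applied to single vertices: a self-loop colored $0$ would violate \eqref{cone-laman}, so every self-loop of $G$ is colored $1$, and a single-vertex self-loop is itself a Ross-circuit that needs no contraction.

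For the sparsity inequality I would transfer each subgraph $H^*$ of $(G^*,\bgamma)$ to a subgraph $H$ of $(G,\bgamma)$ and apply \eqref{cone-laman}. Let $H$ be obtained from $H^*$ by replacing every contracted vertex $v_i$ that $H^*$ uses with the whole Ross-circuit $G_i$, keeping all non-self-loop edges, and discarding the added self-loops; this is a genuine subgraph of $G$. Write $n', m', c', c'_0$ for the parameters of $H^*$, let $S$ index the contracted vertices it uses, and let $a$ be the number of added self-loops it contains. Expanding a used $v_i$ into $G_i$ changes $2n' - m'$ by $0$ when the self-loop of $v_i$ lies in $H^*$ and by $-1$ when it does not, giving the identity $2n'_H - m'_H = (2n' - m') - (|S| - a)$. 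Since each $G_i$ is connected and deleting self-loops does not affect connectivity, expansion is a bijection on connected components, so $H$ and $H^*$ have equally many components.

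The delicate step is how components are redistributed between the trivial and non-trivial classes. The internal structure of any $G_i$ already has non-trivial $\rho$-image — indeed a Ross-circuit has $c'_0 = 0$ — so expanding a $v_i$ forces its component of $H$ to be non-trivial whether or not the self-loop was present, exactly as the self-loop colored $1$ did in $H^*$. Hence a component of $H$ is trivial precisely when the corresponding component of $H^*$ is trivial and contains no $v_i$. Splitting $c'_0 = p + q$, where $q$ trivial components of $H^*$ meet some $v_i$ and $p$ meet none, $H$ therefore has $p$ trivial and $c' + q$ non-trivial components. Substituting into \eqref{cone-laman} for $H$ and using the identity reduces the target inequality \eqref{ref22a} for $H^*$ to $p + (|S| - a) \ge q$.

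This closing inequality is where the whole construction pays off, and it is the one point needing care: in a trivial component of $H^*$ meeting some $v_i$, that $v_i$ cannot carry its self-loop, as the self-loop is colored $1$ and would render the component non-trivial; thus each of the $q$ such components contributes at least one $v_i$ to the count $|S| - a$ of used contracted vertices whose self-loops are absent, and distinct components use distinct Ross-circuits. Therefore $|S| - a \ge q$, which already gives $p + (|S| - a) \ge q$ and finishes the proof. The main obstacle is exactly this component-bookkeeping: reconciling the $-3c'_0$ of the reflection-Laman count with the $-2c'_0$ of the reflection-$(2,2)$ count by charging the difference to the contracted vertices that appear without their self-loops.
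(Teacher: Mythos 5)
Your proof is correct, but it takes a genuinely different route from the paper's. The paper argues through uncolored sparse-graph theory: it selects a Ross-basis $(G',\bgamma)$ of $(G,\bgamma)$, observes that $G'$ is a $(2,2)$-basis of $G$ with $2n-1-t$ edges in which each Ross-circuit vertex set $V_i$ spans a $(2,2)$-block, and then invokes the Lee--Streinu Structure Theorem for $(k,\ell)$-sparse graphs to conclude that contracting the $V_i$ yields a $(2,2)$-sparse graph on $n^+$ vertices with $2n^+-1-t$ edges; attaching the $t$ self-loops colored $1$ then gives the reflection-$(2,2)$ count, with the colored subgraph verification left as ``easy to check.'' You instead verify \eqref{ref22a} directly for every subgraph $H^*$ of the reduced graph by expanding each used $v_i$ back into the full Ross-circuit $G_i$ --- legitimate because the $G_i$ are vertex-disjoint by \propref{ross-circuit-decomp}, connected, and have nontrivial $\rho$-image --- and applying \eqref{cone-laman} to the resulting subgraph of $G$. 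Your identity $2n'_H - m'_H = (2n'-m') - (|S|-a)$, the bijection on connected components, the triviality dichotomy (a component of $H$ is trivial exactly when its component of $H^*$ is trivial and avoids all $v_i$), and the closing inequality $p + (|S|-a)\ge q$ --- justified because a trivial component cannot contain a color-$1$ self-loop, so each of the $q$ trivial components meeting contracted vertices supplies a distinct $v_i$ counted by $|S|-a$ --- are all sound; this is precisely the bookkeeping that reconciles the $-3c'_0$ of \eqref{cone-laman} with the $-2c'_0$ of \eqref{ref22a}. What each approach buys: the paper's is shorter modulo the black-boxed Structure Theorem and known facts about $(2,2)$-blocks, while yours is self-contained, stays entirely in the colored category, and actually supplies the subgraph-level check the paper waves at --- a check that is not per-component trivial, since a component of the reduced graph may carry several self-loops, so the naive per-component $(2,2)$ bound does not suffice.

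One small point you should make explicit: both your count $m^*=2n^*-1$ and the identity above assume that every edge of $G$ with both endpoints in $V_i$ belongs to $G_i$, so that contraction destroys exactly the $2n_i-1$ edges of the circuit. This is true, but it needs the one-line argument that otherwise $G_i+e$ would have $2n_i$ edges on $n_i$ vertices, violating \eqref{cone-laman}.
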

\begin{proof}
Let $(G,\bgamma)$ be a reflection-Laman graph with $t$ Ross-circuits with vertex sets $V_1,\ldots,V_t$.
By \propref{ross-circuit-decomp}, the $V_i$ are all
disjoint. Now select a Ross-basis $(G',\bgamma)$ of $(G,\bgamma)$.  The graph $G'$
is also a $(2,2)$-basis of $G$, with $2n-1 - t$ edges, and each of the $V_i$ spans a $(2,2)$-block
in $G'$. The $(k,\ell)$-sparse graph
Structure Theorem \cite[Theorem 5]{LS08} implies that contracting each of the $V_i$ into a new vertex $v_i$
and discarding any self-loops created, yields a $(2,2)$-sparse graph $G^+$ on $n^+$ vertices and $2n^+ - 1 - t$
edges.  It is then easy to check that adding a self-loop colored $1$ at each of the $v_i$ produces
a colored graph satisfying the reflection-$(2,2)$ counts \eqref{ref22a} with exactly
$2n^+ -1$ edges.  Since this is the reduced graph, we are done.
\end{proof}

\subsection{Decomposition characterizations}
A \emph{map-graph} is a graph with exactly one cycle per connected component.  A \emph{cone-$(1,1)$} or \emph{reflection-$(1,1)$} graph
is defined to be a colored graph $(G,\bgamma)$ where $G$, taken as an undirected graph, is a map-graph and
the $\rho$-image of each connected component is non-trivial. Note that by \cite[Matroid Theorem]{Z82},
cone-$(1,1)$ and reflection-$(1,1)$ graphs each are bases of a matroid.
\begin{lemma}\lemlab{reflection-22-decomp}
Let $(G,\bgamma)$ be a colored graph.  Then $(G,\bgamma)$ is a reflection-$(2,2)$ graph if and only
if it is the union of a spanning tree and a reflection-$(1,1)$ graph.
\end{lemma}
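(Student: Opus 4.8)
The plan is to read \eqref{ref22a} as the matroid union of two familiar matroids on the edge set $E(G)$, so that the lemma becomes a direct application of the matroid union theorem. The first matroid is the ordinary cycle matroid of $G$, whose independent sets are forests and whose bases are spanning trees; on an edge subset spanning a subgraph $G'$ with $n'$ vertices, $c'$ components of non-trivial $\rho$-image, and $c'_0$ components of trivial $\rho$-image, its rank equals the number of edges in a spanning forest of $G'$, namely $n' - c' - c'_0$. The second matroid is the one furnished by \cite[Matroid Theorem]{Z82} whose bases are exactly the reflection-$(1,1)$ graphs. First I would pin down its rank function: a maximal independent subset of $G'$ is a spanning forest together with one extra edge closing an (unbalanced) cycle in each component of non-trivial $\rho$-image, while a component of trivial $\rho$-image admits no such extra edge; hence its rank on $G'$ is $n' - c'_0$.

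With the rank functions in hand, adding them gives, for every subgraph $G'$,
\[
  r_{\mathrm{cyc}}(G') + r_{\rho}(G') = (n' - c' - c'_0) + (n' - c'_0) = 2n' - c' - 2c'_0,
\]
which is precisely the right-hand side of the reflection-$(2,2)$ count \eqref{ref22a}. By the matroid union theorem, the ground set $E(G)$ can be partitioned into a set independent in the cycle matroid and a set independent in Zaslavsky's matroid if and only if $m' \le r_{\mathrm{cyc}}(G') + r_{\rho}(G')$ for every edge subset, i.e.\ if and only if $(G,\bgamma)$ satisfies the sparsity inequalities \eqref{ref22a}. (Isolated vertices are harmless here: adjoining one increases both $n'$ and $c'_0$ by one and leaves both sides of \eqref{ref22a} unchanged, matching the behavior of the two rank functions on the full vertex set.)

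It remains to upgrade ``independent sets'' to ``spanning tree and reflection-$(1,1)$ graph''. Here I would invoke the global edge count $m = 2n-1$ built into the definition of reflection-$(2,2)$. Applying \eqref{ref22a} to all of $G$ forces $c' = 1$ and $c'_0 = 0$, so a reflection-$(2,2)$ graph is connected with non-trivial $\rho$-image; consequently the cycle matroid has rank $n-1$ and Zaslavsky's matroid has full rank $n$. In any union partition $E(G) = I_1 \sqcup I_2$ the bounds $|I_1| \le n-1$ and $|I_2| \le n$ together with $|I_1| + |I_2| = 2n-1$ force $|I_1| = n-1$ and $|I_2| = n$; thus $I_1$ is a spanning tree and $I_2$ is a size-$n$ base of Zaslavsky's matroid, which by its structure is a spanning map-graph with every component of non-trivial $\rho$-image, i.e.\ a reflection-$(1,1)$ graph. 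Conversely, a decomposition into a spanning tree and a reflection-$(1,1)$ graph has $2n-1$ edges and, being a union partition, yields the inequalities \eqref{ref22a} for every subgraph by intersecting each part with the subgraph.

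The routine parts are the two rank computations and the edge bookkeeping. The one step that needs genuine care is the dictionary underlying Zaslavsky's matroid: namely that its rank is governed by counting components of non-trivial $\rho$-image (equivalently, that a cycle supplies an independent extra edge precisely when its $\rho$-image is non-trivial) and that a full-rank base is exactly a reflection-$(1,1)$ graph. Once this frame-matroid dictionary is fixed, the lemma follows mechanically from the matroid union theorem.
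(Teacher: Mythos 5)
Your proof is correct and takes essentially the same route as the paper's: both rewrite the reflection-$(2,2)$ count \eqref{ref22a} as the sum of the graphic matroid rank $n' - c' - c'_0$ and the rank $n' - c'_0$ of Zaslavsky's frame matroid, then invoke the Matroid Union Theorem (the paper via the Edmonds--Rota construction). The only difference is that you spell out details the paper leaves implicit, namely the verification of the frame-matroid rank function and the edge-count bookkeeping ($m = 2n-1$ forcing a spanning tree of size $n-1$ and a full base of size $n$), all of which is sound.
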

\begin{proof}
By \cite[Matroid Theorem]{Z82}, reflection-$(1,1)$ graphs are equivalent to graphs satisfying
$m = n$ and
\begin{equation} \eqlab{ref11a}
m' \le n' - c'_0
\end{equation}
for every subgraph $G'$. Moreover, the right hand side of \eqref{ref11a} is
the rank function of the matroid. We can rewrite \eqref{ref22a} as
\begin{equation}\eqlab{ref22redux}
m' \le (n' - c'_0) + (n' - c' - c'_0).
\end{equation}
The second term in \eqref{ref22redux} is well-known to be the rank function of the graphic matroid, and
the lemma follows from the Edmonds-Rota construction \cite{ER66} and the Matroid Union Theorem.
\end{proof}
Nearly the same proof yields the analogous statement for cone-$(2,2)$ graphs.
\begin{prop}\proplab{cone-22-decomp}
A colored graph $(G,\bgamma)$ is cone-$(2,2)$ if and only if it is the the union of two
cone-$(1,1)$ graphs.\eop
\end{prop}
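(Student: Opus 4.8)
The plan is to follow the proof of \lemref{reflection-22-decomp} almost verbatim, replacing the graphic-matroid summand there with a second copy of the cone-$(1,1)$ matroid. First I would observe that cone-$(1,1)$ graphs are defined by exactly the same combinatorial condition as reflection-$(1,1)$ graphs, namely being a map-graph whose connected components each have non-trivial $\rho$-image. Consequently \cite[Matroid Theorem]{Z82} applies in identical fashion: cone-$(1,1)$ graphs are the bases of a matroid whose independent sets are the subgraphs satisfying $m' \le n' - c'_0$, and whose rank function is $n' - c'_0$.

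Next I would rewrite the cone-$(2,2)$ count \eqref{cone22a} as
\begin{equation}
2n' - 2c'_0 = (n' - c'_0) + (n' - c'_0),
\end{equation}
exhibiting the right-hand side as a sum of two copies of the cone-$(1,1)$ rank function. By the Edmonds-Rota construction \cite{ER66} together with the Matroid Union Theorem, this sum is precisely the rank function of the union of the cone-$(1,1)$ matroid with itself. A colored graph with $m = 2n$ edges therefore satisfies \eqref{cone22a} on every subgraph if and only if it is a basis of this union matroid, which by definition means it decomposes as an edge-disjoint union of two cone-$(1,1)$ bases, i.e.\ two cone-$(1,1)$ graphs.

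The only point requiring care, exactly as in the reflection case, is the bookkeeping that converts the subgraph-count definition of cone-$(2,2)$ into a statement about bases rather than merely independent sets of the union matroid. Since each cone-$(1,1)$ summand contributes at most $n$ edges and the union has rank $2n$, a subgraph with $m = 2n$ that meets the count on all subgraphs is forced to be a basis, so this step is routine. I do not anticipate any genuine obstacle: the entire substance of the proposition is inherited from the cone-$(1,1)$ matroid structure supplied by \cite{Z82}, and the present statement differs from \lemref{reflection-22-decomp} only in that both summands are now the same rank function.
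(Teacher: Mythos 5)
Your proof is correct and takes essentially the same approach as the paper: the paper proves \propref{cone-22-decomp} by remarking that ``nearly the same proof'' as \lemref{reflection-22-decomp} applies, which is precisely your argument of rewriting \eqref{cone22a} as the sum of two copies of the cone-$(1,1)$ rank function $n'-c'_0$ supplied by \cite[Matroid Theorem]{Z82} and invoking the Edmonds--Rota construction together with the Matroid Union Theorem. Your closing bookkeeping (each independent summand has at most $n'-c'_0\le n$ edges, so a $2n$-edge independent set forces both parts to be spanning map-graphs with every component unbalanced) is exactly the routine step the paper leaves implicit.
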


In the sequel, it will be convenient to use this slight refinement of \lemref{reflection-22-decomp}.
\begin{prop}\proplab{reflection-22-nice-decomp}
Let $(G,\bgamma)$ be a reflection-$(2,2)$ graph.  Then there is an equivalent coloring $\bgamma'$ of the edges of
$G$ such that the tree in the decomposition as in \lemref{reflection-22-decomp} has all edges colored by the identity.
\end{prop}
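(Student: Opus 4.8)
The plan is to combine the decomposition from \lemref{reflection-22-decomp} with the recoloring freedom supplied by \lemref{uncolored-subgraph}, using the crucial fact that the homomorphism $\rho$ depends only on the equivalence class of the coloring, not on the representative.

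First I would invoke \lemref{reflection-22-decomp} to write the edge set of $(G,\bgamma)$ as a disjoint union $E(T)\sqcup E(H)$, where $T$ is a spanning tree and $H$ is a reflection-$(1,1)$ graph. Since $T$ is a forest, $\HH_1(T,\Z)=0$, so in particular $\rho(\HH_1(T,\Z))$ is trivial. This is exactly the hypothesis of \lemref{uncolored-subgraph} applied to the subgraph $T$, which then produces an equivalent coloring $\bgamma'$ of $G$ with $\gamma'_{ij}=0$ for every edge $ij\in E(T)$. It is worth stating at the outset that this coloring is the candidate; the remaining work is only to verify that the decomposition persists.

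It remains to check that the same partition $E(T)\sqcup E(H)$ still witnesses a decomposition of the type in \lemref{reflection-22-decomp} for the new coloring $\bgamma'$. The tree $T$ is determined by the underlying undirected graph of $G$, which recoloring does not change, so $T$ is still a spanning tree, now with all edges colored by the identity. For the half $H$, recall that equivalent colorings induce the same homomorphism $\HH_1(G,\Z)\to\Z/k\Z$; hence the $\rho$-image of each connected component of $H$ is unchanged. Since $H$ is still the same undirected map-graph and each of its components still has non-trivial $\rho$-image, $H$ remains a reflection-$(1,1)$ graph under $\bgamma'$. Therefore $\bgamma'$ is the desired equivalent coloring.

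The point requiring care—rather than a genuine obstacle—is precisely this invariance of $\rho$ under equivalent colorings, which is what guarantees that the reflection-$(1,1)$ half of the decomposition survives the recoloring. Once that observation is in place the argument is immediate, since $\rho(\HH_1(T,\Z))$ is automatically trivial for any forest and \lemref{uncolored-subgraph} handles the rest.
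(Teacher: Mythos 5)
Your proof is correct and follows the paper's own argument exactly: the paper's proof is the one-line ``Apply \lemref{uncolored-subgraph} to the tree in the decomposition,'' and you have simply spelled out the implicit verifications (that $\HH_1(T,\Z)=0$ gives the hypothesis of \lemref{uncolored-subgraph}, and that equivalence of colorings preserves $\rho$ so the reflection-$(1,1)$ part survives). Nothing is missing, and your added checks are exactly the right ones.
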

\begin{proof}
Apply \lemref{uncolored-subgraph} to the tree in the decomposition.
\end{proof}
\propref{reflection-22-nice-decomp} has the following re-interpretation in terms of the symmetric
lift $(\tilde{G},\varphi)$:
\begin{prop}\proplab{reflection-laman-decomp-lift}
Let $(G,\bgamma)$ be a reflection-$(2,2)$ graph.  Then for a decomposition, as provided
by \propref{reflection-22-nice-decomp}, into a spanning tree $T$ and a reflection-$(1,1)$
graph $X$:
\begin{itemize}
\item Every edge $ij\in T$ lifts to the two edges $\tilde{i}_0 \tilde{j}_0$ and $\tilde{i}_1 \tilde{j}_1$.
In particular, the vertex representatives in the lift all lie in a single connected
component of the lift of $T$.
\item Each connected component of $X$ lifts to a connected graph.
\end{itemize}
\end{prop}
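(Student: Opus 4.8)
The plan is to read off both statements directly from the definition of the lift $(\tilde G,\varphi)$ recalled in the notation — the fibers $\tilde i_\gamma$ over vertices and the fiber $\{\tilde i_\gamma \tilde j_{\gamma+\gamma_{ij}}\}$ over a directed colored edge $ij$ — together with a single standard fact from covering-space theory about when a lift is connected. Throughout I fix the decomposition of $(G,\bgamma)$ into a spanning tree $T$ and a \refOO graph $X$ furnished by \propref{reflection-22-nice-decomp}; its whole point is that I may assume the coloring $\bgamma'$ assigns color $0$ to every edge of $T$. Since we are in the reflection setting, all colors are $\Z/2\Z$-valued, so a non-trivial $\rho$-image is automatically the full group $\Z/2\Z$.

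For the first bullet, each edge $ij \in T$ has color $0$, so the fiber-over-an-edge description gives its lift as $\tilde i_\gamma \tilde j_{\gamma+0} = \tilde i_\gamma \tilde j_\gamma$ for $\gamma \in \{0,1\}$, i.e. the two edges $\tilde i_0 \tilde j_0$ and $\tilde i_1 \tilde j_1$. For the connectivity claim I observe that the level-$0$ lifts $\tilde i_0 \tilde j_0$ of the edges of $T$ form a subgraph of the lift of $T$ that is mapped isomorphically onto $T$ by $\tilde i_0 \mapsto i$; because $T$ is a spanning tree of $G$, this copy is a connected subgraph containing every vertex representative $\tilde i_0$, so all the representatives lie in a single connected component of the lift of $T$.

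For the second bullet, let $X_c$ be a connected component of $X$. As $X$ is a \refOO graph, $X_c$ is a single-cycle (map-graph) component with non-trivial $\rho$-image, i.e. $\rho(\HH_1(X_c,\Z)) = \Z/2\Z$. I then invoke the covering-space connectivity criterion: after fixing a base vertex $b$ of $X_c$, two fiber points $\tilde b_{\gamma_1}$ and $\tilde b_{\gamma_2}$ lie in the same component of the lift exactly when $\gamma_2 - \gamma_1 \in \rho(\HH_1(X_c,\Z))$, so the components of the lift of $X_c$ are indexed by the cosets of $\rho(\HH_1(X_c,\Z))$ in $\Z/2\Z$. With this image equal to the whole group there is a single coset, and hence the lift of $X_c$ is connected.

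Essentially all steps are unwindings of the definitions, so the only place needing care is the connectivity criterion used for the second bullet. I expect the mild obstacle to be stating and justifying the monodromy/coset count cleanly — this is the standard correspondence between subgroups of $\Z/k\Z$ and the components of the associated cover, compatible with the description of $\rho$ in \secref{rho} — after which the reduction of ``non-trivial'' to ``surjective'' in the $\Z/2\Z$ case makes the conclusion immediate.
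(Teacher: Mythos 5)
Your proof is correct and matches the paper's reasoning exactly: the paper states \propref{reflection-laman-decomp-lift} without proof, as an immediate ``re-interpretation'' of \propref{reflection-22-nice-decomp} in the lift, and your argument simply makes that explicit. Both bullets follow just as you say --- the identity colors on $T$ force the fiber description $\tilde{i}_\gamma\tilde{j}_\gamma$ and hence two disjoint copies of $T$, and the standard coset/monodromy criterion for connectivity of covers, applied with $\rho(\HH_1(X_c,\Z))=\Z/2\Z$, gives the connectivity of each component's lift.
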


\subsection{The overlap graph of a cone-$(2,2)$ graph}
Let $(G,\bgamma)$ be cone-$(2,2)$ and fix a decomposition of it
into two cone-$(1,1)$ graphs $X$ and $Y$.
Let  $X_i$ and $Y_i$ be the connected components of $X$ and $Y$, respectively.
Also select a base vertex $x_i$ and $y_i$ for each connected component of $X$ and $Y$,
with all base vertices on the cycle of their component. Denote the collection of base vertices by $B$.
We define the \emph{overlap graph} of $(G,X,Y,B)$ to be the directed graph with:
\begin{itemize}
\item Vertex set $B$.
\item A directed edge from $x_i$ to $y_j$ if $y_j$ is a vertex in $X_i$.
\item A directed edge from $y_j$ to $x_i$ if $x_i$ is a vertex in $Y_j$
\end{itemize}

The property of the overlap graph we need is:
\begin{prop}\proplab{overlap-cycle}
Let $(G,\bgamma)$ be a cone-$(2,2)$ graph. Fix a decomposition into cone-$(1,1)$
graphs $X$ and $Y$ and a choice of base vertices.
Then the overlap graph of $(G, X,Y, B)$ has a directed cycle in each connected component.
\end{prop}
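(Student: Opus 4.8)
The plan is to reduce the statement to an elementary fact about directed graphs: a finite digraph in which every vertex has in-degree at least one contains a directed cycle in each connected component. With that in hand, the entire argument comes down to computing in-degrees in the overlap graph, and the one thing that needs genuine justification is that both $X$ and $Y$ are \emph{spanning} subgraphs of $G$.

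First I would establish the spanning property. By \propref{cone-22-decomp} the cone-$(2,2)$ graph $(G,\bgamma)$ splits into two cone-$(1,1)$ graphs $X$ and $Y$, which partition the $2n$ edges of $G$. Each cone-$(1,1)$ graph is a map-graph all of whose components have non-trivial $\rho$-image; in particular no component is an isolated vertex, so every component has exactly as many edges as vertices. Hence $|E(X)|$ equals the number of vertices met by $X$, and likewise for $Y$. Since $|E(X)| + |E(Y)| = 2n$ while each of $X$ and $Y$ meets at most $n$ vertices, both must meet exactly $n$; that is, $X$ and $Y$ each span all of $V(G)$. Consequently every vertex of $G$ — in particular every base vertex in $B$ — lies in exactly one component $X_i$ of $X$ and in exactly one component $Y_j$ of $Y$.

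Next I would read off the in-degrees directly from the definition of the overlap graph. Fix a base vertex $x_i$ of $X$. As a vertex of $G$ it lies in a unique component $Y_j$, i.e. $x_i \in V(Y_j)$, which by definition produces the directed edge $y_j \to x_i$; thus $x_i$ has positive in-degree. Symmetrically, each base vertex $y_j$ lies in a unique $X_i$, i.e. $y_j \in V(X_i)$, giving the edge $x_i \to y_j$, so $y_j$ also has positive in-degree. Therefore every vertex of the overlap graph has in-degree at least one. (A short count even shows all in-degrees equal $1$: each $y_j$ contributes exactly one incoming edge $x_i \to y_j$ and each $x_i$ exactly one incoming edge $y_j \to x_i$, so the number of edges equals $|B|$; but positivity is all that is needed.)

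Finally I would invoke the elementary digraph fact. Given any connected component $C$ of the overlap graph, start at any vertex and repeatedly walk backwards along an incoming edge; since every vertex has positive in-degree this walk never terminates, and by finiteness some vertex must repeat, exhibiting a directed cycle. Every vertex visited lies in $C$ because each traversed edge joins its endpoints in the underlying graph, so $C$ contains a directed cycle, which is exactly the claim. The only step requiring care is the spanning argument of the second paragraph, and I expect the main subtlety there to be the use of the non-trivial $\rho$-image hypothesis to rule out isolated-vertex (and tree) components, since it is precisely that hypothesis which forces the edge–vertex count to pin both $X$ and $Y$ to all $n$ vertices.
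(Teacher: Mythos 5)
Your proposal is correct and follows essentially the same route as the paper's proof: every vertex of the overlap graph has in-degree exactly one, and a finite digraph with this property has a directed cycle in each connected component (the paper cites \cite{ST09} for this fact, which you instead prove directly via the backward-walk argument, and you also make explicit the spanning property of $X$ and $Y$ that the paper leaves implicit). One small quibble: it is the map-graph condition (exactly one cycle per connected component), not the non-trivial $\rho$-image hypothesis, that rules out isolated-vertex and tree components in your edge--vertex count, so your closing remark misattributes where that step gets its force.
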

\begin{proof}
Every vertex has exactly one incoming edge, since each vertex is in exactly one connected
component of each of $X$ and $Y$.  Directed graphs with an in-degree exactly one
have exactly one directed cycle per connected component (see, e.g., \cite{ST09}).
\end{proof}

\begin{figure}[htbp]
\centering
\includegraphics[width=.6\textwidth]{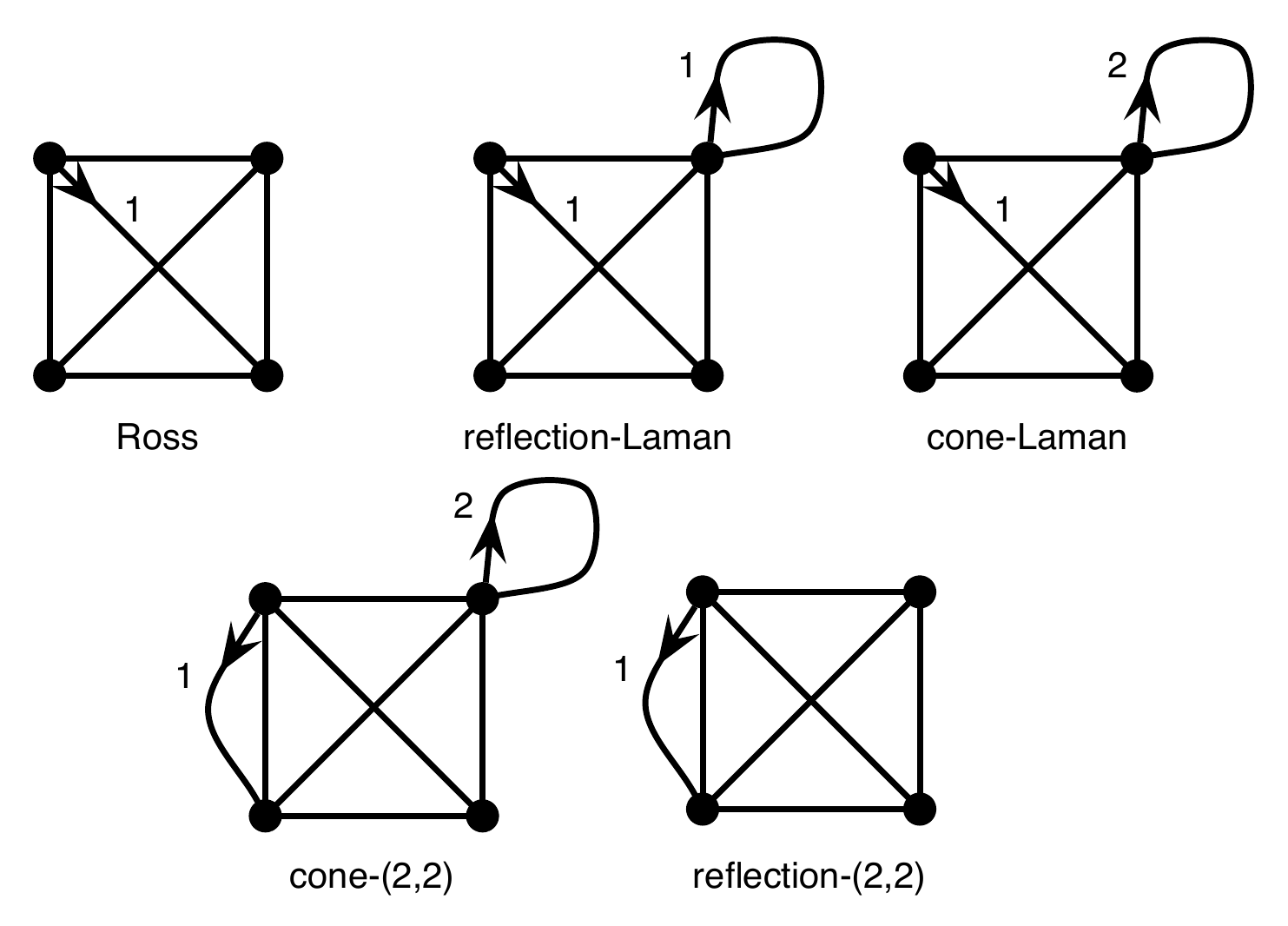}
\caption{\small Various examples of graphs. For the reflection-Laman and reflection-(2,2)
graphs, the colors lie in $\Z/2\Z$. For the rest, the colors lie in $\Z/k\Z$.   \normalsize }
\label{fig:examples}
\end{figure}

\section{Cone direction networks}\seclab{cone-direction-network}
In this section, we prove \theoref{cone-direction-network}.  The main step in the proof
is:
\begin{prop}\proplab{cone-collapse}
Let $(G,\bgamma)$ be a cone-$(2,2)$ graph.  Then every realization of a generic
direction network on $(G,\bgamma)$ is collapsed, with all vertices placed on the
rotation center.
\end{prop}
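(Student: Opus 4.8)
The plan is to recognize that the realization space of a cone direction network on $(G,\bgamma)$ is the kernel of a square linear system, and that ``every realization is collapsed at the rotation center'' is exactly the statement that this kernel is trivial. Because the symmetry constraint \eqref{dn-realization2} lets us use the positions $\vec p_i$ of the $n$ quotient vertices as the only unknowns, and because the $k$ lifts of a quotient edge $ij$ all yield the same equation $\iprod{\gamma_{ij}\cdot \vec p_j - \vec p_i}{\vec d^\perp_{ij}}=0$ (the rotation $\gamma$ acts orthogonally and factors out of \eqref{dn-realization1}), there is exactly one equation per quotient edge; since $(G,\bgamma)$ is cone-$(2,2)$ there are $m = 2n$ of them, matching the $2n$ real unknowns. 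Thus I would first reduce the proposition to showing that the associated $2n\times 2n$ matrix $M(\vec d)$ is nonsingular for generic $\vec d$. Since $\det M(\vec d)$ is a polynomial in the coordinates of the direction vectors, its vanishing locus is Zariski-closed, so it suffices to exhibit a \emph{single} assignment of directions for which $M$ is nonsingular; generic nonsingularity, and hence collapse to $\vec p = \vec 0$, then follows automatically.

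To organize such an assignment I would use the decomposition from \propref{cone-22-decomp}, writing the edges of $(G,\bgamma)$ as a disjoint union $X \sqcup Y$ of two cone-$(1,1)$ graphs. Splitting the rows of $M$ accordingly, the realization space becomes $S_X \cap S_Y$, where $S_X$ (resp.\ $S_Y$) is the solution space of the $X$-equations (resp.\ $Y$-equations) alone. Each cone-$(1,1)$ graph has $n$ edges, and the nontrivial $\rho$-image on every component makes the map $\vec x\mapsto \gamma\cdot\vec x-\vec x$ invertible for the nonzero monodromy $\gamma$ around each cycle; from this one checks that $\dim S_X = \dim S_Y = n$ generically, so that nonsingularity of $M$ is equivalent to transversality $S_X\cap S_Y = \{\vec 0\}$. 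The simplest instance already displays the mechanism: when $X$ and $Y$ are single self-loops of nonzero color at one vertex, $S_X$ and $S_Y$ are two lines through the origin of the form $\R\cdot(\gamma\cdot\vec x-\vec x)^{-1}\vec d$, and these meet only at $\vec 0$ for generic directions.

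The hard part, and the main obstacle, is establishing this transversality in general by a direct geometric argument, since --- as noted in the introduction --- the structured matrix $M$ is not generic enough for the standard linear-representation results on matroid unions to apply: the entries at the two endpoints of an edge are tied together by a single direction $\vec d_{ij}$ and by the rotation $\gamma_{ij}$, so row independence cannot be read off from the Edmonds--Rota decomposition and must be verified by hand. Here I would bring in the overlap graph of $(G,X,Y,B)$ together with \propref{overlap-cycle}, which guarantees a directed cycle in each connected component. I expect to choose the directions so that a putative nonzero element of $S_X\cap S_Y$ can be propagated around such a directed cycle, each overlap contributing an invertible, rotation-twisted transfer map; closing the cycle then produces a fixed-vector condition for the product of these maps that generic directions rule out, forcing $\vec p = \vec 0$. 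Organizing this propagation --- in particular, peeling off the tree parts hanging from the cycle of each cone-$(1,1)$ component and controlling the twisting at each overlap so that the cyclic product is generically free of the eigenvalue $1$ --- is the delicate step, and is exactly where the direct geometric reasoning must replace the matroid-union machinery.
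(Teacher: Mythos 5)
Your proposal follows essentially the same route as the paper's own proof: the single-witness/Zariski reduction, the decomposition into two cone-$(1,1)$ graphs via \propref{cone-22-decomp}, a per-component direction gadget pinning base-vertex fibers to lines through the origin (your self-loop computation is exactly the paper's \lemref{rotation}, and the general gadget is \lemref{cone11-directions}, which assigns the direction $(R^{\gamma/2}_k\cdot\vec v)^\perp$ to a lifted spanning tree plus the cycle-closing edge), and propagation around a directed cycle of the overlap graph (\propref{overlap-cycle}) by transfer maps whose cyclic product generically scales lengths by $\lambda\neq 1$ (\propref{cone-genericity}), followed by pushing the collapse along directed paths to the remaining base vertices and then to all vertices. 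Two caveats on the details you defer: the transfer maps $T(\vec v,\vec w,R)$ are \emph{not} invertible when the relevant cycle monodromy has order two --- they are identically zero (\lemref{Tvw-not-zero}, \lemref{scalefactor-blowup}) --- which is why the paper treats $k=2$ separately by the simpler argument that every vertex sits at the intersection of two distinct lines through the origin; and since the cyclic composition may return to a \emph{different} representative in the fiber over the base vertex, the paper replaces your fixed-vector/eigenvalue-$1$ condition by the observation that rotational symmetry forces all points of a fiber to have equal norm, so a scale factor $\lambda\neq 1$ forces that norm to vanish (\lemref{cone-cycle-vertices}).
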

Next, we introduce \emph{colored direction networks}, which will be convenient to work with.

\subsection{The colored realization system}\seclab{colored-system}
The system of equations \eqref{dn-realization1}--\eqref{dn-realization2}
defining the realization space of a cone direction network
$(\tilde{G},\varphi,\vec d)$ is linear, and
as such has a well-defined dimension.  Let $(G,\bgamma)$ be the
colored quotient graph of $(\tilde{G},\varphi)$.

Since our setting is symmetric frameworks, we require that the assigned directions
also be symmetric. In other words, if $\vec d_{ij}$ is the direction for the edge $\tilde{i}_0 \tilde{j}_{\gamma_{ij}}$
in the fiber of $ij$, then $R_k^\gamma \vec d_{ij}$ is the direction for the edge $\tilde{i}_\gamma \tilde{j}_{\gamma+\gamma_{ij}}$
where here and throughout this section $R_k$ is the counter-clockwise rotation about the origin
through angle $2 \pi /k$.
Thus,  to specify a direction network on $(\tilde{G}, \varphi)$, we need only assign a direction to one edge in
each edge orbit.
Furthermore, since the directions and realizations must be symmetric, the system can be reduced to the following one where
the unknowns consist of $n$ points $\vec p_i$, one for each vertex $i$ of the quotient graph
where $\vec d_{ij} = \vec d_{\tilde{i}_0 \tilde{j}_{\gamma_{ij}}}$.
\begin{eqnarray}\eqlab{colored-system}
\iprod{R_k^{\gamma_{ij}}\cdot\vec p_j - \vec p_i}{\vec d_{ij}} = 0 & \qquad
\text{for all edges $ij\in E(G)$.}
\end{eqnarray}
\propref{cone-collapse} can be reinterpreted to say that the colored realization system \eqref{colored-system},
in matrix form, is a linear representation for the matroid of cone-$(2,2)$ graphs.  The representation
obtained via \propref{cone-collapse} is different than the one produced
by the matroid union construction for linearly representable matroids (see \cite[Proposition 7.16.4]{B86}).

\subsection{Genericity}\seclab{genericity}
Let $(G,\bgamma)$ be a colored graph with $m$ edges.  A statement about direction networks
$(\tilde{G},\varphi,\vec d)$ is \emph{generic} if it holds on the complement of a
proper algebraic subset of the possible direction assignments, which is canonically identified
with $\R^{2m}$. Some facts about generic statements that
we use frequently are:
\begin{itemize}
\item Almost all direction assignments are generic.
\item If a set of directions is generic, then so are all sufficiently
small perturbations of it.
\item If two properties are generic, then their intersection is as well.
\item The maximum rank of \eqref{colored-system} is a generic property.
\end{itemize}

This next proof is relatively standard.
\subsection{Proof that \propref{cone-collapse} implies \theoref{cone-direction-network}}\seclab{cone-direction-network-proof}
We prove each direction of the statement in turn.  Since it is technically
easier, we prove the equivalent statement on colored direction networks.

\paragraph{Cone-laman graphs generically have faithful realizations}
The proof in \cite[Section 15.3]{MT10} applies with small modifications.

\paragraph{Cone-laman circuits have collapsed edges}
For the other direction, we suppose that $(G,\bgamma)$ has $n$ vertices and is not cone-Laman.  If the number of edges $m$
is less than $2n-1$, then the realization space of any direction network is at least $2$-dimensional, so
it contains more than just rescalings.  Thus, we assume that $m\ge 2n - 1$.  In this case, $G$ is not
cone-Laman sparse, so it contains a cone-Laman circuit $(G',\bgamma)$.  Thus, we are reduced to showing that
any cone-Laman circuit has, generically, only realizations with collapsed edges, since these then force collapsed
edges in any realization of a  generic colored direction network on $(G,\bgamma)$.

There are two types of subgraphs $(G',\bgamma)$ that constitute minimal violations of cone-Laman sparsity:
\begin{itemize}
\item $(G',\bgamma)$ is a cone-$(2,2)$ graph.
\item $(G',\bgamma)$ has trivial $\rho$-image, and is a $(2,2)$-block.
\end{itemize}
If $(G',\bgamma)$ is a cone-$(2,2)$ graph, then \propref{cone-collapse} applies to it, and we are done.

For the other type, we may assume that the colors on $G'$ are zero by \lemref{uncolored-subgraph}
in which case the system \eqref{colored-system} on $(G', \bgamma)$ is equivalent to a direction
network on $G'$ as a finite, unsymmetric, uncolored graph. Thus, the Parallel Redrawing Theorem \cite[Theorem 4.1.4]{W96}
in the form \cite[Theorem 3]{ST10} applies directly to show that all realizations of $G'$ have only
collapsed edges.
\eop

The rest of this section proves \propref{cone-collapse}.

\subsection{Geometry of some generic linear projections}
We first establish some geometric results we need below.
Given a unit vector $\vec v\in \R^2$
and a point $\vec p \in \R^2$, we denote by $\ell(\vec v, \vec p)$ the affine line consisting of points
$\vec q$ where
\[
\vec q - \vec p = \lambda \vec v
\]
for some scalar $\lambda \in \R$. I.e. $\ell(\vec v, \vec p)$ is the line through $\vec p$ in direction $\vec v$.

\subsubsection{An important linear equation}
The following is a key lemma which will determine where certain points must lie when solving a cone direction network.
\begin{lemma}\lemlab{rotation}  Suppose $R$ is a non-trivial rotation about the origin, $\vec v^*$ is a unit
vector and $\vec p$ satisfies
$$(R-I) \vec p = \lambda \vec v^*$$
for some $\lambda \in \R$.  Then, for some $C \in \R$, we have $\vec p = C \vec v$
where $\vec v = R_{\pi/2} R^{-1/2} \vec v^*$, $R^{-1/2}$ is the inverse of a square root of $R$, and $R_{\pi/2}$ is the counter-clockwise rotation through angle $\pi/2$.
\end{lemma}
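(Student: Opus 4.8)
The plan is to invert $R-I$ directly and show that the resulting vector is a scalar multiple of $\vec v$. Writing $R = R_\theta$ for the rotation of $\R^2$ through some angle $\theta \not\equiv 0 \pmod{2\pi}$, I would first observe that $\det(R-I) = 2(1-\cos\theta) \neq 0$ for a non-trivial rotation, so $R-I$ is invertible. Hence $\vec p = \lambda (R-I)^{-1}\vec v^*$ is uniquely determined, and it suffices to prove that $(R-I)^{-1}\vec v^*$ is parallel to $\vec v = R_{\pi/2}R^{-1/2}\vec v^*$.

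The key algebraic step is the half-angle factorization
$$R - I = R_\theta - I = R_{\theta/2}\left(R_{\theta/2} - R_{-\theta/2}\right) = 2\sin(\theta/2)\, R_{\theta/2}\,R_{\pi/2},$$
which I would justify by the direct computation $R_{\theta/2} - R_{-\theta/2} = 2\sin(\theta/2)\,R_{\pi/2}$ together with the fact that planar rotations commute. Taking inverses gives $(R-I)^{-1} = (2\sin(\theta/2))^{-1}\,R_{-\pi/2}\,R_{-\theta/2}$, and since in the plane $R_{-\pi/2} = -R_{\pi/2}$, this becomes $-(2\sin(\theta/2))^{-1}\,R_{\pi/2}\,R_{-\theta/2}$. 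Recognizing $R_{-\theta/2} = R^{-1/2}$ and $R_{\pi/2}R^{-1/2}\vec v^* = \vec v$, I conclude $(R-I)^{-1}\vec v^* = -(2\sin(\theta/2))^{-1}\vec v$, so that $\vec p = C\vec v$ with $C = -\lambda/(2\sin(\theta/2))$, as claimed.

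The computation is essentially routine, so there is no serious obstacle; the only points requiring care are the sign bookkeeping—in particular the identity $R_{-\pi/2} = -R_{\pi/2}$, which is special to the plane—and the ambiguity in the choice of square root $R^{-1/2}$. For the latter I would note that the two square roots of $R$ differ by $-I$, so replacing $R^{-1/2}$ by the other choice merely flips the sign of $\vec v$, which is absorbed into $C$; hence the conclusion is independent of that choice. I might also record the geometric content that motivates the statement: $(R-I)\vec p = R\vec p - \vec p$ is the chord joining two points of equal modulus on a circle about the origin, hence perpendicular to the bisector direction $R^{1/2}\vec p$, and the lemma is precisely the inversion of this relation.
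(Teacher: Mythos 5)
Your proposal is correct and takes essentially the same route as the paper: your half-angle factorization $R-I = 2\sin(\theta/2)\,R_{\theta/2}\,R_{\pi/2}$ is precisely the paper's one-line observation that $(R-I)R^{-1/2} = R^{1/2}-R^{-1/2}$ is a scalar multiple of $R_{\pi/2}$, from which the lemma follows. You merely carry out that computation explicitly, with the invertibility of $R-I$, the sign bookkeeping, and the square-root ambiguity spelled out.
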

\begin{proof}
A computation shows that $(R - I) R^{-1/2} = R^{1/2}-R^{-1/2}$ is a scalar multiple of $R_{\pi/2}$. The
Lemma follows.
\end{proof}

\subsubsection{The projection $T(\vec v,\vec w,R)$}
Let $\vec v$ and $\vec w$ be unit vectors in $\R^2$, and $R$ some nontrivial rotation.  Denote
by $\vec v^*$ the vector $R^{1/2} \cdot\vec v^\perp$ for some choice of square root $R^{1/2}$ of $R$.

We define $T(\vec v,\vec w,R)$ to be the linear projection from $\ell(\vec v,0)$ to $\ell(\vec w,0)$
in the direction $\vec v^*$.  The following properties of $T(\vec v,\vec w,R)$ are straightforward.
\begin{lemma}\lemlab{Tvw-not-zero}
Let $\vec v$ and $\vec w$ be unit vectors, and $R$ a nontrivial rotation.  Then, the linear
map $T(\vec v,\vec w,R)$:
\begin{itemize}
\item Is defined if $\vec v^*$ is not in the same direction as $\vec w$.
\item Is identically zero if and only if $\vec v^*$ and $\vec v$ are collinear.
\end{itemize}
\end{lemma}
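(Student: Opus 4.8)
The plan is to unwind the definition of $T(\vec v,\vec w,R)$ into elementary plane geometry and then read off both claims. Writing $\ell(\vec v,0) = \R \vec v$ and $\ell(\vec w,0) = \R\vec w$ for the two lines through the origin, the projection sends a point $\vec q\in \R\vec v$ to the intersection of the affine line $\ell(\vec v^*,\vec q)$ with $\R\vec w$; this is exactly the unique point of $\R\vec w$ lying on the line through $\vec q$ in direction $\vec v^*$, whenever such a point exists and is unique. So everything reduces to understanding when two lines in $\R^2$---one of them $\R\vec w$ through the origin, the other a member of the parallel family in direction $\vec v^*$---meet transversally.

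First I would dispatch the definedness claim. Two lines in the plane have a unique common point precisely when they are not parallel, and the line $\ell(\vec v^*,\vec q)$ is parallel to $\R\vec w$ exactly when $\vec v^*$ is collinear with $\vec w$. Hence $T(\vec v,\vec w,R)$ is well-defined on all of $\R\vec v$ as soon as $\vec v^*$ is not in the direction of $\vec w$, which is the first bullet. For the easy direction of the second bullet, suppose $\vec v^*$ and $\vec v$ are collinear. Then for every $\vec q\in\R\vec v$ the line $\ell(\vec v^*,\vec q)$ coincides with $\R\vec v = \ell(\vec v,0)$, and---since definedness forces $\vec v^*$, hence $\vec v$, to be non-parallel to $\vec w$---the lines $\R\vec v$ and $\R\vec w$ meet only at the origin. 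Thus $T$ sends every $\vec q$ to $0$, so it is identically zero.

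For the converse I would argue contrapositively: assuming $\vec v^*$ is not collinear with $\vec v$, I evaluate $T$ at the single point $\vec q=\vec v$. The line $\ell(\vec v^*,\vec v)$ passes through the origin if and only if $\vec v\in\R\vec v^*$, which is exactly the excluded collinearity; so this line misses the origin, and its (unique, by definedness) intersection with $\R\vec w$ is therefore a nonzero point. Hence $T(\vec v)\neq 0$ and $T$ is not identically zero. None of these steps is genuinely hard---this is why the paper calls the properties ``straightforward''---and the only real care needed is the bookkeeping in the degenerate cases: distinguishing when the line $\ell(\vec v^*,\vec q)$ coincides with a coordinate line versus merely meets it at the origin, and invoking the definedness hypothesis from the first bullet to rule out $\vec v\parallel\vec w$ when concluding that $\R\vec v\cap\R\vec w=\{0\}$.
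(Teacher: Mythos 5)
Your proof is correct: the paper states this lemma without proof (it is introduced only with the remark that the properties are ``straightforward''), and your argument---unwinding the definition of $T(\vec v,\vec w,R)$ as intersecting the parallel family in direction $\vec v^*$ with the line $\R\vec w$, checking transversality for definedness, and evaluating at $\vec q = \vec v$ for the non-vanishing direction---is exactly the intended elementary argument. You also correctly handle the one subtle point, namely invoking the definedness hypothesis to rule out $\vec v \parallel \vec w$ when concluding $\R\vec v \cap \R\vec w = \{0\}$ in the forward direction of the second bullet.
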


\subsubsection{The scale factor of $T(\vec v,\vec w,R)$}
The image $T(\vec v,\vec w,R)\cdot \vec v$ is equal to $\lambda\vec w$, for some scalar $\lambda$.
We define the \emph{scale factor} $\lambda(\vec v,\vec w,R)$ to be this $\lambda$. We need the following elementary fact
about the scaling factor of $T(\vec v,\vec w,R)$.

\begin{lemma} \lemlab{scalefactor-poly}
Let $\vec v$ and $\vec w$ be unit vectors such that $\vec v^*$ and $\vec w$ are linearly independent.  Then
the scaling factor $\lambda(\vec v,\vec w,R)$ of the linear map $T(\vec v,\vec w, R)$ is
given by
\[
\frac{ \langle \vec v, (\vec v*)^\perp \rangle}{\langle \vec w, (\vec v*)^\perp\rangle}.
\]
\end{lemma}
\begin{proof}
The map $T(\vec v, \vec w, R)$ is equivalent to the composition of:
\begin{itemize}
\item perpendicular projection from $\ell(\vec v, 0)$ to $\ell((\vec v*)^\perp, 0)$, followed by
\item the inverse of perpendicular projection $\ell(\vec w, 0) \to \ell((\vec v^*)^\perp, 0)$.
\end{itemize}
The first map scales the length of vectors by $\langle \vec v, (\vec v*)^\perp \rangle$ and the second by the reciprocal of $\langle \vec w, (\vec v*)^\perp\rangle$.
\end{proof}

From \lemref{scalefactor-poly} it is immediate that
\begin{lemma}\lemlab{scalefactor-blowup}
The scaling factor $\lambda(\vec v,\vec w, R)$ is identically $0$ precisely when $R$ is an order two rotation.
If $R$ is not an order $2$ rotation, then $\lambda(\vec v, \vec w, R)$ approaches infinity as $\vec v^*$
approaches $\pm \vec w$.
\end{lemma}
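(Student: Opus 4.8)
The plan is to argue directly from the explicit formula for the scale factor supplied by \lemref{scalefactor-poly},
\[
\lambda(\vec v,\vec w,R) = \frac{\iprod{\vec v}{(\vec v^*)^\perp}}{\iprod{\vec w}{(\vec v^*)^\perp}},
\]
by computing the numerator and denominator separately. The first step is to simplify $(\vec v^*)^\perp$. Recall $\vec v^* = R^{1/2}\vec v^\perp$; since all rotations about the origin commute and $\perp$ is itself the rotation $R_{\pi/2}$, I would write $(\vec v^*)^\perp = R_{\pi/2}R^{1/2}\vec v^\perp = R^{1/2}R_{\pi/2}\vec v^\perp = R^{1/2}(\vec v^\perp)^\perp = -R^{1/2}\vec v$. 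Letting $\theta$ denote the rotation angle of $R$ and using that $\vec v$ is a unit vector rotated by $\theta/2$ under $R^{1/2}$, the numerator becomes
\[
\iprod{\vec v}{(\vec v^*)^\perp} = -\iprod{\vec v}{R^{1/2}\vec v} = -\cos(\theta/2).
\]

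The key observation is that this numerator is a constant, independent of both $\vec v$ and $\vec w$. On the domain where the formula is valid, i.e.\ where $\vec v^*$ and $\vec w$ are linearly independent (equivalently $\vec w \not\parallel \vec v^*$), the denominator $\iprod{\vec w}{(\vec v^*)^\perp}$ is nonzero. Hence $\lambda(\vec v,\vec w,R)$ vanishes identically precisely when $\cos(\theta/2)=0$, which holds iff $\theta \equiv \pi \pmod{2\pi}$, i.e.\ iff $R$ is an order-two rotation. This is the first assertion.

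For the second assertion I would track the denominator in the limit $\vec v^* \to \pm\vec w$. Then $(\vec v^*)^\perp \to \pm\vec w^\perp$, so $\iprod{\vec w}{(\vec v^*)^\perp} \to \pm\iprod{\vec w}{\vec w^\perp} = 0$. When $R$ is not order two, the numerator $-\cos(\theta/2)$ is a fixed nonzero constant, so the ratio $\lambda(\vec v,\vec w,R)$ tends to infinity, as claimed. There is essentially no serious obstacle here beyond bookkeeping; the only point requiring care is the meaning of ``identically $0$''. I would emphasize that because the numerator is genuinely independent of $\vec v$ and $\vec w$, vanishing of $\lambda$ as a function of the unit vectors reduces to the single scalar condition $\cos(\theta/2)=0$, rather than to an accidental coincidence for special directions.
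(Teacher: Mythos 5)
Your proof is correct and takes the same route as the paper, which states the lemma as immediate from \lemref{scalefactor-poly}; your computation that $(\vec v^*)^\perp = -R^{1/2}\vec v$, making the numerator the constant $-\cos(\theta/2)$ (zero iff $R$ has order two) while the denominator tends to $0$ as $\vec v^*\to\pm\vec w$, is precisely the intended justification, simply written out in full.
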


\subsubsection{Generic sequences of the map $T(\vec v,\vec w,R)$}
Let $\vec v_1,\vec v_2,\ldots, \vec v_n$ be unit vectors, and $S_1,S_2,\ldots, S_n$ be rotations of the form $R^i_k$ where
$R_k$ is a rotation of order $k$.
We define the linear
map $T(\vec v_1,S_1,\vec v_2,S_2,\ldots, \vec v_n,S_n)$ to be
\[
T(\vec v_1,S_1,\vec v_2,S_2,\ldots, \vec v_n,S_n) =
T(\vec v_n,\vec v_1,S_n) \circ T(\vec v_{n-1},\vec v_n,S_{n-1}) \circ\cdots \circ T(\vec v_1,\vec v_2,S_1)
\]
This next proposition plays a key role in the next section, where it is interpreted
as providing a genericity condition for cone direction networks.
\begin{prop}\proplab{cone-genericity}
Let $\vec v_1,\vec v_2,\ldots, \vec v_n$ be pairwise linearly independent unit vectors, and
$S_1,S_2,\ldots, S_n$ be rotations of the form $R^i_k$.
Then if the $\vec v_i$ avoid a proper algebraic subset
of $\left(\mathbb{S}^1\right)^n$ (that depends on the $S_i$),
the map $T(\vec v_1,S_1,\vec v_2,S_2,\ldots, \vec v_n,S_n)$ scales the length of
vectors by a factor of $\lambda\neq 1$.
\end{prop}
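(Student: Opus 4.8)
The plan is to reduce \propref{cone-genericity} to a single non-vanishing statement and then exhibit one configuration where it holds. Each $T(\vec v_i,\vec v_{i+1},S_i)$ carries the line $\ell(\vec v_i,0)$ to $\ell(\vec v_{i+1},0)$, sending the generator $\vec v_i$ to $\lambda(\vec v_i,\vec v_{i+1},S_i)\,\vec v_{i+1}$, so the full composition maps $\ell(\vec v_1,0)$ to itself and is multiplication by the scalar $\Lambda=\prod_{i=1}^n\lambda(\vec v_i,\vec v_{i+1},S_i)$ (indices cyclic, $\vec v_{n+1}=\vec v_1$). Thus ``scales by $\lambda\neq 1$'' means exactly $\Lambda\neq 1$. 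By \lemref{scalefactor-poly} each factor is a ratio of inner products that are linear in the coordinates of the $\vec v_i$, so $\Lambda$ is a rational function on $(\mathbb{S}^1)^n$ and $\{\Lambda=1\}$ is an algebraic subset. The proper algebraic set to be avoided is the union of $\{\Lambda=1\}$, the poles of $\Lambda$ (where some $T(\vec v_i,\vec v_{i+1},S_i)$ is undefined), and the loci where some pair $\vec v_i,\vec v_j$ is collinear; everything rests on showing $\Lambda\not\equiv 1$.

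First I would dispose of the degenerate case. If some $S_i$ has order two, then by \lemref{scalefactor-blowup} that factor vanishes identically, whence $\Lambda\equiv 0$ and in particular $\Lambda\not\equiv 1$ (here the bad set is just the collinearity locus). So I may assume no $S_i$ has order two, in which case \lemref{scalefactor-blowup} gives that every factor is a nonzero rational function and that $\lambda(\vec v_i,\vec v_{i+1},S_i)$ blows up as $\vec v_i^\ast$ approaches $\pm\vec v_{i+1}$.

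To produce a point with $\Lambda\neq 1$ in this case, fix an index $i_0$, start from any pairwise linearly independent configuration at which all factors are defined and nonzero, and drive $\vec v_{i_0+1}$ toward the direction of $\vec v_{i_0}^\ast$ while holding the other $\vec v_j$ fixed. By \lemref{scalefactor-blowup} the factor $\lambda(\vec v_{i_0},\vec v_{i_0+1},S_{i_0})\to\infty$. The only other factor depending on $\vec v_{i_0+1}$ is $\lambda(\vec v_{i_0+1},\vec v_{i_0+2},S_{i_0+1})$; a short computation gives $(\vec v^\ast)^\perp=-S^{1/2}\vec v$, so by \lemref{scalefactor-poly} its numerator $\langle \vec v_{i_0+1},(\vec v_{i_0+1}^\ast)^\perp\rangle=-\cos(\alpha_{i_0+1}/2)$ depends only on $S_{i_0+1}$ and is nonzero (as $S_{i_0+1}$ is not order two), while its denominator stays nonzero along a generic approach, so this factor remains finite and nonzero; all remaining factors are unchanged. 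Hence $|\Lambda|\to\infty$, and a configuration just short of the limit has $\Lambda\neq 1$ with the $\vec v_i$ still pairwise independent, giving $\Lambda\not\equiv 1$. (The same computation in fact yields the closed form $\Lambda=\prod_i\cos(\alpha_i/2)\big/\prod_i\cos(\theta_{i+1}-\theta_i-\alpha_i/2)$, where $\vec v_j=(\cos\theta_j,\sin\theta_j)$ and $S_i$ rotates by $\alpha_i$, whose denominator is visibly non-constant in each $\theta_j$; this gives an alternative, purely algebraic proof that $\Lambda\not\equiv 1$.)

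The main obstacle is the bookkeeping in the last step: I must verify that pushing $\vec v_{i_0+1}\to\vec v_{i_0}^\ast$ isolates exactly one blowing-up factor. This needs $\vec v_{i_0}^\ast$ to stay distinct from the other $\vec v_j$ and from $\pm\vec v_{i_0}$ — the latter uses precisely that $S_{i_0}$ is not order two, so that $\vec v_{i_0}^\ast=S_{i_0}^{1/2}\vec v_{i_0}^\perp$ is not collinear with $\vec v_{i_0}$ (cf. \lemref{Tvw-not-zero})— and that the neighboring factor's denominator does not simultaneously degenerate, which is a generic condition on the approach. Once $\Lambda\not\equiv 1$ is established, $\{\Lambda=1\}$ is a proper algebraic subset of $(\mathbb{S}^1)^n$, and the proposition follows.
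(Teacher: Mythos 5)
Your proposal is correct and takes essentially the same approach as the paper's proof: both reduce, via \lemref{scalefactor-poly}, to showing that the product of scale factors is not \emph{identically} $1$ (an algebraic condition on $(\mathbb{S}^1)^n$), both dispose of order-two rotations through the identically-zero factor of \lemref{scalefactor-blowup}, and both then exhibit one configuration with $\lambda \neq 1$ by blowing up a single factor while the other factor involving the moving vector stays bounded away from zero (its numerator being a nonzero constant and its denominator at most $1$). The only cosmetic differences are that you drive $\vec v_{i_0+1}$ toward $\vec v_{i_0}^*$ where the paper instead moves $\vec v_1$ so that $\vec v_1^*$ approaches $\vec v_2$, and your closed trigonometric formula for the product is a correct bonus the paper does not record.
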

\begin{proof}
If any of the $T(\vec v_i,\vec v_{i+1},S_i)$ are identically zero, we are done, so we assume none of them
are.  The map $T(\vec v_1,S_1,\vec v_2,S_2,\ldots, \vec v_n,S_n)$ then scales vectors by a factor of:
\[
\lambda(\vec v_1,\vec v_2,S_1)\cdot\lambda(\vec v_2,\vec v_3,S_2)\cdots
\lambda(\vec v_{n-1},\vec v_n,S_{n-1})\cdot\lambda(\vec v_n,\vec v_1,S_n)
\]
which we denote $\lambda$.  That $\lambda$ is constantly one is a polynomial statement in the $\vec v_i$ by
\lemref{scalefactor-poly}, and so it is either always true or holds only on a proper algebraic subset of
$\left(\mathbb{S}^1\right)^n$.  This means it suffices to prove that there is one selection for the
$\vec v_i$ where $\lambda\neq 1$.

Without loss of generality, we may assume that
$T(\vec v_j,\vec v_{j+1},S_j)$ is defined for all $j$. Note that $|\lambda( \vec v_n, \vec v_1, S_n)|$ attains
an absolute (non-zero) minimum when $\vec v_n^* = \vec v_1^\perp$ by \lemref{scalefactor-poly} and
$|\lambda(\vec v_1, \vec v_2, S_1)|$ grows arbitrarily large as $\vec v_1^*$ moves towards $\vec v_2$.
Thus $|\lambda|$ grows arbitrarily large as $\vec v_1^*$ moves towards $\vec v_2$.
This means that $\lambda$ is not constantly $1$.
\end{proof}

\subsection{Proof of \propref{cone-collapse}}\seclab{cone-collapse-proof}
It will suffice to prove the existence of a single assignment of directions $\vec d$ to
a cone-$(2,2)$ graph $(G,\bgamma)$ for which we can show all the realizations $G(\vec p)$
of the resulting direction network are collapsed.  The generic statement is then immediate.
The strategy is to first decompose $(G,\bgamma)$ into cone-$(1,1)$ graphs using
\propref{cone-22-decomp}, assign directions to each connected cone-$(1,1)$ graph
which force a certain local geometric structure on $G(\vec p)$ (\secref{cone-collapse-directions}),
and then to show that the properties of the overlap graph imply the whole realization must
be collapsed (Sections \ref{sec:cone-collapse-2}--\ref{sec:cone-collapse-ge3}).

\subsubsection{Special direction networks on connected map-graphs}\seclab{cone-collapse-directions}
Let $(G,\bgamma)$ be a $\Z/k\Z$-colored graph that is a connected cone-$(1,1)$
graph.
We select and fix a base vertex $b\in V(G)$ that is on the (unique) cycle in $G$.
This next lemma provides the main ``gadget'' that we
use in the proof of \propref{cone-collapse} below.
\begin{lemma}\lemlab{cone11-directions}
Let $k\ge 2$, and let $(G,\bgamma)$ be a $\Z/k\Z$-colored graph that is a connected cone-$(1,1)$ graph with a base vertex $b$.
Let $\gamma\in \Z/k\Z$ be the $\rho$-image of the cycle in $G$, let
$\vec v$ be a unit vector, and let $\vec v^* = (R^{\gamma/2}_k\cdot \vec v)^\perp$.  We can assign directions $\vec d$ to the edges of
$G$ so that, in all realizations of the corresponding direction network $(\tilde{G}, \varphi, \vec d)$ on the lift:
\begin{itemize}
\item For each $\gamma' \in \Z/k\Z$, the point $\vec p_{\tilde{b}_{\gamma'}}$ lies
on the line $\ell(R^{\gamma'}_k \cdot \vec v, 0)$
\item The rest of the points all lie on the lines $\ell(R^{\gamma'} \vec v^*, \vec p_{\tilde{b}_{\gamma'}})$ as $\gamma'$ ranges over $\Z/k\Z$.
\end{itemize}
\end{lemma}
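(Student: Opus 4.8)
The plan is to work with the colored realization system \eqref{colored-system} on the quotient $(G,\bgamma)$, recalling that symmetric realizations of the lift correspond exactly to its solutions via $\vec p_{\tilde i_{\gamma'}} = R_k^{\gamma'}\vec p_i$, so I only need to control the points $\vec p_i$, $i \in V(G)$, and then transport everything to the lift by this symmetry at the end. First I would normalize the coloring. Since $G$ is a connected map-graph it is a spanning tree plus one extra edge, and because $b$ lies on the unique cycle I may choose a cycle edge $e$ that is oriented along the cycle so as to point into $b$; then $T = G - e$ is a spanning tree, $\rho(\HH_1(T,\Z))$ is trivial, and by \lemref{uncolored-subgraph} I may pass to an equivalent coloring in which every edge of $T$ is colored $0$. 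Equivalent colorings have the same lift and the same symmetric direction networks, so it suffices to prove the lemma for this coloring; after recoloring, $e$ carries the full $\rho$-image of the cycle, which is $\gamma$, with its head at $b$.

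The direction assignment is the simplest possible: give \emph{every} edge the direction $\vec v^*$, so that in any realization each edge $ij$ forces $R_k^{\gamma_{ij}}\vec p_j - \vec p_i$ to be a scalar multiple of $\vec v^*$. For a tree edge (color $0$) this reads $\vec p_j - \vec p_i = \text{(scalar)}\,\vec v^*$. Since $T$ is connected and contains $b$, summing these relations along the tree path from $b$ to any vertex $w$ shows that $\vec p_w - \vec p_b$ is a multiple of $\vec v^*$, so every non-base vertex lies on $\ell(\vec v^*,\vec p_b)$. Applying the symmetry $\vec p_{\tilde w_{\gamma'}} = R_k^{\gamma'}\vec p_w$, which carries $\ell(\vec v^*,\vec p_b)$ to $\ell(R_k^{\gamma'}\vec v^*,\vec p_{\tilde b_{\gamma'}})$, this is precisely the second bullet.

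It remains to pin $\vec p_b$ to $\ell(\vec v,0)$, which is the job of the closing edge $e$. Writing $s$ for its other endpoint, the previous paragraph gives $\vec p_s = \vec p_b + \rho\,\vec v^*$ for some scalar $\rho$, and the equation for $e$ (color $\gamma$, head $b$) says that $R_k^\gamma\vec p_b - \vec p_s = (R_k^\gamma - I)\vec p_b - \rho\,\vec v^*$ is a multiple of $\vec v^*$. As $-\rho\,\vec v^*$ is already a multiple of $\vec v^*$, this forces $(R_k^\gamma - I)\vec p_b = \lambda\,\vec v^*$ for some $\lambda \in \R$. Now \lemref{rotation} applies with $R = R_k^\gamma$ and yields $\vec p_b = C\,R_{\pi/2}R_k^{-\gamma/2}\vec v^*$; since $\vec v^* = (R_k^{\gamma/2}\vec v)^\perp = R_{\pi/2}R_k^{\gamma/2}\vec v$, this collapses to $\vec p_b = -C\,\vec v$, i.e.\ $\vec p_b \in \ell(\vec v,0)$. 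The same symmetry relation gives $\vec p_{\tilde b_{\gamma'}} \in \ell(R_k^{\gamma'}\vec v,0)$, which is the first bullet.

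The one genuinely delicate point, and the step I would guard most carefully, is the orientation of $e$. The cancellation above works exactly because the sliding endpoint $s$ enters the closing equation \emph{unrotated}, so that its free displacement $\rho\,\vec v^*$ remains parallel to $\vec v^*$ and merges harmlessly into $\lambda\,\vec v^*$; had $e$ instead pointed out of $b$, that displacement would appear as $\rho\,R_k^{-\gamma}\vec v^*$, which is transverse to $\vec v^*$ and would leave $\vec p_b$ entirely unconstrained. This is why I insist on concentrating the color on a cycle edge whose head is $b$. The only remaining thing to verify is the bookkeeping in the definition of $\vec v^*$, so that \lemref{rotation} returns the line $\ell(\vec v,0)$ exactly rather than a rotated copy of it, and this is the short computation recorded in the previous paragraph.
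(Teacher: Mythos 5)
Your proof is correct and takes essentially the same route as the paper's: both delete one cycle edge incident to the cycle to leave a spanning tree, assign every edge the direction $\vec v^*$, use connectivity of the tree to confine all points to lines in direction $\vec v^*$, and then invoke \lemref{rotation} on the closing edge to pin $\vec p_b$ to $\ell(\vec v,0)$. The only cosmetic difference is that you argue in the quotient after recoloring via \lemref{uncolored-subgraph}, whereas the paper argues in the lift, where its choice of the fiber edge incident to the copy of $i$ in $\tilde{T}$ plays exactly the role of your orientation of $e$ into $b$ (your side remark that the wrong orientation leaves $\vec p_b$ ``entirely unconstrained'' is slightly off for order-$2$ rotations, where $R_k^{-\gamma}\vec v^* = -\vec v^*$ is still parallel to $\vec v^*$, but this does not affect your actual argument).
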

\begin{proof}
We assign
directions in the lift $\tilde{G}$ of $G$.  We start by selecting an edge $bi\in E(G)$ that is
incident on the base vertex $b$ and in the cycle in $G$. Then, $G-bi$ is a spanning tree $T$ of $G$.

Since $T$ is contractible, it lifts to $k$ disjoint copies of itself in $\tilde{G}$.  Let $\tilde{T}$
be the copy containing $\tilde{b}_0$.  Note that $\tilde{T}$ hits the fiber
over every edge in $G$ except for $bi$ exactly one time and the fiber
over every vertex exactly one time.

Recall that we only need to assign a direction to one edge in each orbit.
We first assign every edge in $\tilde{T}$ the direction $\vec v^*=(R^{\gamma/2}_k\cdot \vec v)^\perp$.
From this choice of directions already, it now follows by the connectivity of $T$
that in any realization of the cone direction network induced on the $\Gamma$-orbit
of $\tilde{T}$ any
point lies on some $\ell(R_k^{\gamma'} \vec v^*, \vec p_{\tilde{b}_{\gamma'}})$.

It only remains to specify a direction for some edge in the fiber of $bi$.
Select the edge in the fiber over $bi$ incident on the copy of $i$ in $\tilde{T}$.  Assign this
edge the direction $\vec v^*$ as well. Note that this edge is necessarily incident on the vertex
$\tilde{b}_{\gamma}$ since the cycle has $\rho$-image $\gamma$. The choice of direction
on this edge implies that
$$\vec p_{\tilde{b}_\gamma} - \vec p_{\tilde{b}_0} = R_k^\gamma \vec p_{\tilde{b}_0} - \vec p_{\tilde{b}_0} = \lambda v^*$$
for some scalar $\lambda$. It now follows from \lemref{rotation}, applied to the rotation $R_k^\gamma$,
that $\vec p_{\tilde{b}_0}$ lies on $\ell(\vec v, 0)$.
\end{proof}

\subsubsection{Proof of \propref{cone-collapse} for order $2$ rotations}\seclab{cone-collapse-2}
Decompose the cone-$(2,2)$ graph $(G,\bgamma)$ into two edge-disjoint (but not necessarily connected)
cone-$(1,1)$ graphs $X$ and $Y$, using \propref{cone-22-decomp}.
The order of the $\rho$-image of any cycle in either $X$ or $Y$ is always $2$, so the construction
of \lemref{cone11-directions} implies that, by assigning the same direction $\vec v$ to every
edge in $X$, every vertex in any realization lies on a single line through the origin in the direction of $\vec v$.
Similarly for edges in $Y$ assigned a direction $\vec w$ different than $\vec v$.

Since every vertex is at the intersection of two skew lines through the origin, the proposition is proved.
\eop

\subsubsection{Proof of \propref{cone-collapse} for rotations of order $k\ge 3$}\seclab{cone-collapse-ge3}
Using  \propref{cone-22-decomp}, fix a decomposition of the cone-$(2,2)$ graph $(G,\bgamma)$ into
two cone-$(1,1)$ graphs $X$ and $Y$ and choose base vertices for the connected components of $X$ and $Y$.
This determines an overlap graph $D$.  Since we deal with them independently of $X$ and $Y$, we define
$G_i$ to be the connected components of $X$ and $Y$, and recall that these partition the edges of $G$. We denote
the base vertex of $G_i$ by $b_i$. Since we use subscripts to denote the subgraph of the base vertex, in this section
we will use the notation $\gamma \cdot \tilde{b}_i$ as $\gamma$ ranges over $\Z/k\Z$ for the fiber
over vertex $b_i$.

\paragraph{Assigning directions}
For each $G_i$, select a unit vector $\vec v_i$ such that:
\begin{itemize}
\item For any $i, j$, we have $\vec v_i \neq R^\gamma_k \vec v_j$ for all $\gamma \in \Z/k\Z$.
\item For all choices $k_i$, the vectors $\vec w_i = R^{k_i}_k \vec v_i$ are generic in the sense of \propref{cone-genericity}.
\end{itemize}

Now, for each $G_i$ we assign directions as prescribed by \lemref{cone11-directions} where
$\vec v_i$ is the vector input into the lemma. This is
well-defined, since the $G_i$ partition the edges of $G$.  (They clearly overlap on the vertices--we will exploit
this fact below---but it does not prevent us from assigning edge directions independently.)

We define the resulting colored direction network to be $(G,\bgamma,\vec d)$ and the lifted
cone direction network $(\tilde{G},\varphi,\tilde{\vec d})$.  We
also define, as a convenience, the rotation $S_i$ to be $R_k^\gamma$ where $\gamma$ is the
$\rho$-image of the unique cycle in $G_i$.

\paragraph{Local structure of realizations}
Let $G_i$ and $G_j$ be distinct connected cone-$(1,1)$ components and suppose
that there is a directed edge $b_ib_j$ in the overlap graph $D$.  We have the following
relationship between $\vec p_{\tilde{b}_i}$ and $\vec p_{\tilde{b}_j}$ in realizations of
$(\tilde{G},\varphi,\tilde{\vec d})$.
\begin{lemma}\lemlab{cone-local-structure}
Let $\tilde{G}(\vec p)$ be a realization of the cone direction network $(\tilde{G},\varphi,\tilde{\vec d})$
defined above.  Let vertices $b_i$ and $b_j$ in $V(G)$ be the base vertices of $G_i$ and $G_j$, and
suppose that $b_ib_j$ is a directed edge in the overlap graph $D$.  Let $\gamma\cdot \tilde b_i$
be some vertex in the fiber of $b_i$. Then for some $\gamma'$, we have
$\vec p_{\gamma'\cdot \tilde b_j} = T(R^{\gamma'}_k \vec v_i, R^\gamma_k\vec v_j, S_i)\cdot \vec p_{\gamma \cdot\tilde b_i}$
\end{lemma}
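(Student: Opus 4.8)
The plan is to track a single base vertex $b_i$ through the gadget structure imposed by \lemref{cone11-directions}, and show that the direction assignment on $G_j$ together with the overlap edge $b_ib_j$ forces $\vec p_{\gamma' \cdot \tilde b_j}$ to be the image of $\vec p_{\gamma \cdot \tilde b_i}$ under the projection $T(R^{\gamma'}_k \vec v_i, R^{\gamma}_k \vec v_j, S_i)$. The key structural fact is that the edge $b_i b_j$ in the overlap graph $D$ means, by definition, that $b_j$ is a vertex of the component $G_i$ (here $G_i$ is a cone-$(1,1)$ component with base vertex $b_i$ and input vector $\vec v_i$). So $b_j$ is one of the ``rest of the points'' controlled by the second bullet of \lemref{cone11-directions} applied to $G_i$.

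\medskip
\noindent\textbf{The two geometric constraints.} First I would invoke \lemref{cone11-directions} for the component $G_i$. Its second bullet says that in any realization, every non-base vertex of $G_i$ lies on a line $\ell(R^{\gamma'}_k \vec v_i^*, \vec p_{\gamma' \cdot \tilde b_i})$ for the appropriate lift index $\gamma'$, where $\vec v_i^* = (R_k^{\delta/2}\vec v_i)^\perp$ and $\delta$ is the $\rho$-image of the cycle in $G_i$ (so $S_i = R_k^\delta$ in the notation introduced just before the lemma). In particular, the lift $\vec p_{\gamma' \cdot \tilde b_j}$ lies on the line through $\vec p_{\gamma' \cdot \tilde b_i}$ in direction $\vec v_i^*$. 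Matching this with the definition of $T$: the projection $T(\vec v, \vec w, R)$ sends $\ell(\vec v,0)$ to $\ell(\vec w,0)$ along the direction $\vec v^* = R^{1/2}\vec v^\perp$, which is exactly $\vec v_i^*$ when $\vec v = R^{\gamma'}_k \vec v_i$ and $R = S_i$. Second, I would invoke the first bullet of \lemref{cone11-directions} applied to $G_j$ (whose base vertex is $b_j$, input vector $\vec v_j$): it forces $\vec p_{\gamma' \cdot \tilde b_j}$ to lie on the line $\ell(R^{\gamma'}_k \vec v_j, 0)$ through the origin. Thus $\vec p_{\gamma' \cdot \tilde b_j}$ lies simultaneously on the origin-line in direction $R^{\gamma'}_k \vec v_j$ and on the projection line through $\vec p_{\gamma \cdot \tilde b_i}$ in direction $\vec v_i^*$, which is precisely the statement that it equals $T(R^{\gamma'}_k \vec v_i, R^{\gamma'}_k \vec v_j, S_i) \cdot \vec p_{\gamma \cdot \tilde b_i}$.

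\medskip
\noindent\textbf{Reconciling the lift indices.} The one subtlety, which I expect to be the main obstacle, is bookkeeping the rotation indices correctly. The base vertex $b_i$ lies on $\ell(R^{\gamma'}_k \vec v_i, 0)$ only for the specific lift index matching $\gamma'$, and I must verify that the lift of $b_j$ appearing in $G_i$'s realization sits over the \emph{same} index $\gamma'$ as the one the statement's $\vec p_{\gamma' \cdot \tilde b_j}$ refers to—this is a consequence of the symmetry equation \eqref{dn-realization2} together with how the colored quotient $(G,\bgamma)$ determines the lift, and why the statement only asserts the relation ``for some $\gamma'$''. I would carefully check that starting from the given lift $\gamma \cdot \tilde b_i$ of $b_i$, the copy of $b_j$ inside the corresponding translate of the tree $\tilde T$ (in the notation of the proof of \lemref{cone11-directions}) sits over a well-defined index $\gamma'$ determined by the path in $G_i$ from $b_i$ to $b_j$ and the colors along it. Once this index is pinned down, the two line-incidences above are exactly the domain-line and image-line of $T(R^{\gamma'}_k \vec v_i, R^{\gamma'}_k \vec v_j, S_i)$, and applying the definition of that projection to the point $\vec p_{\gamma \cdot \tilde b_i}\in \ell(R^{\gamma}_k\vec v_i,0)$ yields the claimed identity.

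\medskip
\noindent\textbf{Domain/image sanity.} Finally, I would note that the map $T(R^{\gamma'}_k \vec v_i, R^{\gamma'}_k \vec v_j, S_i)$ is genuinely defined here: by \lemref{Tvw-not-zero} this requires $\vec v_i^*$ not parallel to $R^{\gamma'}_k \vec v_j$, which is arranged by the genericity condition $\vec v_i \neq R^\gamma_k \vec v_j$ built into the direction assignment in \secref{cone-collapse-ge3}. This ensures the intersection of the two lines is a single well-defined point, so the projection is single-valued and the identity is exact rather than merely up to the line's freedom.
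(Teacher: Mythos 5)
Your route is the same as the paper's: apply \lemref{cone11-directions} twice---the second bullet for $G_i$, putting the relevant lift of $b_j$ on a line through a lift of $b_i$ in a rotated $\vec v_i^*$ direction, and the first bullet for $G_j$, putting that lift of $b_j$ on a line through the origin---and identify the intersection point as the image under the projection $T$. But the lemma's entire content is the index bookkeeping that you yourself flag as ``the main obstacle,'' and your resolution of it is wrong rather than merely deferred. Concretely: if $\delta$ denotes the $\rho$-image of the path in $G_i$ from $b_i$ to $b_j$, then the lift of $b_j$ lying on the line through $\vec p_{\gamma\cdot\tilde b_i}$ is the one with index $\gamma'=\gamma+\delta$, and there is no reason to have $\delta=0$; so your repeated same-index pairings (``$\vec p_{\gamma'\cdot\tilde b_j}$ lies on the line through $\vec p_{\gamma'\cdot\tilde b_i}$,'' and the final map $T(R^{\gamma'}_k\vec v_i,R^{\gamma'}_k\vec v_j,S_i)$) fail in general. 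Moreover, the point you apply the map to, $\vec p_{\gamma\cdot\tilde b_i}$, lies on $\ell(R^{\gamma}_k\vec v_i,0)$, not on your proposed domain line $\ell(R^{\gamma'}_k\vec v_i,0)$, and the line through $\vec p_{\gamma\cdot\tilde b_i}$ carrying the lift of $b_j$ has direction $R^{\gamma}_k\vec v_i^*$, not $\vec v_i^*$: your claim that $(R^{\gamma'}_k\vec v_i)^*$ ``is exactly $\vec v_i^*$'' is a computational error, since rotations commute and $(R^{\gamma'}_k\vec v_i)^* = R^{\gamma'}_k\vec v_i^*$. What the two incidences actually give, with $\gamma'=\gamma+\delta$, is $\vec p_{\gamma'\cdot\tilde b_j}=T(R^{\gamma}_k\vec v_i,R^{\gamma'}_k\vec v_j,S_i)\cdot\vec p_{\gamma\cdot\tilde b_i}$: domain line indexed by the given $\gamma$, image line by the induced $\gamma'$.

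In fairness, the printed statement is itself inconsistent with the paper's own proof---it transposes $\gamma$ and $\gamma'$ relative to the incidences derived there, and the caption of \figref{cone-local-structure-proof}, where the rotation sits on the image vector in $T(\vec v_1,R^2_4\vec v_2,S_1)$, confirms that the proof's indexing is the intended one---so ``matching the statement'' is a slightly moving target; still, your $(\gamma',\gamma')$ version matches neither, and the distinction is not cosmetic. In \lemref{cone-cycle-vertices} these maps are composed along a directed cycle of the overlap graph, and the composition is legitimate precisely because the image line of each step, $\ell(R^{\gamma'}_k\vec v_j,0)$, is literally the domain line of the next; with your indexing the chain breaks at the first link, so \propref{cone-genericity} could not be invoked. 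A smaller conflation: definedness of $T$ requires the projection direction $R^{\gamma}_k\vec v_i^*$ to be non-parallel to the image direction $R^{\gamma'}_k\vec v_j$, which is a different condition from the one you cite, $\vec v_i\neq R^{\gamma}_k\vec v_j$ (that condition makes the domain and image lines distinct); both are arranged by the genericity of the direction assignment, but they should be kept separate.
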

The proof is illustrated in \figref{cone-local-structure-proof}.

\begin{figure}[htbp]
\labellist
\pinlabel $\vec v_1$ at 280 330
\pinlabel $\vec v_2$ at 215 230
\pinlabel $\vec p_{\tilde{b}_1}$ at 430 400
\pinlabel $\vec p_{\tilde{b}_2}$ at 220 130
\endlabellist
\centering
\includegraphics[width=.4\textwidth]{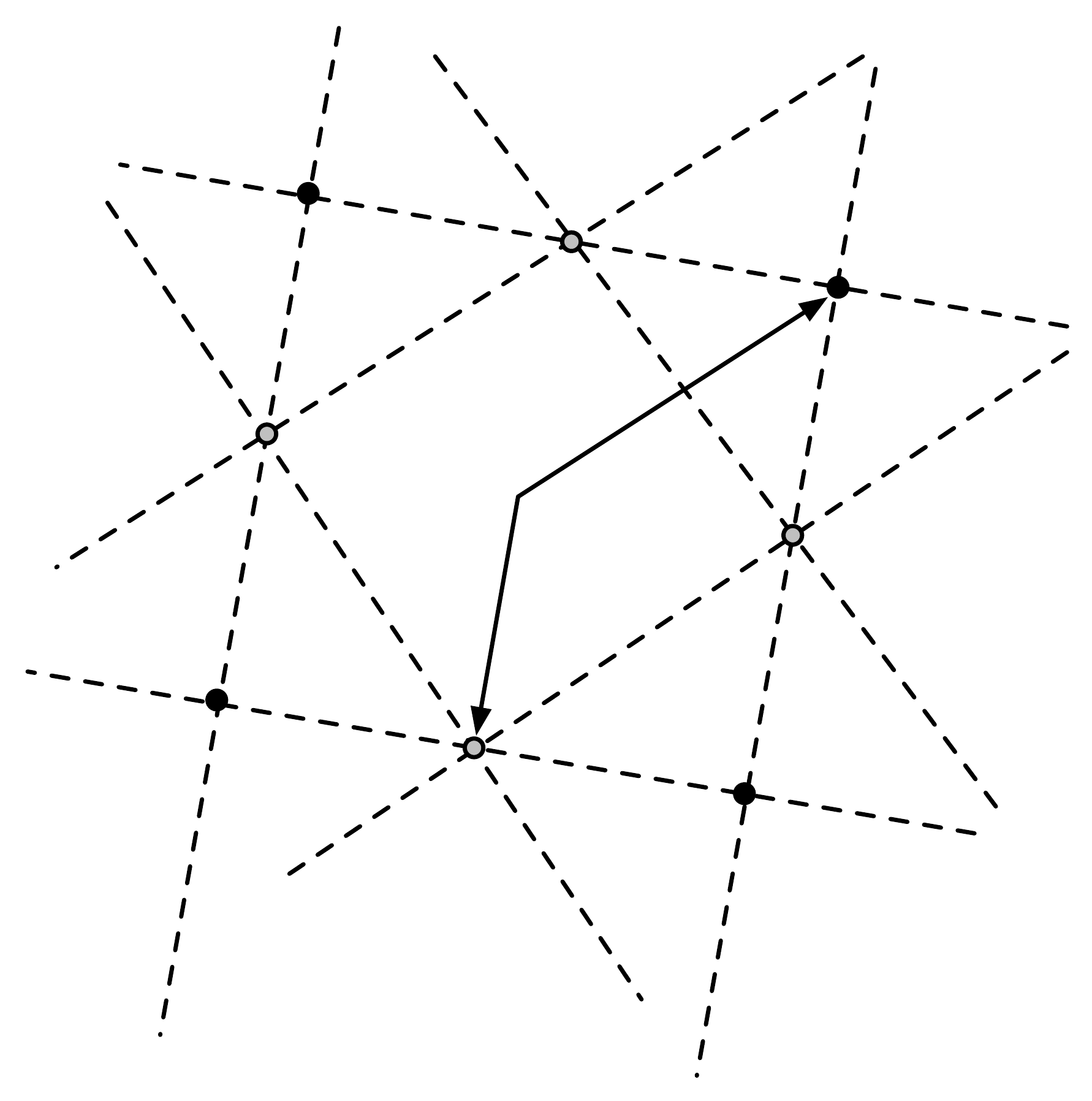}
\caption{\small
Example of the local structure of the proof of \propref{cone-collapse}; in this example, $R^2_4 \vec p_{\tilde{b}_2}$
is the image of $p_{\tilde{b}_1}$ via the projection $T(\vec v_1, R^2_4 \vec v_2, S_1)$. \normalsize
}
\label{fig:cone-local-structure-proof}
\end{figure}

\begin{proof}
By \lemref{cone11-directions}, the vertex $\vec p_{\gamma \cdot\tilde b_i}$ lies on the line $\ell(R^\gamma_k \vec v, 0)$, and
$\vec p_{\gamma'\cdot \tilde b_j} $ lies on $\ell(R^\gamma_k \vec v^*, \vec p_{\gamma \cdot\tilde b_i})$ for some $\gamma' \in \Z/k\Z$
since the vertex $b_j$ lies in the map-graph $G_i$. By \lemref{cone11-directions} applied to $G_j$, the vertex $\vec p_{\gamma' \cdot\tilde b_j}$
lies on the line $\ell(R^{\gamma'}_k \vec v, 0)$. This is exactly the situation captured by the map $T(R^{\gamma'}_k \vec v_i, R^\gamma_k\vec v_j, S_i)$.

\end{proof}

\paragraph{Base vertices on cycles in $D$ must be at the origin}
Let $b_i$ be the base vertex in $G_i$ that is also on a directed cycle in $D$.  The next step in the
proof is to show that all representatives in
$b_i$ must be mapped to the origin in any realization of $(\tilde{G},\varphi,\tilde{\vec d})$.
\begin{lemma}\lemlab{cone-cycle-vertices}
Let $\tilde{G}(\vec p)$ be a realization of the cone direction network $(\tilde{G},\varphi,\tilde{\vec d})$
defined above, and let $b_i\in V(G)$ be a base vertex that is also on a directed cycle in $D$ (one exists
by \propref{overlap-cycle}).  Then all vertices in the fiber over $b_i$ must be mapped to the origin.
\end{lemma}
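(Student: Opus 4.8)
The plan is to walk around a directed cycle of the overlap graph through $b_i$, composing the projections supplied by \lemref{cone-local-structure}, and then to use the symmetry of the realization to close the loop into a single scalar equation that forces the entire fiber over $b_i$ onto the origin.

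First I would use \propref{overlap-cycle} to fix a directed cycle $b_i = b_{i_1} \to b_{i_2} \to \cdots \to b_{i_r} \to b_{i_1}$ in $D$ through $b_i$ (the hypothesis is exactly that $b_i$ lies on such a cycle). Recall from \lemref{cone11-directions} that every representative $\vec p_{\gamma\cdot\tilde b_{i_t}}$ lies on a line $\ell(R_k^{\gamma}\vec v_{i_t},0)$ through the origin. For each directed edge $b_{i_t} b_{i_{t+1}}$, \lemref{cone-local-structure} produces a linear projection $T_t$ of the form $T(\cdot,\cdot,S_{i_t})$ between two such lines, with the property that a suitable representative over $b_{i_{t+1}}$ is $T_t$ applied to a representative over $b_{i_t}$. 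Chaining these around the cycle, starting from $\vec p_{0\cdot\tilde b_{i_1}} = C\vec v_{i_1}$ (an unknown scalar multiple of $\vec v_{i_1}$), I arrive back at some representative $\vec p_{\gamma'\cdot\tilde b_{i_1}} = (T_r \circ \cdots \circ T_1)\cdot\vec p_{0\cdot\tilde b_{i_1}}$ over $b_{i_1}$, where the intermediate lines are spanned by rotated copies $\vec w_t = R_k^{\delta_t}\vec v_{i_t}$ of the chosen direction vectors, with $\vec w_1 = \vec v_{i_1}$.

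Now comes the closing step. The symmetry condition \eqref{dn-realization2} gives, independently of the projections, that $\vec p_{\gamma'\cdot\tilde b_{i_1}} = R_k^{\gamma'}\vec p_{0\cdot\tilde b_{i_1}} = C\,R_k^{\gamma'}\vec v_{i_1}$, so both descriptions land on the line $\ell(R_k^{\gamma'}\vec v_{i_1},0)$. Writing $\lambda$ for the product of the scale factors of $T_1,\ldots,T_r$ (each computed by \lemref{scalefactor-poly}), the composition sends $C\vec v_{i_1}$ to $C\lambda\,R_k^{\gamma'}\vec v_{i_1}$; equating this with the symmetric value yields the scalar relation $C(\lambda - 1) = 0$. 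If instead one of the $T_t$ is identically zero, the composition is the zero map and the same comparison forces $C = 0$ directly.

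It then remains to rule out $\lambda = 1$. Here I would invoke \propref{cone-genericity} for the rotated vectors $\vec w_1,\ldots,\vec w_r$: these are precisely the configurations covered by the second genericity condition imposed when the directions were assigned, so generically $\lambda \neq 1$. Hence $C = 0$, that is $\vec p_{0\cdot\tilde b_{i_1}} = \vec 0$, and the symmetry relation $\vec p_{\gamma\cdot\tilde b_{i_1}} = R_k^{\gamma}\vec p_{0\cdot\tilde b_{i_1}}$ propagates this to every representative in the fiber over $b_i$. The step I expect to be the main obstacle is the bookkeeping in the closing argument: one must track the rotation indices $\delta_t$ accumulated around the cycle and verify that, after absorbing the symmetry rotation $R_k^{\gamma'}$ that appears because the loop closes on a rotated fiber representative rather than on $\vec p_{0\cdot\tilde b_{i_1}}$ itself, the resulting scale factor is exactly the product to which \propref{cone-genericity} applies.
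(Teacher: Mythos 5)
Your proposal is correct and takes essentially the same route as the paper's proof: iterate \lemref{cone-local-structure} around a directed cycle of the overlap graph, observe that the composed projection satisfies the hypotheses of \propref{cone-genericity} (exactly because the rotated vectors $R_k^{\delta_t}\vec v_{i_t}$ were covered by the second genericity condition in the direction assignment), and conclude from the scale factor together with $R_k$-symmetry that the fiber collapses to the origin. Your explicit closing equation $C(\lambda - 1) = 0$, including the careful absorption of the rotation $R_k^{\gamma'}$, is just a more detailed rendering of the paper's phrasing that a nonzero fiber point would be carried to another fiber point at a different distance from the origin, contradicting symmetry.
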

\begin{proof}
Iterated application of \lemref{cone-local-structure} along the cycle in $D$ containing $b_i$ tells
us that any vertex in the fiber over $b_i$ is related to another vertex in the same fiber by a linear
map meeting the hypothesis of \propref{cone-genericity}.  This implies that if any vertex in the
fiber over $b_i$ is mapped to a point not the origin, some other vertex in the same fiber is mapped to a
point at a different distance to the origin.  This is a contradiction, since all realizations $\tilde{G}(\vec p)$
are symmetric with respect to $R_k$, so, in fact the fiber over $b_i$ was mapped to the origin.
\end{proof}

\paragraph{All base vertices must be at the origin}
So far we have shown that every base vertex $b_i$ that is on a directed cycle in the overlap graph $D$
is mapped to the origin in any realization $\tilde{G}(\vec p)$ of $(\tilde{G},\varphi,\tilde{\vec d})$.
However, since every base vertex is connected to the cycle in its connected component by a directed path in $D$,
we can show all the base vertices are at the origin.

\begin{lemma}\lemlab{cone-non-cycle-vertices}
Let $\tilde{G}(\vec p)$ be a realization of the cone direction network $(\tilde{G},\varphi,\tilde{\vec d})$
defined above.  Then all vertices in the fiber over $b_i$ must be mapped to the origin.
\end{lemma}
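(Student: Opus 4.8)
The plan is to propagate the conclusion of \lemref{cone-cycle-vertices}, that base vertices on a directed cycle of the overlap graph $D$ are forced to the origin, outward along directed paths until it reaches every base vertex. Recall from the proof of \propref{overlap-cycle} that every vertex of $D$ has in-degree exactly one. Hence, starting at an arbitrary base vertex $b_i$ and repeatedly following the unique incoming edge backward, I trace a walk in $D$ that, because $D$ is finite, must eventually enter the directed cycle of the connected component containing $b_i$. Read in the forward direction, this walk is a directed path in $D$ from some base vertex $b_j$ lying on the cycle to $b_i$.

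The next step is to show that ``being collapsed to the origin'' propagates forward across each directed edge of this path. Suppose $b_s b_t$ is a directed edge of $D$ and the entire fiber over $b_s$ is mapped to the origin in a realization $\tilde G(\vec p)$. Applying \lemref{cone-local-structure} with any $\gamma \in \Z/k\Z$ produces some $\gamma'$ with $\vec p_{\gamma' \cdot \tilde b_t} = T(R^{\gamma'}_k \vec v_s, R^\gamma_k \vec v_t, S_s) \cdot \vec p_{\gamma \cdot \tilde b_s}$; since $T$ is linear it fixes the origin, so $\vec p_{\gamma \cdot \tilde b_s} = 0$ gives $\vec p_{\gamma' \cdot \tilde b_t} = 0$. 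Because every realization is $R_k$-symmetric, the vanishing of a single representative in the fiber over $b_t$ forces the whole fiber over $b_t$ to the origin.

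Combining these, by \lemref{cone-cycle-vertices} the fiber over the cycle vertex $b_j$ is at the origin, and applying the propagation step once per edge along the directed path from $b_j$ to $b_i$ shows, by induction on the length of the path, that the fiber over each successive base vertex---and in particular over $b_i$---is at the origin. As $b_i$ was an arbitrary base vertex, every base vertex is collapsed, proving the lemma.

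I expect the only delicate point to be the fiber bookkeeping: \lemref{cone-local-structure} relates just one representative $\gamma \cdot \tilde b_s$ to one representative $\gamma' \cdot \tilde b_t$, so the argument genuinely relies on the $R_k$-symmetry of realizations to upgrade ``one representative at the origin'' to ``the entire fiber at the origin'' at each edge of the path. No further genericity is needed, since linearity alone guarantees the origin is a fixed point of each map $T(\cdots)$.
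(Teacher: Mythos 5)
Your proof is correct and takes essentially the same approach as the paper's: propagate the conclusion of \lemref{cone-cycle-vertices} forward along a directed path from the cycle, using the fact that the linear maps $T(\cdots)$ from \lemref{cone-local-structure} fix the origin. You simply make explicit two points the paper leaves implicit---the existence of the directed path (via the in-degree-one property of the overlap graph) and the use of $R_k$-symmetry to upgrade one collapsed fiber representative to the whole fiber.
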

\begin{proof}
The statement is already proved for base vertices on a directed cycle in \lemref{cone-cycle-vertices}.
Any base vertex not on a directed cycle, say $b_i$, is at the end of a directed path which starts
at a vertex on the directed cycle.  Thus $\vec p_{\gamma \cdot b_i}$ is the image of $0$ under
some linear map, and hence is at the origin.
\end{proof}

\paragraph{All vertices must be at the origin}
The proof of \propref{cone-collapse} then follows from the observation that, if all the base vertices $b_i$
must be mapped to the origin in $\tilde{G}(\vec p)$, then \lemref{cone11-directions} implies that \emph{every}
vertex in the lift of $G_i$ lies on a family of $k$ lines intersecting at the origin.
Since every vertex is in the span of two of the $G_i$, and these families of lines
intersect only at the origin, we are done: $\tilde{G}(\vec p)$ must put all the points at the origin.
\eop

\section{Infinitesimal rigidity of cone frameworks} \seclab{cone-laman-proof}
The generic rigidity of a cone framework $(\tilde{G}, \varphi, \tilde{\ell})$
is a property of the underlying colored graph. To see this, note that we can identify the
realization space $\mathcal{R}(\tilde{G},\varphi,\tilde{\ell})$ with the solutions to the
following system, defined via the associated colored graph $(G, \bgamma)$:
\begin{eqnarray} \eqlab{cone-cont-rig}
||R^{\gamma_{ij}}_k \vec p_j - \vec p_i ||^2 = \ell_{ij}^2 & \text{for all edges $ij\in E(G)$}.
\end{eqnarray}
Here $\ell_{ij}$ is equal to the length of any lift of the edge $ij$ which, by symmetry,
is independent of the lift. We call the resulting object a \emph{colored framework}
$(G, \bgamma, \ell)$ on the quotient graph, and denote its realization space
by $\mathcal{R}(G, \bgamma, \ell)$.  Since the two spaces have the same dimension,
for any choice $V\subset V(\tilde{G})$ of vertex-orbit representatives in $\tilde{G}$,
the projection onto the set $\left(\vec p_i\right)_{i\in V}$ induces an algebraic isomorphism.

Let $G$ have $n$ vertices.  Computing the formal differential of \eqref{cone-cont-rig}, we see that a vector
$\vec v \in \R^{2n}$ is an infinitesimal motion if and only if
\begin{eqnarray} \eqlab{cone-inf-rig}
\langle R^{\gamma_{ij}}_k \vec v_j - \vec v_i, R^{\gamma_{ij}}_k \vec p_j - \vec p_i \rangle =
0 & \text{for all edges $ij\in E(G)$}.
\end{eqnarray}
We define $(G, \bgamma, \ell)$ to be {\it infinitesimally rigid} if the system \eqref{cone-inf-rig}
has rank $2n-1$. It is easy to see that infinitesimal rigidity of $(G, \bgamma, \ell)$ coincides
with that of the lift $(\tilde{G}, \varphi, \tilde{\ell})$.

\subsection{Framework genericity} \seclab{genericity2}
Our approach to genericity for cone frameworks is a small extension of the one for
finite frameworks from \cite{ST10}.  Let $(G,\bgamma,\ell)$ be a colored framework.  A
realization $G(\vec p)$ is generic if the rank of \eqref{cone-inf-rig} is maximum
among all the realizations.  Thus, the
non-generic subset of the colored realization space $\mathcal{R}(G,\bgamma,\ell)$ are
simply those realizations for which the (complexification of) the
system \eqref{cone-inf-rig} does not attain its maximal rank.  This is cut out by the minors
of the matrix form of \eqref{cone-inf-rig}, and so clearly algebraic.  Since the natural
homeomorphism $\mathcal{R}(\tilde{G}, \varphi, \tilde{\ell})\to \mathcal{R}(G, \bgamma, \ell)$,
is an algebraic map, the pre-image in $\mathcal{R}(\tilde{G}, \varphi, \tilde{\ell})$
of the non-generic subset of $\mathcal{R}(G, \bgamma, \ell)$ is also algebraic.

\subsection{Proof of \theoref{cone-laman}}
We prove necessity by inspecting \eqref{cone-inf-rig} and then sufficiency with
\theoref{cone-direction-network}.

\paragraph{The Maxwell direction}
Suppose that $(G, \bgamma)$ is a colored graph on $m = 2n-1$ edges
that is not cone-Laman-sparse. Thus $G$ contains (at least) one of two possible types of
cone-Laman-circuits which we call $G'$: a cone-$(2,2)$ graph or a
$(2,2)$-graph with trivial $\rho$-image. In the former case, the subgraph has $n$ vertices and $2n$ edges,
and since any framework has a trivial motion arising from rotation,
the system \eqref{cone-inf-rig} has a dependency.

In the latter case, by \lemref{uncolored-subgraph}, we may assume the edges are colored by $0$,
in which case the system \eqref{cone-inf-rig} is identical to the well-known rigidity matrix
for finite frameworks.  Since $G'$ is not $(2,3)$-sparse, the Maxwell-Laman Theorem provides
a dependency in \eqref{cone-inf-rig}. \eop

\paragraph{The Laman direction}
\theoref{cone-direction-network} implies that \eqref{colored-system} has, generically,
rank $2n-1$ if $(G,\bgamma)$ is cone-Laman.  This next proposition says that \eqref{cone-inf-rig}
has the same rank.
\begin{prop}\proplab{same-rank}
Let $G(\vec p)$ be a realization of a cone framework with colored graph $(G,\bgamma)$.
If $G(\vec p)$ is faithful and solves the direction network $(G, \bgamma , \vec d)$,
then the system \eqref{cone-inf-rig} for $G(\vec p)$ has the same rank as \eqref{colored-system} for
$(G, \bgamma, \vec d)$.
\end{prop}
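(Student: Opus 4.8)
The plan is to compare the two linear systems directly by exhibiting a linear isomorphism between their solution spaces, or equivalently by showing that the coefficient matrix of \eqref{cone-inf-rig} and that of \eqref{colored-system} have identical kernels (hence identical rank, since both are $m \times 2n$ matrices with the same $m$ and $n$). The key observation is that both systems consist of one scalar equation per edge $ij \in E(G)$, built from the same rotation data $R_k^{\gamma_{ij}}$, so it suffices to match up equations edge-by-edge and relate the direction vector $\vec d_{ij}$ to the geometry of the realization $G(\vec p)$.

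First I would set up the correspondence coming from the hypothesis that $G(\vec p)$ is faithful and solves the direction network $(G,\bgamma,\vec d)$. By \eqref{colored-system}, solving the direction network means $\iprod{R_k^{\gamma_{ij}} \vec p_j - \vec p_i}{\vec d_{ij}} = 0$ for every edge; faithfulness means $R_k^{\gamma_{ij}} \vec p_j - \vec p_i \neq 0$ for each edge (no edge has coincident endpoints in the lift). Consequently the nonzero edge vector $R_k^{\gamma_{ij}} \vec p_j - \vec p_i$ is orthogonal to $\vec d_{ij}$, so it is parallel to $\vec d_{ij}^\perp$; write $R_k^{\gamma_{ij}} \vec p_j - \vec p_i = c_{ij} \vec d_{ij}^\perp$ for some nonzero scalar $c_{ij}$. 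Now rewrite the infinitesimal rigidity equation \eqref{cone-inf-rig}: its $ij$ row is $\iprod{R_k^{\gamma_{ij}} \vec v_j - \vec v_i}{R_k^{\gamma_{ij}} \vec p_j - \vec p_i} = 0$, which substituting the edge vector becomes $c_{ij} \iprod{R_k^{\gamma_{ij}} \vec v_j - \vec v_i}{\vec d_{ij}^\perp} = 0$. Since $c_{ij} \neq 0$, this is equivalent to $\iprod{R_k^{\gamma_{ij}} \vec v_j - \vec v_i}{\vec d_{ij}^\perp} = 0$. But that is precisely the $ij$ row of the direction-network system \eqref{colored-system} with the unknowns named $\vec v$ instead of $\vec p$ (recall \eqref{colored-system} uses $\vec d_{ij}$ and \eqref{dn-realization1} uses $\vec d_{ij}^\perp$; the colored system as written already encodes the perpendicular via the orthogonality constraint). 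Thus each row of \eqref{cone-inf-rig} is a nonzero scalar multiple of the corresponding row of the direction network system evaluated on the same variable vector.

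The conclusion then follows immediately: scaling each row of a matrix by a nonzero scalar does not change its rank, and the row-by-row identification shows \eqref{cone-inf-rig} for $G(\vec p)$ is obtained from the matrix of \eqref{colored-system} for $(G,\bgamma,\vec d)$ by left-multiplication by the invertible diagonal matrix $\operatorname{diag}(c_{ij})$. Hence the two matrices have the same rank, as claimed. The main thing to get right is the faithfulness step: without it, some $c_{ij}$ could vanish, the edge vector could be zero, and the row of \eqref{cone-inf-rig} would degenerate to the trivial equation rather than a nonzero multiple of the direction-network row, breaking the rank comparison. I expect the only genuine subtlety to be bookkeeping the $\perp$ convention so that \eqref{cone-inf-rig} matches \eqref{colored-system} rather than its orthogonal complement; once the edge vector $R_k^{\gamma_{ij}}\vec p_j - \vec p_i$ is recognized as parallel to $\vec d_{ij}^\perp$, everything lines up.
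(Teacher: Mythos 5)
Your first half coincides with the paper's: use faithfulness to write each edge vector as a nonzero multiple of a fixed vector and factor the scalar out of the corresponding row of \eqref{cone-inf-rig}. The proof breaks at the last step, where you claim that $\iprod{R_k^{\gamma_{ij}}\vec v_j - \vec v_i}{\vec d_{ij}^\perp} = 0$ ``is precisely the $ij$ row of \eqref{colored-system}.'' Under your own reading of the hypothesis (the realization satisfies \eqref{colored-system}, so edge vectors are parallel to $\vec d_{ij}^\perp$), the system you arrive at after cancelling the $c_{ij}$ is \eqref{colored-system} for $(G,\bgamma,\vec d^\perp)$, not for $(G,\bgamma,\vec d)$: its rows pair $R_k^{\gamma_{ij}}\vec v_j - \vec v_i$ against $\vec d_{ij}^\perp$, whereas the rows of \eqref{colored-system} pair it against $\vec d_{ij}$, and these two vectors are perpendicular, not proportional. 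Hence no diagonal matrix $\operatorname{diag}(c_{ij})$ relates the two coefficient matrices, and your final left-multiplication claim is false as stated. (If you instead adopt the convention the paper uses in its proof, namely that solving $(G,\bgamma,\vec d)$ means $R_k^{\gamma_{ij}}\vec p_j - \vec p_i = \alpha_{ij}\vec d_{ij}$ as in \eqref{dn-realization1}, the mismatch simply moves: your substitution of the edge vector is then wrong by a perp, and the same discrepancy reappears.)

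The passage between the $\vec d$ system and the $\vec d^\perp$ system is exactly what you may not dismiss as ``bookkeeping the $\perp$ convention''; it is the mathematical content of the proposition. The paper closes this gap with the substitution $\vec v \mapsto R_{\pi/2}\vec v$, computing that $\iprod{R_k^{\gamma_{ij}}\vec v_j - \vec v_i}{\vec d_{ij}} = 0$ if and only if $\iprod{R_k^{\gamma_{ij}} R_{\pi/2}\vec v_j - R_{\pi/2}\vec v_i}{R_{\pi/2}\vec d_{ij}} = 0$, which gives a linear isomorphism of kernels, hence equality of ranks, between the systems with directions $\vec d$ and $\vec d^\perp$. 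That computation is valid only because $R_{\pi/2}$ commutes with the rotations $R_k^{\gamma_{ij}}$. The remark immediately following the paper's proof stresses that this commutation is the critical point: for reflection frameworks, where the group elements do not commute with $R_{\pi/2}$, the ranks of the $\vec d$ and $\vec d^\perp$ systems can genuinely differ---this is precisely why Section 5 has to introduce special pairs rather than reuse this argument. Your proof, as written, silently assumes the $\vec d \leftrightarrow \vec d^\perp$ exchange is rank-neutral, i.e., it assumes the conclusion at the one step where the rotation hypothesis does real work. The repair is short: insert the explicit conjugation by $R_{\pi/2}$ and note where commutativity with $R_k^{\gamma_{ij}}$ is invoked.
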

\begin{proof}
Let $R_{\pi/2}$ be the counter-clockwise rotation through angle $\pi/2$. If $G(\vec p)$ faithfully
solves $(G, \bgamma, \vec d)$, then $R_k^{\gamma_{ij}} \vec p_j - \vec p_i = \alpha \vec d_{ij}$. A vector $\vec q$
is therefore an infinitesimal motion if and only if $\vec q^\perp$ is a solution to \eqref{colored-system} by this
computation:
\begin{eqnarray*}
\langle R^{\gamma_{ij}}_k \vec q_j - \vec q_i, \vec d_{ij} \rangle  =   0 & \iff \\
\langle R_{\pi/2} (R^{\gamma_{ij}}_k \vec q_j - \vec q_i), R_{\pi/2} \vec d_{ij} \rangle  =  0 & \iff \\
\langle  R^{\gamma_{ij}}_k R_{\pi/2} \vec q_j - R_{\pi/2} \vec q_i, R_{\pi/2} \vec d_{ij} \rangle  =  0 & \,
\end{eqnarray*}
Thus, \eqref{cone-inf-rig} and \eqref{colored-system} have isomorphic solution spaces
and hence the same rank.
\end{proof}

\paragraph{Remark}
That $R_{\pi/2}$ and $R^{\gamma_{ij}}_k$ commute is critical in the computation used to prove
\propref{same-rank}. The corresponding argument for reflection
frameworks would fail since $R^{\gamma_{ij}}_k$ would be replaced by a reflection which does not commute with
$R_{\pi/2}$. In fact, as we will see, the ranks of the two systems can be different in the reflection case.

\subsection{Rigidity for non-free actions} \seclab{rotnonfree}
We briefly remark here on symmetric frameworks with fixed vertices or inverted edges. If there
is an edge $\tilde{i} \tilde{j} \in \tilde{G}$ and a group element $\gamma \in \Gamma$ such that
$\gamma \cdot \tilde{i} \tilde{j} = \tilde{j} \tilde{i}$, then $\tilde{i}$ and $\tilde{j}$ descend to the
same vertex, say $i$, in the quotient. The inverted edge, in terms of rigidity, forces $\vec p_i$
and $R^\gamma \cdot \vec p_i$ to remain at a constant distance. This is the same constraint
as if there were a self-loop at $i$ with color $\gamma$ in the quotient graph $(G, \bgamma)$.
Thus, for every inverted edge, we simply put such a self-loop in the quotient graph and appeal
to \theoref{cone-laman}.

If there is a fixed vertex, say $\tilde{i}$, then in any realization it lies at the origin. (We may assume
without loss of generality there is only one fixed vertex.) Suppose $\tilde{j}$, and consequently
its $\Gamma$-orbit, are connected to $\tilde{i}$ by some edge (orbit). As a symmetric framework,
this forces $\vec p_{j}$ and its $\Gamma$-orbit to be the vertices of a regular $k$-gon with fixed distance
from the origin. This same constraint can be enforced by deleting the edge orbit to $\tilde{i}$ and
replacing it with the edges of the regular polygon. (See \figref{fixvert}.) Thus, we reduce the problem again to
the case of a free $\Gamma$-action.

\begin{figure}[htbp]
\centering
\includegraphics[width=.6\textwidth]{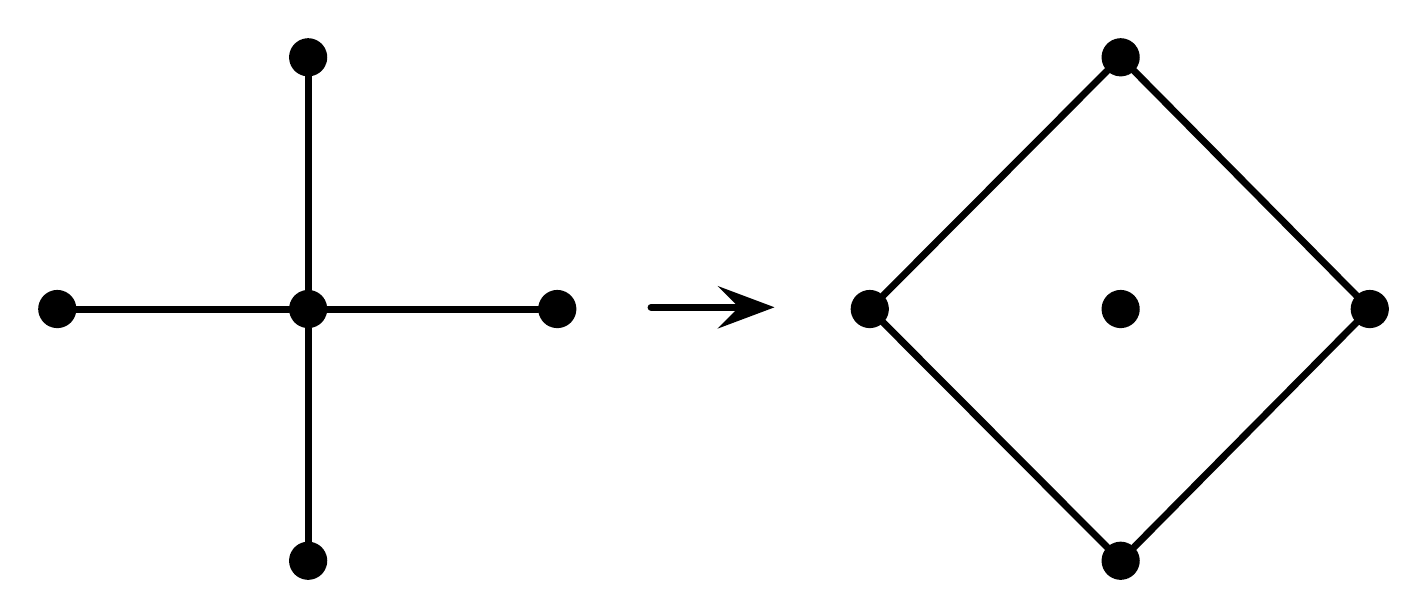}
\caption{\small Operation removing an edge orbit to a fixed vertex.   \normalsize }
\label{fig:fixvert}
\end{figure}

\section{Special pairs of reflection direction networks} \seclab{direction-network}
We recall, from the introduction, that for reflection direction networks, $\Z/2\Z$ acts on the plane
by some reflection through the origin. It is clear that we can reduce to the case where
the reflection is through the $y$-axis, and we make this assumption for the remainder of this section.

\subsection{Direction networks on Ross graphs}
We first characterize the colored graphs for which generic direction networks have
strongly faithful realizations.  A realization is \emph{strongly faithful} if no
two vertices lie on top of each other.  This is a stronger condition than simply being
faithful which only requires that edges not be collapsed.
\begin{prop}\proplab{ross-realizations}
A generic direction network  $(\tilde{G},\varphi,\vec d)$
has a unique, up to (vertical) translation and scaling, strongly faithful realization
if and only if its associated colored graph is a Ross graph.
\end{prop}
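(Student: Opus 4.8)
The plan is to prove \propref{ross-realizations} as an equivalence, establishing a tight dimension count on the realization space of a generic reflection direction network and showing that faithful realizations exist precisely when the Ross sparsity counts \eqref{ross} hold. First I would set up the colored realization system analogous to \eqref{colored-system}: writing $M$ for the reflection through the $y$-axis, a realization is determined by one point $\vec p_i$ per quotient vertex subject to $\iprod{M^{\gamma_{ij}}\vec p_j - \vec p_i}{\vec d_{ij}^\perp}=0$ for each edge $ij\in E(G)$. This is a linear system with $2n$ unknowns and $m = 2n-2$ equations, so the key quantity is the rank of this $m\times 2n$ matrix at generic directions. The statement that generic strongly faithful realizations are unique up to vertical translation and scaling is equivalent to saying the solution space is generically exactly $2$-dimensional, spanned by the global scaling vector $(\vec p_i)$ and the vertical translation $\vec p_i \mapsto \vec p_i + (0,t)$, the latter surviving because $M$ fixes the $y$-axis.

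The heart of the argument is a matroidal rank computation. For the forward direction, I would show that if $(G,\bgamma)$ fails the Ross count \eqref{ross} on some subgraph $G'$, then the corresponding subsystem has a nontrivial solution forcing collapsed edges, so no faithful realization exists; the two failure modes parallel those in \secref{cone-direction-network-proof}, namely a subgraph that is a reflection/cone-$(2,2)$ graph (where \propref{cone-collapse} or its reflection analogue forces collapse) and a $(2,2)$-block with trivial $\rho$-image (handled by \lemref{uncolored-subgraph} reducing to the ordinary Parallel Redrawing Theorem as in \cite[Theorem 3]{ST10}). For the converse, the plan is to decompose the Ross graph using the decomposition results of \secref{matroid}: a Ross graph's underlying graph is a $(2,2)$-graph, and I expect to build up the realization space inductively or via the Edmonds--Rota/Matroid Union framework, showing the system achieves its maximal rank $2n-2$ at generic directions. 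The quotient dimension $2n - (2n-2) = 2$ then matches exactly the trivial solutions, and one verifies these trivial solutions (scaling and vertical translation) are genuinely present and independent.

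The main obstacle will be establishing that generic maximal rank actually produces a \emph{strongly faithful} realization rather than merely a faithful one, and ruling out accidental vertex coincidences. Faithfulness (no collapsed edge) follows from the sparsity counts via the circuit analysis above, but strong faithfulness requires that distinct non-adjacent vertices also stay apart; this is a stronger genericity argument, and I would handle it by showing that the set of directions forcing any given pair of vertices to coincide is a proper algebraic subset, then taking a finite intersection over all vertex pairs. A subtle point specific to the reflection setting is that $M$, being orientation-reversing, does not commute with $R_{\pi/2}$, so the clean duality exploited in \propref{same-rank} is unavailable; here, however, this does not obstruct the direction-network analysis itself, since \propref{ross-realizations} is a statement purely about the solution space of the linear system and the incompatibility only matters later when passing to infinitesimal rigidity. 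The remaining care is in confirming that the one-dimensional space of vertical translations is exactly the correct trivial motion, which follows because reflection through the $y$-axis commutes with vertical translation but not horizontal translation, so only the vertical direction survives the symmetry constraint \eqref{dn-realization2}.
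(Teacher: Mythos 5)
Your plan has the right top-level shape (exactly $2$-dimensional generic solution space, trivial solutions being scaling and vertical translation, circuit analysis for the only-if direction), but the two pivotal steps are deferred to machinery that does not deliver them. First, the generic rank bound: you propose to get maximal rank $2n-2$ ``via the Edmonds--Rota/Matroid Union framework'' or induction, but this is precisely what fails here --- the paper notes explicitly that standard linear-representation results for matroid unions do not apply to the colored system \eqref{colored-system}, which is why \propref{cone-collapse} is proved by a direct geometric construction. The missing ingredient is \propref{reflection-22-collapse} (a generic direction network on a reflection-$(2,2)$ graph has only collapsed realizations), and this is not a routine ``reflection analogue'' of \propref{cone-collapse}: the cone proof's gadgets (the projections $T(\vec v,\vec w,R)$, scale factors, overlap graph) use rotations essentially, while the reflection case needs a different construction --- an identity-colored spanning tree (\lemref{uncolored-subgraph}, \propref{reflection-22-nice-decomp}) given one generic direction, and the \emph{vertical} direction, which is invariant under the reflection, on the reflection-$(1,1)$ part, forcing collapse onto the axis. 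Once that is proved, the rank $2n-2$ for a Ross graph follows simply by deleting one equation from the full-rank reflection-$(2,2)$ system; your proposal contains no substitute for this step.

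Second, strong faithfulness: you correctly identify it as the main obstacle, but asserting that ``the set of directions forcing a given pair to coincide is a proper algebraic subset'' is exactly the thing that must be certified, and you give no mechanism. The paper's certificate is an edge-addition trick: since the solution space is $2$-dimensional and contains the collapsed vertical-translation solution, two lifted vertices either coincide in \emph{every} solution or only in translations; if a pair coincided always, then adding the corresponding edge (any color, $i\neq j$) or a self-loop colored $1$ --- which yields a reflection-$(2,2)$ graph by \propref{ross-adding} --- would leave the dimension at $2$ for \emph{any} direction on the new edge, contradicting \propref{reflection-22-collapse} for directions chosen generic simultaneously for the finitely many extensions. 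Note that the self-loop case is what rules out coincidence of the two lifts $\tilde{i}_0,\tilde{i}_1$ of a single quotient vertex (a vertex stuck on the mirror axis); your intersection ``over all vertex pairs,'' taken in the quotient, misses this degeneracy entirely. Finally, in the only-if direction you fix $m=2n-2$ at the outset, so the case $m<2n-2$ (solution space of dimension greater than $2$, hence no uniqueness) needs its own one-line dimension count, as in the paper.
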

To prove \propref{ross-realizations} we expand upon the method from \cite[Sections 17--18]{MT10},
and use the following proposition.
\begin{prop}\proplab{reflection-22-collapse}
Let $(G,\bgamma)$ be a reflection-$(2,2)$ graph.  Then a generic direction network on the
symmetric lift $(\tilde{G},\varphi)$ of $(G,\bgamma)$ has only collapsed realizations.
\end{prop}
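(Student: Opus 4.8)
The plan mirrors the proof of \propref{cone-collapse}. The reflection analogue of the colored realization system \eqref{colored-system} (with the rotation $R_k^{\gamma_{ij}}$ replaced by the reflection) has the $n$ points $\vec p_i$ as unknowns, i.e. $2n$ real coordinates, subject to $2n-1$ linear equations; as in the cone case, its maximum rank over all direction assignments is a generic property. Moreover, placing every $\vec p_i$ at a single common point of the reflection axis (which, following the section's convention, I take to be the $y$-axis) always produces a $1$-dimensional space of solutions, so the rank can never exceed $2n-1$. Hence it suffices to exhibit one direction assignment whose solution space is exactly this $1$-dimensional collapsed family: its rank is then the maximal value $2n-1$, so the generic rank is also $2n-1$, and the generic solution space, being $1$-dimensional and containing the collapsed family, must equal it.

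To build such an assignment, I would first decompose $(G,\bgamma)$ by \lemref{reflection-22-decomp} into a spanning tree $T$ and a reflection-$(1,1)$ graph $X$; recall that $X$ is a map-graph spanning all $n$ vertices and that every connected component of $X$ has a cycle whose $\rho$-image is the nontrivial element of $\Z/2\Z$. Writing $\vec p_i=(x_i,y_i)$ and letting $s$ denote the reflection, acting by $(x,y)\mapsto(-x,y)$, I would choose directions so that for every edge $ij$ of $X$ the constraint reads ``$s^{\gamma_{ij}}\vec p_j-\vec p_i$ is vertical'' and for every edge of $T$ it reads ``the corresponding displacement is horizontal''. For an edge of $X$ this says $x_j=x_i$ when $\gamma_{ij}=0$ and $x_j=-x_i$ when $\gamma_{ij}=1$. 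Propagating these relations along a spanning tree of a component $X_c$ expresses every $x_i$ as $\pm x_b$ for a fixed base vertex $b$; traversing the cycle of $X_c$ and using that it contains an odd number of color-$1$ edges (its $\rho$-image is nontrivial) forces $x_b=-x_b$, hence $x_b=0$ and therefore $x_i=0$ throughout $X_c$. Since $X$ spans, every vertex then lies on the $y$-axis.

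With all vertices on the axis, a tree edge $ij$ satisfies $s^{\gamma_{ij}}\vec p_j-\vec p_i=(0,y_j-y_i)$ irrespective of its color, because the reflection fixes second coordinates; the ``horizontal displacement'' constraint then forces $y_j=y_i$, and, as $T$ is connected and spanning, all the $y_i$ coincide, so the realization is collapsed. Thus the realization space of this particular direction network is precisely the $1$-dimensional family of collapsed-on-axis configurations, which by the first paragraph completes the argument. The genuinely delicate point—and the reflection-specific feature absent from \propref{cone-collapse}—is this parity computation on the cycles of $X$: since the reflection is an involution whose fixed set is the whole $y$-axis, the singular matrix $s-I$ only pins a vertex to the axis rather than to a single point, so the collapse onto the axis must be extracted entirely from the odd monodromy of the reflection-$(1,1)$ components, while the final collapse to one point must be furnished separately by the spanning tree. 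The only other thing to verify carefully is that this deliberately non-generic choice of directions attains the maximal rank, which is exactly what the genericity reduction in the first paragraph is designed to certify.
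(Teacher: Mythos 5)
Your proof is correct and takes essentially the same approach as the paper: decompose via \lemref{reflection-22-decomp} into a spanning tree and a reflection-$(1,1)$ graph, put vertical directions on the latter, a fixed direction on the tree, show the solution space is exactly the $1$-dimensional collapsed family, and conclude by genericity of maximal rank. The remaining differences are cosmetic: the paper first passes to an equivalent coloring with the tree colored trivially (\propref{reflection-22-nice-decomp}) and argues in the lift, where your cycle-parity computation is exactly the statement that each component of $X$ lifts to a connected graph (\propref{reflection-laman-decomp-lift}), and it assigns the tree a direction that is neither horizontal nor vertical where you use horizontal---both choices work.
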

Since the proof of \propref{reflection-22-collapse} requires a detailed construction,
we first show how it implies \propref{ross-realizations}.
\subsection{Proof that \propref{reflection-22-collapse} implies \propref{ross-realizations}}
Let $(G,\bgamma)$ be a Ross graph, and assign directions $\vec d$ to the edges of $G$ such that,
for any extension $(G+ij,\bgamma)$ of $(G,\bgamma)$ to a reflection-$(2,2)$ graph as
in \propref{ross-adding}, $\vec d$ can be extended to a set of directions that is generic
in the sense of \propref{reflection-22-collapse}.  This is possible because there are a finite
number of such extensions.

For this choice of $\vec d$, the realization space of the direction network $(\tilde{G},\varphi,\vec d)$
is $2$-dimensional.  Since solutions to \eqref{colored-system} may be scaled or translated in the vertical
direction, all solutions to $(\tilde{G},\varphi,\vec d)$ are related by scaling and translation.  It
then follows that a pair of vertices in the fibers over $i$ and $j$ are either distinct from each other
in all non-zero solutions to \eqref{colored-system} or always coincide.  In the latter case, adding the
edge $ij$ with any direction does not change the dimension of the solution space, no matter
what direction we assign to it.  It then follows that the solution spaces of generic
direction networks on $(\tilde{G},\varphi,\vec d)$ and $(\widetilde{G+ij},\varphi,\vec d)$ have
the same dimension, which is a contradiction by \propref{reflection-22-collapse}.

For the opposite direction, suppose $(G, \bgamma)$ is not a Ross-graph. A proof similar to that in
\secref{cone-direction-network-proof} applies. If $m < 2n-2$, then dimension counting
tells us that the space of realizations cannot be unique up to translation and scaling.
If $m \geq 2n-2$, then $(G, \bgamma)$ has one of two types of circuits, either a
reflection-$(2,2)$-subgraph or a $(2,2)$-subgraph with trivial $\rho$-image.
In the former case, we are done by \propref{reflection-22-collapse}, and in the latter
case, the same proof from \secref{cone-direction-network-proof} applies.
\eop

\subsection{Proof of \propref{reflection-22-collapse}}
Let $(G,\bgamma)$ be a reflection-$(2,2)$ graph associated
to $(\tilde{G},\varphi)$.  It is sufficient to construct a set of
directions $\vec d$ such that the direction network $(\tilde{G},\varphi,\vec d)$
has only collapsed realizations.  In the rest of the proof, we
construct a set of directions $\vec d$ and then verify that the colored direction
network $(G,\bgamma,\vec d)$ has only collapsed solutions.  The proposition then
follows from the equivalence of colored direction networks with reflection
direction networks.

\paragraph{Combinatorial decomposition}
We apply \propref{reflection-22-nice-decomp} to decompose $(G,\bgamma)$ into a
spanning tree $T$ with all colors the identity and a reflection-$(1,1)$ graph $X$.  For
now, we further assume that $X$ is connected.

\paragraph{Assigning directions}
Let $\vec v$ be a direction vector that is not
horizontal or vertical.  For each edge $ij\in T$, set $\vec d_{ij} = \vec v$.
Assign all the edges of $X$ the vertical
direction.  Denote by $\vec d$ this assignment of directions.
\begin{figure}[htbp]
\labellist
\pinlabel $1$ at 70 170
\pinlabel $2$ at 125 150
\pinlabel $3$ at 90 240
\pinlabel $4$ at 20 140
\pinlabel $\vec p_{\tilde{1}}$ at 205  300
\pinlabel $\vec p_{\tilde{2}}$ at 245 260
\pinlabel $\vec p_{\tilde{3}}$ at 400 110
\pinlabel $\vec p_{\tilde{4}}$ at 460 50
\tiny
\pinlabel $0$ at 80 205 %
\pinlabel $0$ at 45 165 %
\pinlabel $0$ at 88 165 %
\pinlabel $1$ at 50 210 %
\pinlabel $1$ at 100 195 %
\pinlabel $1$ at 75 145 %
\pinlabel $1$ at 8 200 %
\normalsize
\endlabellist
\centering
\includegraphics[width=.6\textwidth]{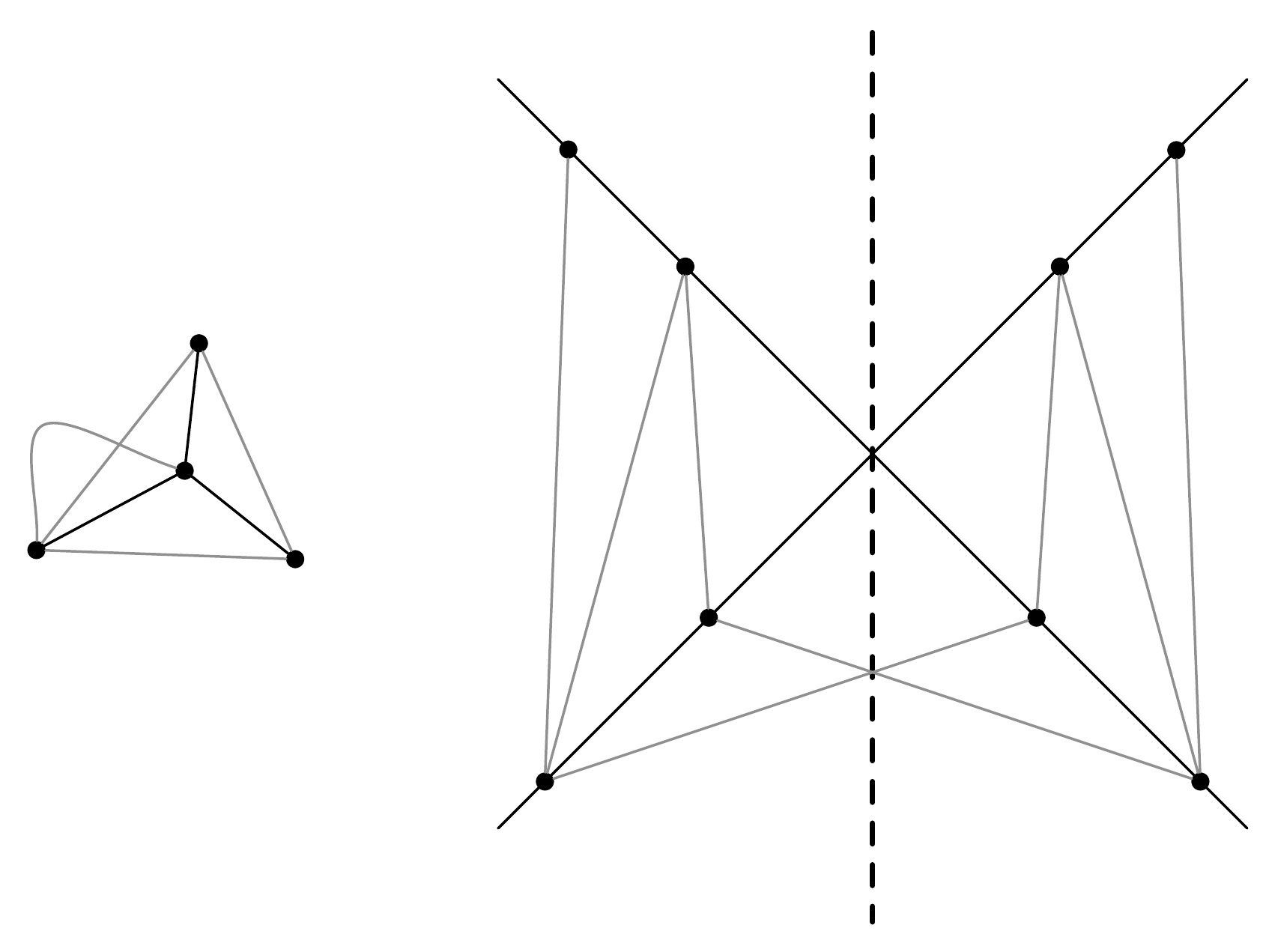}
\caption{\small Schematic for proof of \propref{reflection-22-collapse} and \lemref{RC-proof-1}: the $y$-axis
is shown as a dashed line. The corresponding colored graph is depicted on the left
with the tree indicated by black lines and the reflection-$(1,1)$ graph indicated by the gray lines.
The right-hand figure indicates a realization where only the directions on the tree
are enforced. If the gray lines are then forced to be vertical, the entire framework collapses
to the $y$-axis. If the gray lines are forced to be horizontal, the framework takes on the form in
\figref{ross-circuit-special-pair}. \normalsize }
\label{fig:ref-22-collapse}
\end{figure}
\paragraph{All realizations are collapsed}
We now show that the only realizations of $(\tilde{G},\varphi,\vec d)$ have
all vertices on top of each other.  By \propref{reflection-laman-decomp-lift},
$T$ lifts to two copies of itself, in $\tilde{G}$.  It then follows from
the connectivity of $T$ and the construction of $\vec d$ that, in any
realization, there is a line $L$ with direction $\vec v$ such that every vertex
of $\tilde{G}$ must lie on $L$ or its reflection.  Since the vertical direction
is preserved by reflection, the connectivity of the lift of $X$, again from
\propref{reflection-laman-decomp-lift}, implies that every vertex of $\tilde{G}$
lies on a single vertical line, which must be the $y$-axis by reflection symmetry.

Thus, in any realization of $(\tilde{G},\varphi,\vec d)$ all the vertices lie
at the intersection of $L$, the reflection of $L$
through the $y$-axis and the $y$-axis itself.  This is a single point, as
desired.  \figref{ref-22-collapse} shows a schematic of this argument.

\paragraph{$X$ does not need to be connected}
Finally, we can remove the assumption that $X$  was connected by repeating the argument
for each connected component of $X$ separately.
\eop

\subsection{Special pairs for Ross-circuits}
The full \theoref{reflection-direction-network} will reduce to the case of a Ross-circuit.
\begin{prop}\proplab{ross-circuit-pairs}
Let $(G,\bgamma)$ be a Ross circuit with lift $(\tilde{G},\varphi)$.  Then there
is an edge $i'j'$ with nonzero color such that, for a generic direction network $(\tilde{G'},\varphi,\vec d)$ with colored graph
$(G-i'j',\bgamma)$:
\begin{itemize}
\item For all faithful realizations of $(\tilde{G'},\varphi,\vec d)$, we have that
$\vec p_{\tilde{j}'_1} - \vec p_{\tilde{i}'_0}$ is a non-zero
vector with direction independent of the realization.
In particular, $(\tilde{G'},\varphi,\vec d)$ induces a well-defined direction on the edge
$i'j'$, which extends to an assignment of directions to the edges of $G$.
\item The direction networks $(\tilde{G},\varphi,\vec d)$
and $(\tilde{G},\varphi,(\vec d)^\perp)$ are a special pair.
\end{itemize}
\end{prop}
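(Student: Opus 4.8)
The plan is to reduce the proposition to a single non-degeneracy statement about two induced directions and to settle that statement by an explicit construction. Since $(G,\bgamma)$ is a Ross-circuit, its $\rho$-image is nontrivial, so it has an edge $i'j'$ of nonzero color, and by definition $G' := G - i'j'$ is a Ross graph. First I would fix a generic assignment $\vec d$ of directions to $G'$ and invoke \propref{ross-realizations}: the network $(\tilde{G'},\varphi,\vec d)$ has a strongly faithful realization $\vec p^*$, unique up to scaling and vertical translation. The $2$-dimensional solution space is $\{\lambda\vec p^* + \mu\vec c\}$, where $\vec c$ is the collapsed vertical generator (all points at a common point of the $y$-axis); a realization is faithful exactly when $\lambda\neq 0$, and is then strongly faithful. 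Consequently $\vec p^*_{\tilde{j}'_1} - \vec p^*_{\tilde{i}'_0}$ is nonzero, and in any faithful realization the corresponding difference is $\lambda(\vec p^*_{\tilde{j}'_1} - \vec p^*_{\tilde{i}'_0})$ (the $\mu\vec c$-term contributes nothing), so its direction $\vec a$ is independent of the realization. This establishes the first bullet, and I extend $\vec d$ to $G$ by setting $\vec d_{i'j'}$ along $\vec a$.

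With this choice, every realization of $(\tilde{G'},\varphi,\vec d)$ already satisfies the constraint of $i'j'$, so $(\tilde{G},\varphi,\vec d)$ has the same $2$-dimensional faithful realization space — exactly the first requirement for a special pair. It then remains to show that $(\tilde{G},\varphi,\vec d^\perp)$ has only collapsed realizations. Applying the same reasoning to $\vec d^\perp$ (also generic on $G'$) yields a faithful realization $\vec q^*$ and an induced direction $\vec b$ of $\vec q^*_{\tilde{j}'_1} - \vec q^*_{\tilde{i}'_0}$. The realizations of $(\tilde{G},\varphi,\vec d^\perp)$ are precisely those of $(\tilde{G'},\varphi,\vec d^\perp)$ whose $i'j'$-difference is parallel to $\vec d^\perp_{i'j'} = \vec a^\perp$; since on the $2$-dimensional solution space that difference is always a multiple of $\vec b$, the faithful ($\lambda\neq 0$) solutions survive if and only if $\vec b \parallel \vec a^\perp$. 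Hence $(\tilde{G},\varphi,\vec d^\perp)$ collapses if and only if $\iprod{\vec a}{\vec b}\neq 0$, and the whole proposition reduces to this non-perpendicularity of the two induced directions.

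The main obstacle is to prove $\iprod{\vec a}{\vec b}\neq 0$ for generic $\vec d$. As $\vec a$ and $\vec b$ depend algebraically on $\vec d$, this is a Zariski-open condition, so it suffices to exhibit a single configuration where it holds. I would organize the example around the decomposition $G = T \cup X$ of \propref{reflection-22-nice-decomp}, with $T$ a spanning tree colored by the identity and $X$ a reflection-$(1,1)$ graph, choosing $i'j'$ on a cycle of $X$. The base case is immediate: if $i'j'$ is a color-$1$ self-loop, its lift $\tilde{i}'_0\tilde{i}'_1$ joins a point to its reflection across the $y$-axis, so the difference is horizontal in every realization of either network; thus $\vec a$ and $\vec b$ are both horizontal and $\iprod{\vec a}{\vec b}\neq 0$ outright. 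The general case $i'\neq j'$ is the crux: here I would place the tree along a generic line and use \propref{reflection-laman-decomp-lift} to track how the reflection across the $y$-axis, together with the single cross-sheet edge $i'j'$, pins down the perpendicular network. The subtlety to respect is that the symmetric directions of \propref{reflection-22-collapse} (tree slanted, $X$ axis-aligned) make \emph{both} $\vec d$ and $\vec d^\perp$ collapse, so the example cannot align $X$ with an axis; it must genuinely exploit the circuit structure, which is where the asymmetry between $\vec d$ and $\vec d^\perp$ — present, as the remark after \propref{same-rank} explains, only because reflections fail to commute with rotation by $\pi/2$ — is made quantitative.

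I expect this last step to be the technical heart, the remainder being bookkeeping: removing the assumption that $X$ is connected by running the collapse argument componentwise, and checking the degenerate single-vertex Ross-circuit separately (where $G'$ has no edges and $\vec a$ is automatically horizontal). Once $\iprod{\vec a}{\vec b}\neq 0$ is secured on a nonempty open set, genericity of $\vec d$ delivers both bullets simultaneously.
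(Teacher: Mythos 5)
Your first two paragraphs are sound: the reduction of the proposition to the single non-vanishing condition $\iprod{\vec a}{\vec b}\neq 0$, where $\vec a$ and $\vec b$ are the directions induced on $i'j'$ by generic networks on the Ross graph $G'$ with directions $\vec d$ and $\vec d^\perp$, is correct, and it is a clean repackaging of what the paper does by perturbation (the paper builds a degenerate witness with properties (a), (b), (c) and then perturbs to a generic $\vec d$, invoking \propref{ross-realizations} for faithfulness and rank-stability for the rest; your version works at generic $\vec d$ from the start and isolates one polynomial condition). Your handling of the two-dimensional solution space as $\lambda\vec p^* + \mu\vec c$, the observation that the collapsed vertical generator contributes nothing to the $i'j'$-difference, and the degenerate single-vertex self-loop circuit are all fine.

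The genuine gap is that you never produce the witness configuration, and this is not bookkeeping---it is the entire geometric content of the paper's proof. Worse, the heuristic you offer would steer you away from the construction that works: you assert that taking the tree slanted and $X$ axis-aligned ``makes \emph{both} $\vec d$ and $\vec d^\perp$ collapse,'' but this is false, and the asymmetry here is exactly the point. The paper assigns a direction $\vec v$ (neither horizontal nor vertical) to the spanning tree $T$ and the \emph{horizontal} direction to $X$, with $i'j'$ chosen on the cycle of $X$ with $\gamma_{i'j'}\neq 0$. Because horizontal lines are preserved by the reflection, the two-line gadget of \propref{reflection-22-collapse} degenerates: all of sheet $0$ lands at a single point $p$ on the line $L$ and all of sheet $1$ at its mirror image, so the realization space is $2$-dimensional and the fiber over $i'j'$ is \emph{not} collapsed (its difference vector is horizontal and generically nonzero)---this is Lemmas \lemrefX{RC-proof-1} through \lemrefX{RC-proof-5}. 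Meanwhile $\vec d^\perp$ (tree turned, $X$ vertical) is exactly the collapsing assignment of \propref{reflection-22-collapse}, so the perpendicular network has full rank with only collapsed solutions. At this configuration both networks on $G'$ have rank $2n-2$, $\vec a$ is horizontal, and the collapse of $(\tilde{G},\varphi,\vec d^\perp)$ forces $\iprod{\vec a}{\vec b}\neq 0$, which is precisely the witness your Zariski-openness argument requires; note also that this dictates the choice of $i'j'$ (on the cycle of $X$, nonzero color, via \propref{reflection-laman-decomp-lift}), rather than an arbitrary nonzero-colored edge. Finally, when $X$ is disconnected the all-horizontal assignment on the other components must itself be perturbed slightly (as the paper does for the edges $(i'j')_k$, $k\ge 2$) so that $\vec d^\perp$ remains collapsing, so the componentwise repetition you propose also needs this adjustment. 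Without the witness, your argument establishes only that the proposition holds \emph{if} $\iprod{\vec a}{\vec b}$ is not identically zero, which is the statement to be proved.
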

Before giving the proof, we describe the idea.  We are after sets of directions that
lead to faithful realizations of Ross-circuits.  By \propref{reflection-22-collapse},
these directions must be non-generic.  A natural way to obtain such a set of directions
is to discard an edge $ij$ from the colored quotient graph,
apply \propref{ross-realizations} to obtain a generic set of directions $\vec d'$ with a
strongly faithful realization $\tilde{G}'(\vec p)$, and then simply set the directions on the
edges in the fiber over $ij$ to be the difference vectors between the points.

\propref{ross-realizations} tells us that this procedure induces a well-defined
direction for the edge $ij$, allowing us to extend $\vec d$ from $G'$ to $G$ in a controlled way.
However, it does \emph{not} tell us that rank
of $(\tilde{G},\varphi,\vec d)$ will rise when the directions are turned
by angle $\pi/2$, and this seems hard to do directly.  Instead,
we construct a set of directions $\vec d$ so that $(\tilde{G},\varphi,\vec d)$ is rank deficient
and has realizations where $\vec p_i \neq \vec p_j$, and $(\tilde{G},\varphi,\vec d^\perp)$ is generic.
Then we make a perturbation argument to show the existence of a special pair.

The construction we use is, essentially, the one used in the proof of \propref{reflection-22-collapse}
but turned through angle $\pi/2$.  The key geometric insight is that horizontal edge directions
are preserved by the reflection, so the ``gadget'' of a line and its reflection crossing on the $y$-axis, as in
\figref{ref-22-collapse}, degenerates to just a single line.

\subsection{Proof of \propref{ross-circuit-pairs}}
Let $(G,\bgamma)$ be a Ross-circuit; recall that this implies that $(G,\bgamma)$ is a
reflection-Laman graph.

\paragraph{Combinatorial decomposition}
We decompose $(G,\bgamma)$ into a spanning tree $T$ and a
reflection-$(1,1)$ graph $X$ as in \propref{reflection-laman-decomp-lift}.  In particular,
we again have all edges in $T$ colored by the identity.  For now,
we \emph{assume that $X$ is connected}, and we fix $i'j'$ to be an edge that is on the cycle in $X$
with $\gamma_{i'j'}\neq 0$; such an edge must exist by the hypothesis that $X$ is
reflection-$(1,1)$. Let $G' = G\setminus i'j'$.  Furthermore, let $\tilde{T}_0$ and $\tilde{T}_1$
be the two connected components of the lift of $T$.  For a vertex $i \in G$, the lift
$\tilde{i}_j$ lies in $\tilde{T}_j$.  We similarly denote the
lifts of $i'$ and $j'$ by $\tilde{i}_0', \tilde{i}_1'$ and $\tilde{j}_0', \tilde{j}_1'$.

\paragraph{Assigning directions}
The assignment of directions is as follows: to the edges of $T$, we assign a direction
$\vec v$ that is neither vertical nor horizontal.  To the edges of $X$
we assign the horizontal direction.  Define the resulting direction network to be
$(\tilde{G},\varphi,\vec d)$, and the direction network induced on the lift of $G'$ to be
$(\tilde{G'},\varphi,\vec d)$.

\paragraph{The realization space of $(\tilde{G},\varphi,\vec d)$}
\figref{ref-22-collapse} and \figref{ross-circuit-special-pair} contain a schematic picture of the arguments that
follow.
\begin{lemma}\lemlab{RC-proof-1}
The realization space of $(\tilde{G},\varphi,\vec d)$ is $2$-dimensional
and parameterized by exactly one representative in the fiber over the
vertex $i$ selected above.
\end{lemma}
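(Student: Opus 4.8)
The plan is to read the realization space directly off the combinatorial decomposition $G = T\cup X$ and the two families of direction constraints, using the lifted picture from \propref{reflection-laman-decomp-lift}. Throughout I identify a realization with the positions $\vec p_{\tilde{i}_0}$ of the $0$-level representatives, since the $1$-level representatives are then determined by $\vec p_{\tilde{i}_1} = s\cdot \vec p_{\tilde{i}_0}$, where $s$ denotes the reflection across the $y$-axis. The goal is to show that these positions are in turn forced to coincide with the single distinguished representative $\vec p_{\tilde{i}_0}$, and that every choice of it extends to a solution.

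First I would exploit the tree. Since every edge of $T$ is colored by the identity, \propref{reflection-laman-decomp-lift} says $T$ lifts to two disjoint copies and all $0$-level representatives lie in a single component $\tilde{T}_0$. Each tree edge carries direction $\vec v$, so \eqref{dn-realization1} forces the endpoints of each lifted tree edge to differ by a multiple of $\vec v$; by connectivity of $\tilde{T}_0$, in any realization every point $\vec p_{\tilde{i}_0}$ lies on the single line $\ell(\vec v, \vec p_{\tilde{i}_0})$ through the distinguished representative. Second I would use $X$. Every edge of $X$ is assigned the horizontal direction, and $s$ fixes the horizontal direction, so a horizontal edge forces its endpoints to share a $y$-coordinate whether or not it changes level: for a color-$0$ edge this is immediate, and for a color-$1$ edge $\tilde{i}_0\tilde{j}_1$ it holds because $\vec p_{\tilde{j}_1} = s\cdot\vec p_{\tilde{j}_0}$ has the same $y$-coordinate as $\vec p_{\tilde{j}_0}$. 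Since $X$ is connected and, being reflection-$(1,1)$, spans all $n$ vertices, every $0$-level representative then shares the $y$-coordinate of $\vec p_{\tilde{i}_0}$.

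Combining the two steps finishes the argument: because $\vec v$ is neither horizontal nor vertical, the line $\ell(\vec v, \vec p_{\tilde{i}_0})$ meets the horizontal line through $\vec p_{\tilde{i}_0}$ only at $\vec p_{\tilde{i}_0}$ itself, so every $0$-level representative equals $\vec p_{\tilde{i}_0}$. For the converse I would check that any $\vec p_{\tilde{i}_0}\in\R^2$ extends to a genuine solution: place every $0$-level point at $\vec p_{\tilde{i}_0}$ and every $1$-level point at its reflection, and verify each equation of \eqref{dn-realization1}, the within-level tree and color-$0$ edges being satisfied trivially and the color-$1$ edges of $X$ picking up the horizontal difference $\vec p_{\tilde{i}_0}-s\cdot\vec p_{\tilde{i}_0}$. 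Hence the realization space is in bijection with the positions of the single representative $\vec p_{\tilde{i}_0}$, so it is exactly $2$-dimensional and parameterized as claimed.

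The step I expect to be the main obstacle is the bookkeeping for the color-$1$ edges of $X$: one must verify that a horizontal edge crossing between the two levels of the lift imposes \emph{only} the equal-$y$-coordinate condition and pins down nothing about the $x$-coordinate, which is precisely where the horizontal direction (rather than the vertical one used in \propref{reflection-22-collapse}) is essential. Relatedly, I would note that the cycle edge $i'j'$ of $X$ contributes no new constraint, since $X\setminus i'j'$ already spans the quotient graph and hence already forces all $y$-coordinates to agree; this is what makes the realization space of $(\tilde{G},\varphi,\vec d)$ coincide with that of the network on $G'$.
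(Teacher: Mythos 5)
Your proof is correct and follows essentially the same route as the paper's: the tree edges confine all $0$-level representatives to a line in direction $\vec v$, the horizontal directions on the connected reflection-$(1,1)$ graph $X$ force a common $y$-coordinate, and the intersection pins every $0$-level point to $\vec p_{\tilde{i}'_0}$, giving a $2$-dimensional space. Your explicit verification that every choice of $\vec p_{\tilde{i}'_0}$ extends to a solution (and your edge-by-edge handling of the color-$1$ edges via the quotient rather than the connectivity of the lift of $X$) is a harmless refinement of what the paper leaves implicit.
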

\begin{proof}
In a manner similar to
the proof of \propref{reflection-22-collapse}, the directions on the edges of $T$ force every
vertex to lie either on a line $L$ in the direction $\vec v$ or its reflection.  Since the lift
of $X$ is connected, we further conclude that all the vertices lie on a single horizontal line.
Thus, all the points $\vec p_{ \tilde{j}_0}$ are at the intersection of the same horizontal line and $L$
or its reflection. These determine the locations of the $\vec p_{ \tilde{j}_1}$, so the
realization space is parameterized by the location of $\vec p_{ \tilde{i}'_0}$.
\end{proof}
Inspecting the argument more closely, we find that:
\begin{lemma}
In any realization $\tilde{G}(\vec p)$ of $(\tilde{G},\varphi,\vec d)$,
all the $\vec p_{ \tilde{j}_0}$ are equal and all the $\vec p_{ \tilde{j}_1}$ are equal.
\end{lemma}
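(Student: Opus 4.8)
The plan is to sharpen the geometry already established in the proof of \lemref{RC-proof-1} by intersecting the two lines it produces with the single horizontal line coming from $X$. Recall that, by \propref{reflection-laman-decomp-lift}, the spanning tree $T$ --- all of whose edges are colored by the identity --- lifts to two disjoint connected copies $\tilde{T}_0$ and $\tilde{T}_1$, spanning the vertices $\vec p_{\tilde{j}_0}$ and $\vec p_{\tilde{j}_1}$ respectively. Since every edge of $T$ is assigned the direction $\vec v$, the connectivity of $\tilde{T}_0$ forces, in any realization $\tilde{G}(\vec p)$, all of the points $\vec p_{\tilde{j}_0}$ onto a common line $L$ of direction $\vec v$; by the symmetry relation $\vec p_{\tilde{j}_1} = s\cdot\vec p_{\tilde{j}_0}$, where $s$ denotes the reflection through the $y$-axis, the points $\vec p_{\tilde{j}_1}$ all lie on the reflected line $s(L)$.

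Next I would import, verbatim from the proof of \lemref{RC-proof-1}, the conclusion that every vertex of $\tilde{G}$ lies on one common horizontal line $H$: this is precisely the step where the connectivity of the lift of $X$ (\propref{reflection-laman-decomp-lift}), the horizontal directions assigned to the edges of $X$, and the fact that $s$ preserves the horizontal direction are combined. Consequently each $\vec p_{\tilde{j}_0}$ lies in $L\cap H$ and each $\vec p_{\tilde{j}_1}$ lies in $s(L)\cap H$.

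The lemma then follows from a one-line transversality observation. Because $\vec v$ is not horizontal, the line $L$ is not parallel to $H$, so $L\cap H$ is a single point, and therefore all the $\vec p_{\tilde{j}_0}$ coincide. Reflection through the $y$-axis negates only the horizontal component of a vector, so $s\vec v$ is non-horizontal as well; hence $s(L)$ likewise meets $H$ in a single point and all the $\vec p_{\tilde{j}_1}$ coincide. There is essentially no obstacle beyond bookkeeping: the only facts needing (trivial) care are the transversality of $L$ and $s(L)$ to $H$ --- which is exactly where the hypothesis that $\vec v$ is neither vertical nor horizontal enters --- and that $H$ is a single line rather than several, guaranteed by the connectivity of the lift of $X$ from \propref{reflection-laman-decomp-lift}.
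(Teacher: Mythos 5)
Your proof is correct and takes essentially the same route as the paper's: the two disjoint lifted copies of $T$ (from \propref{reflection-laman-decomp-lift}) place all the $\vec p_{\tilde{j}_0}$ on $L$ and, by symmetry, all the $\vec p_{\tilde{j}_1}$ on the reflection of $L$, and intersecting with the single horizontal line already established in \lemref{RC-proof-1} pins each family to one point. The only difference is that you make explicit the transversality bookkeeping (that $\vec v$ and its reflection are non-horizontal) which the paper leaves implicit.
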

\begin{proof}
Because the colors on the edges of $T$ are all zero, it lifts to two copies of itself,
one of which spans the vertex set $\{\tilde{j_0} : j\in V(G)\}$ and one which spans
$\{\tilde{j_1} : j\in V(G)\}$.  It follows that in a realization, we have all the
$\vec p_{ \tilde{j}_0}$ on $L$ and the $\vec p_{ \tilde{j}_1}$ on the
reflection of $L$.
\end{proof}
In particular, because the color $\gamma_{i'j'}$ on the edge $i'j'$ is $1$, we obtain the following.
\begin{lemma}\lemlab{RC-proof-5}
The realization space of $(\tilde{G},\varphi,\vec d)$ contains points
where the fiber over the edge $i'j'$ is not collapsed.
\end{lemma}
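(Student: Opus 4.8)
The plan is to let the two preceding lemmas do all the structural work and reduce the statement to a one-line observation about where a single parameter can be placed. Concretely, I would argue that the realization space is so rigidly constrained that the only way the fiber over $i'j'$ can behave is to connect a point $P_0$ to its reflection $P_1$, and that these are distinct for all but a measure-zero choice of $P_0$.

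First I would recall the lemma just proved: in \emph{any} realization of $(\tilde{G},\varphi,\vec d)$, all the points $\vec p_{\tilde{j}_0}$ coincide at a single point, which I will call $P_0$, and all the points $\vec p_{\tilde{j}_1}$ coincide at a single point $P_1$. In particular $P_0 = \vec p_{\tilde{i}'_0} = \vec p_{\tilde{j}'_0}$ and $P_1 = \vec p_{\tilde{j}'_1}$. Because every realization satisfies the symmetry equation \eqref{dn-realization2} and the nontrivial element of $\Z/2\Z$ acts as the reflection through the $y$-axis, we have $\vec p_{\tilde{j}'_1} = 1\cdot \vec p_{\tilde{j}'_0}$, i.e.\ $P_1$ is exactly the reflection of $P_0$ through the $y$-axis.

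Next, since the chosen edge has color $\gamma_{i'j'}=1$, its fiber contains the lifted edge $\tilde{i}'_0\tilde{j}'_1$, whose endpoints are realized at $P_0$ and $P_1$. This edge is collapsed precisely when $P_0 = P_1$, that is, precisely when $P_0$ lies on the reflection axis. So the task is merely to exhibit one realization in which $P_0$ is off the $y$-axis. By \lemref{RC-proof-1} the realization space is two-dimensional and parameterized freely by the location of $\vec p_{\tilde{i}'_0}=P_0$, so I would simply choose $P_0$ not on the $y$-axis; then $P_1$ is a genuinely distinct reflected point, $\vec p_{\tilde{j}'_1}\neq \vec p_{\tilde{i}'_0}$, and the fiber over $i'j'$ is not collapsed.

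I do not expect any real obstacle here: the content is entirely front-loaded into the structural lemmas, and the reflection-versus-rotation distinction (horizontal directions being preserved by the reflection, so that the color-$1$ edge links a point to its mirror image rather than forcing collapse) is exactly what makes the argument work. The only point deserving explicit mention is that the parameter $P_0$ is genuinely unconstrained, so that the collapsing locus $\{P_0 \in \text{$y$-axis}\}$ is a proper subset that can be avoided; this is precisely the free parameterization guaranteed by \lemref{RC-proof-1}.
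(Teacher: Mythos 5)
Your proof is correct and follows essentially the same route as the paper, which deduces this lemma in one line from the preceding coincidence lemma together with the fact that $\gamma_{i'j'}=1$: the lifted edge joins the common point $P_0$ to its mirror image $P_1$, and the free parameterization by $\vec p_{\tilde{i}'_0}$ from \lemref{RC-proof-1} lets one place $P_0$ off the $y$-axis. Your only addition is to make explicit that the collapsing locus is exactly $\{P_0 \in \text{$y$-axis}\}$, which the paper leaves implicit.
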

\begin{figure}[htbp]
\centering
\includegraphics[width=.3\textwidth]{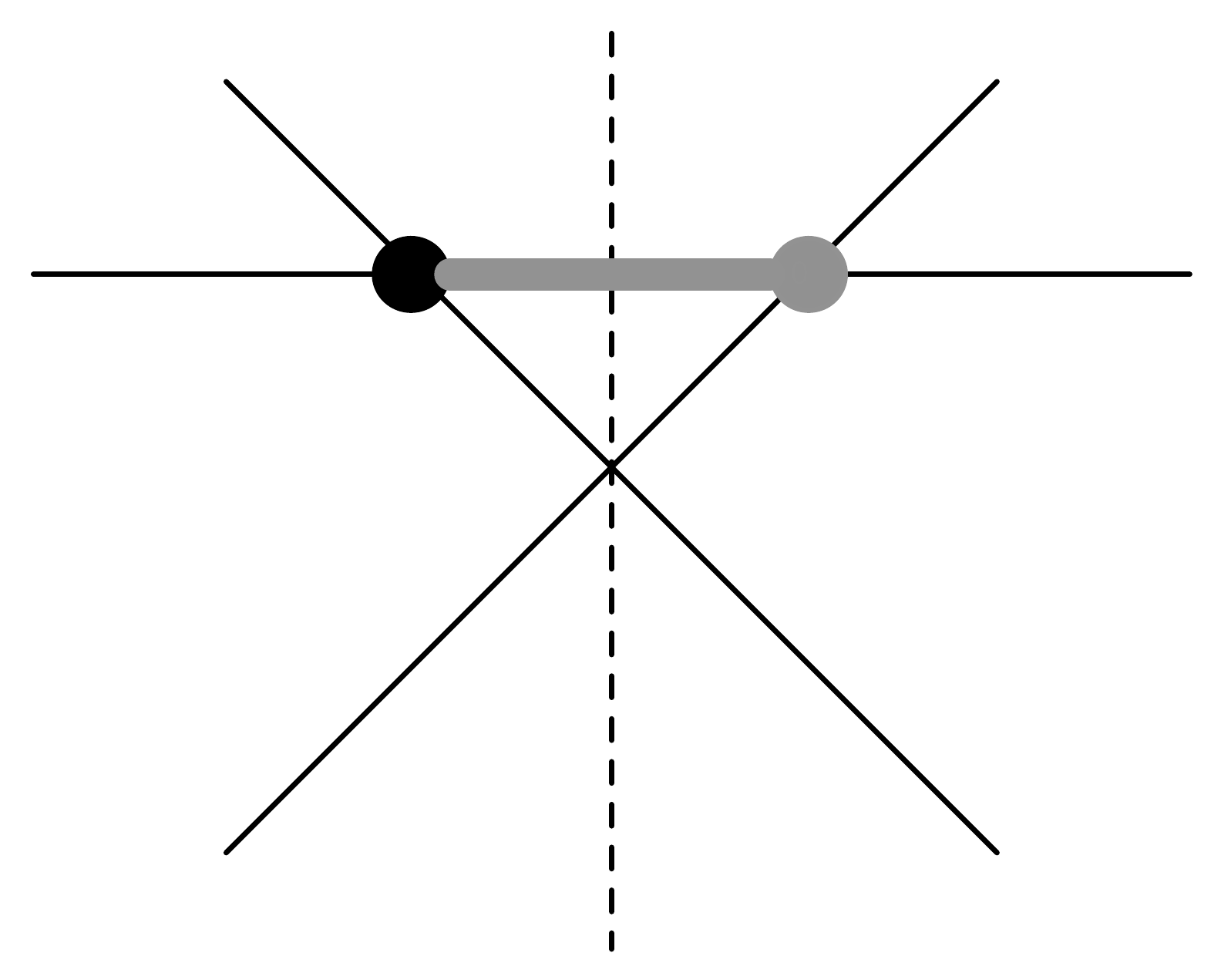}
\caption{\small Schematic of the proof of \propref{ross-circuit-pairs}: the $y$-axis
is shown as a dashed line.  The directions on the edges of the lift of the tree
$T$ force all the vertices to be on one of the two lines meeting at the $y$-axis.
The horizontal directions on the connected reflection-$(1,1)$ graph $X$ force the
point $\vec p_{ \tilde{j}_0}$ to be at the intersection marked by the black dot and
$\vec p_{ \tilde{j}_1}$ to be at the intersection marked by the gray one. The thick gray line
indicates a thick mass of horizontal edges. \normalsize}
\label{fig:ross-circuit-special-pair}
\end{figure}
\paragraph{The realization space of $(\tilde{G}',\varphi,\vec d)$}
The conclusion of \lemref{RC-proof-1} implies that the realization
system for $(\tilde{G},\varphi,\vec d)$ is rank deficient by one.
Next we show that removing the edge $i'j'$ results in a
direction network that has full rank on the colored graph $(G',\bgamma)$.
\begin{lemma}\lemlab{RC-proof-2}
The realization space of $(\tilde{G},\varphi,\vec d)$ is canonically identified
with that of $(\tilde{G}',\varphi,\vec d)$.
\end{lemma}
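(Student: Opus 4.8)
The plan is to prove that the two realization spaces agree by showing that the single edge $i'j'$ separating $(\tilde{G}',\varphi,\vec d)$ from $(\tilde{G},\varphi,\vec d)$ imposes a constraint that is already forced by the rest of the network. One inclusion is immediate: since $(\tilde{G}',\varphi,\vec d)$ is obtained from $(\tilde{G},\varphi,\vec d)$ by deleting the fiber over $i'j'$, every realization of the larger network is a realization of the smaller one, so the realization space of $(\tilde{G},\varphi,\vec d)$ sits inside that of $(\tilde{G}',\varphi,\vec d)$. Everything therefore reduces to the reverse inclusion: I would show that any realization of $(\tilde{G}',\varphi,\vec d)$ automatically satisfies the direction constraint on the fiber over $i'j'$, so that reinstating this edge with its horizontal direction cuts out nothing new. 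The claimed canonical identification is then simply the identity map on realizations, and combined with \lemref{RC-proof-1} it certifies that the common realization space is $2$-dimensional.

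To establish the redundancy of $i'j'$, recall that $i'j'$ lies on the unique cycle of the connected reflection-$(1,1)$ graph $X$ and has color $1$, while $X' = X \setminus i'j'$ is a spanning tree joining $i'$ to $j'$ by a path $P$. Since $P$ is a tree path and the cycle $P + i'j'$ has $\rho$-image $1 = \gamma_{i'j'}$, one computes $\rho(P) = 0$ in $\Z/2\Z$; as $X'$ is a tree it has trivial $\rho$-image and lifts to two disjoint copies, and the lift of $P$ starting at $\tilde{i}'_0$ therefore terminates at $\tilde{j}'_0$ within a single connected component of the lift of $X'$. The crucial geometric point is that every edge of $X$ carries the horizontal direction by construction, and — because the assignment of directions is symmetric and the reflection through the $y$-axis fixes the horizontal direction — both lifts of each such edge are again horizontal. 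Consequently the lifted path $P$ consists of horizontal edges, forcing $\vec p_{\tilde{i}'_0}$ and $\vec p_{\tilde{j}'_0}$ onto a common horizontal line, i.e.\ they share a $y$-coordinate.

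Finally I would invoke the symmetry equation \eqref{dn-realization2}: the fiber edge over $i'j'$ is $\tilde{i}'_0\tilde{j}'_1$, and reflection symmetry realizes $\vec p_{\tilde{j}'_1}$ as the reflection of $\vec p_{\tilde{j}'_0}$, which preserves the $y$-coordinate. Hence $\vec p_{\tilde{j}'_1} - \vec p_{\tilde{i}'_0}$ is horizontal, exactly the requirement imposed by the horizontal direction assigned to $i'j'$, so the constraint holds for free. This closes the reverse inclusion and yields the identification. The main obstacle is the bookkeeping of colors and sheets: one must verify that the lifted path really connects the two endpoints whose difference the edge $i'j'$ constrains, and that horizontality survives both the two-sheeted covering and the reflection. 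The reduction to $\rho(P)=0$, together with the reflection-invariance of horizontal lines, is what makes this go through cleanly; note that the tree $T$ and its direction $\vec v$ play no role in this lemma, since the argument is entirely about the horizontal part $X$ and the way its cycle edge $i'j'$ is pinned by the rest of $X$.
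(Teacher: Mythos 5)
Your proof is correct and takes essentially the same route as the paper: both show that the horizontal constraint on the fiber over $i'j'$ is already implied by the horizontal directions on $X - i'j'$ together with the fact that the reflection through the $y$-axis preserves $y$-coordinates, so the identification is just the identity map on realizations. The only cosmetic difference is that you track sheets explicitly via $\rho(P)=0$ along the path $P$, whereas the paper sidesteps this bookkeeping by noting that the two lifts of any vertex always share a $y$-coordinate, so that $X-i'j'$ merely spanning the vertices suffices.
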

\begin{proof}
In the proof of \lemref{RC-proof-1}, it was not essential that $X$ lifts to a
connected subgraph of $\tilde{G}$. It was only required that $X$ spans the vertices,
and this is true of $X - i'j'$.
Since the two lifts of a vertex must always lie on the same horizontal line in a realization,
if any lift of $i$ and any lift of $j$ lie on the same horizontal line, then all lifts do. It is then
easy to conclude all points lie on a single horizontal line.
\end{proof}

\paragraph{The realization space of $(\tilde{G},\varphi,\vec d^\perp)$}
Next, we consider what happens when we turn all the directions by $\pi/2$.
\begin{lemma}\lemlab{RC-proof-3}
The realization space of $(\tilde{G},\varphi,\vec d^\perp)$ has only collapsed solutions.
\end{lemma}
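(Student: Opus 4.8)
The plan is to recognize $(\tilde{G},\varphi,\vec d^\perp)$ as an instance of the collapsing construction already analyzed in the proof of \propref{reflection-22-collapse}, so that no new geometric work is required. Turning every direction by $\pi/2$ sends the horizontal directions on the edges of $X$ to \emph{vertical} directions, while it sends the common tree direction $\vec v$ to $\vec v^\perp$. Since $\vec v$ was chosen to be neither vertical nor horizontal, the same holds for $\vec v^\perp$. Hence $\vec d^\perp$ assigns a non-vertical, non-horizontal common direction to every edge of the spanning tree $T$ and the vertical direction to every edge of the reflection-$(1,1)$ graph $X$ --- which is exactly the direction assignment used in the proof of \propref{reflection-22-collapse}.

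Next I would record that $(G,\bgamma)$ is a reflection-$(2,2)$ graph: a Ross-circuit is reflection-Laman, and every reflection-Laman graph is reflection-$(2,2)$. Thus the decomposition into $T$ and the (assumed connected) reflection-$(1,1)$ graph $X$ is precisely of the kind required there, and the argument of \propref{reflection-22-collapse} applies verbatim with $\vec v$ replaced by $\vec v^\perp$. Concretely, the common direction $\vec v^\perp$ on $T$, together with the fact (from \propref{reflection-laman-decomp-lift}) that $T$ lifts to two copies of itself spanning all of $\tilde{G}$, forces every vertex to lie on a single line $L$ in direction $\vec v^\perp$ or on its reflection through the $y$-axis. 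Because the vertical direction is preserved by the reflection, and the lift of the connected $X$ is connected (again \propref{reflection-laman-decomp-lift}), the vertical directions on the edges of $X$ force all vertices onto a single vertical line, which must be the $y$-axis by reflection symmetry.

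Finally, every vertex must lie simultaneously on $L$, on the reflection of $L$, and on the $y$-axis. Since $\vec v^\perp$ is not vertical, $L$ meets the $y$-axis in a single point, and as that intersection point lies on the axis it is fixed by the reflection, so the reflection of $L$ meets the $y$-axis in the \emph{same} point; thus all three lines share exactly one common point, and every vertex is pinned there, giving a collapsed realization. The only mildly delicate step is the bookkeeping of the first paragraph --- verifying that rotation by $\pi/2$ keeps the tree direction off both coordinate axes while converting the horizontal $X$-directions into vertical ones --- after which the collapse is a direct reuse of \propref{reflection-22-collapse}. Note that no genericity is invoked: the direction network $\vec d^\perp$ is explicit, and we are proving collapse for this particular assignment rather than for a generic one.
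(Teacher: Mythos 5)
Your proposal is correct and matches the paper's argument: the paper's entire proof of \lemref{RC-proof-3} is the observation that $(\tilde{G},\varphi,\vec d^\perp)$ is exactly the construction used to prove \propref{reflection-22-collapse}, which is precisely your first paragraph. The remaining paragraphs simply re-execute that proof (tree direction $\vec v^\perp$ neither horizontal nor vertical, vertical directions on the connected lift of $X$, collapse to the single intersection point on the $y$-axis), and your closing remark that no genericity is needed here is also accurate.
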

\begin{proof}
This is exactly the construction used to prove \propref{reflection-22-collapse}.
\end{proof}

\paragraph{Perturbing $(\tilde{G},\varphi,\vec d)$}
To summarize what we have shown so far:
\begin{itemize}
\item[(a)] $(\tilde{G},\varphi,\vec d)$ has a $2$-dimensional realization space parameterized
by $\vec p_{\tilde{i}'_0}$ and identified with that of a full-rank direction network on  the Ross graph
$(G',\bgamma)$.
\item[(b)] There are points $\tilde{G}(\vec p)$ in this realization space where
$\vec p_{\tilde{i}'_0}\neq \vec p_{\tilde{j}'_1}$.
\item[(c)] $(\tilde{G},\varphi,\vec d^\perp)$ has a $1$-dimensional realization space containing only collapsed
solutions.
\end{itemize}
What we have not shown is that the realization space of $(\tilde{G},\varphi,\vec d)$
has \emph{faithful} realizations, since the ones we constructed all have many
coincident vertices.  \propref{ross-realizations} will imply the rest of the
theorem, provided that the above properties hold for any small perturbation of
$\vec d$, since some small perturbation of \emph{any} assignment of directions
to the edges of $(G',\bgamma)$ has only faithful realizations.
\begin{lemma}\lemlab{RC-proof-4}
Let $\vec{ \hat d'}$ be a perturbation of the directions $\vec d$ on the edges of $G'$ only.  If $\vec{ \hat d'}$
is sufficiently close to $\vec d|_{E(G')}$ ,
then there are realizations of the direction network
$(\tilde{G}',\varphi,\vec{ \hat d'})$ such that the direction of $\vec p_{ \tilde{j}'_1} - \vec p_{ \tilde{i}'_0}$ is nonzero and
a small perturbation of $\vec d_{ij}$.
\end{lemma}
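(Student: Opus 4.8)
The plan is to work with the realization spaces as the kernels of the linear system \eqref{colored-system} and to leverage the full-rank property of the unperturbed network. By \lemref{RC-proof-2}, the direction network $(\tilde{G}',\varphi,\vec d)$ on the Ross graph $(G',\bgamma)$ has full rank, so its solution space $W(\vec d)$ has dimension exactly $2$ (the ever-present scaling and vertical-translation solutions). Since $2$ is the smallest possible corank of \eqref{colored-system} on a Ross graph, and attaining the maximal rank is an open condition on the directions (see \secref{genericity}), every sufficiently small perturbation $\vec{\hat d'}$ of $\vec d|_{E(G')}$ produces a system of the same rank: the corank cannot drop below $2$, and it cannot exceed $2$ because $\vec d$ already realizes the maximal rank and rank is lower semicontinuous. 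Consequently the solution space $W(\vec{\hat d'})\subset \R^{2n}$ stays $2$-dimensional, and being the kernel of a matrix whose entries are polynomial in the directions and whose rank is locally constant, it varies continuously with $\vec{\hat d'}$ as a point of the Grassmannian of $2$-planes.

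First I would fix a distinguished unperturbed solution. By \lemref{RC-proof-5} (together with the identification in \lemref{RC-proof-2}) there is a realization $\vec p^0\in W(\vec d)$ in which the fiber over $i'j'$ is not collapsed, so that $\vec p^0_{\tilde{j}'_1} - \vec p^0_{\tilde{i}'_0}$ is a \emph{nonzero} vector; by the construction of $\vec d$ in the proof of \lemref{RC-proof-1} this vector is horizontal and points in the induced direction $\vec d_{i'j'}$. Note that $\vec p_{\tilde{j}'_1}$ is the reflection of $\vec p_{\tilde{j}'_0}$, hence a fixed linear function of the colored solution, so this difference vector depends continuously (indeed linearly) on the point of $W(\vec{\hat d'})$.

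Next I would transport $\vec p^0$ into the perturbed realization space by orthogonal projection: let $\vec p = \vec p(\vec{\hat d'})$ be the projection of $\vec p^0$ onto $W(\vec{\hat d'})$, which is a genuine realization of $(\tilde{G}',\varphi,\vec{\hat d'})$. Since $W(\vec{\hat d'}) \to W(\vec d)\ni \vec p^0$ as $\vec{\hat d'}\to \vec d|_{E(G')}$, continuity of the orthogonal projection gives $\vec p(\vec{\hat d'})\to \vec p^0$. In particular, the difference vector $\vec p_{\tilde{j}'_1} - \vec p_{\tilde{i}'_0}$ converges to the nonzero vector $\vec p^0_{\tilde{j}'_1} - \vec p^0_{\tilde{i}'_0}$. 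Therefore, for all $\vec{\hat d'}$ sufficiently close to $\vec d|_{E(G')}$, this difference vector is itself nonzero and its direction is a small perturbation of $\vec d_{i'j'}$, which is exactly the assertion of the lemma.

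The step I expect to carry the real content is the continuous variation of the solution space $W(\vec{\hat d'})$, which rests entirely on the corank being exactly $2$ throughout a neighborhood of $\vec d|_{E(G')}$. Once this is established, everything else is a routine limit argument; in particular, no genericity of $\vec d$ itself on $G'$ is required, only that it attains full rank. The one point to state carefully is that the passage from the colored solution $\vec p$ to the lifted difference $\vec p_{\tilde{j}'_1} - \vec p_{\tilde{i}'_0}$ is continuous, so that nonvanishing and the approximate direction are preserved in the limit.
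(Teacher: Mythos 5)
Your proof is correct and takes essentially the same route as the paper's: both arguments rest on the fact that full rank of the realization system on the Ross graph $(G',\bgamma)$ (from \lemref{RC-proof-1} and \lemref{RC-proof-2}) persists under small perturbations of the directions, so solutions vary continuously and the non-collapsed realization guaranteed by \lemref{RC-proof-5} survives with its difference vector nonzero and nearly horizontal. The paper expresses the continuity via the parameterization of the solution space by $\vec p_{\tilde{i}'_0}$, while you make the same point explicit through Grassmannian convergence of the kernels and orthogonal projection of a distinguished solution---a more detailed rendering of the same idea, not a different one.
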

\begin{proof}
The realization space is parameterized by $\vec p_{ \tilde{i}'_0}$ (for directions sufficiently close to $\vec d'$), and so $\vec p_{ \tilde{j}'_1}$ varies
continuously with the directions on the edges and $\vec p_{ \tilde{i}'_0}$.  Since there are realizations of $(\tilde{G}', \varphi, \vec d)$
with $\vec p_{ \tilde{i}_0} \neq \vec p_{ \tilde{j}_1}$, the Lemma follows.
\end{proof}
\lemref{RC-proof-4} implies that any sufficiently small perturbation of the directions assigned to the
edges of $G'$ gives a direction network that induces a well-defined direction on the edge $i'j'$ which
is itself a small perturbation of $\vec d_{i'j'}$.  Since the ranks of $(\tilde{G'},\varphi,\vec d')$
and  $(\tilde{G},\varphi,\vec d^\perp)$ are stable under small perturbations, this implies that
we can perturb $\vec d$ to a $\vec{\hat d}$ so that $\vec{\hat d}|_{E(G')}$ is generic in the sense of \propref{ross-realizations},
while preserving faithful realizability of $(\tilde{G},\varphi,\hat{\vec d})$ and full rank of the
realization system for $(\tilde{G},\varphi,\hat{\vec d}^\perp)$.  The Proposition is proved for when $X$ is
connected.

\paragraph{$X$ need not be connected}
The proof is then complete once we remove the additional assumption that $X$ was
connected.  Let $X$ have connected
components $X_1, X_2,\ldots,X_c$.  For each of the $X_i$, we can identify an edge
$(i'j')_k$ with the same properties as $i'j'$ above.

Assign directions to the tree $T$ as above.
For $X_1$, we assign directions exactly as above.  For each of the $X_k$ with $k\ge 2$,
we assign the edges of $X_k\setminus (i'j')_k$ the horizontal direction and $(i'j')_k$
a direction that is a small perturbation of horizontal.

With this assignment $\vec d$ we see that
for any realization of $(\tilde{G},\varphi,\vec d)$, each of the $X_k$, for $k\ge 2$
is realized as completely collapsed to a single point
at the intersection of the line $L$ and the $y$-axis.  Moreover,
in the direction network on $\vec d^\perp$, the directions on these $X_i$ are a small
perturbation of the ones used on $X$ in the proof of \propref{reflection-22-collapse}.
From this it follows that any realization of $(\tilde{G},\varphi,\vec d^\perp)$
is completely collapsed and hence full rank.

We now see that this new set of directions has properties (a), (b), and (c) above
required for the perturbation argument.  Since that argument makes no reference
to the decomposition, it applies verbatim to the case where $X$ is disconnected.
\eop

\subsection{Proof of \theoref{reflection-direction-network}}
The easier direction to check is necessity.
\paragraph{The Maxwell direction}
If $(G,\bgamma)$ is not reflection-Laman, then it contains either a
Laman-circuit with trivial $\rho$-image, or a violation of $(2,1)$-sparsity.
A violation of $(2,1)$-sparsity implies that the realization system \eqref{colored-system}
of $(\tilde{G},\varphi,\vec d^\perp)$ has a dependency, since the realization space is always at
least $1$-dimensional.

Suppose instead there is a Laman-circuit $G'$ with trivial $\rho$-image. Then any direction network
on $(G', \bgamma)$ is equivalent to a direction network on the (finite, uncolored, non-symmetric)
graph $G'$. (The lift $\tilde{G}'$ is two mirror images of $G'$.)
In this case, similar to \propref{same-rank}, $(G', \bgamma, \vec d)$ and
$(G', \bgamma, \vec d^\perp)$ have the same rank. Thus if $(G, \bgamma, \vec d^\perp)$ and
hence $(G', \bgamma, \vec d^\perp)$
has only collapsed realizations, so does $(G', \bgamma, \vec d)$ in which case $(G, \bgamma, \vec d)$
has no faithful realization.

\paragraph{The Laman direction}
Now let $(G,\bgamma)$ be a reflection-Laman graph and let $(G',\bgamma)$
be a Ross-basis of $(G,\bgamma)$.  For any edge $ij \notin G'$, adding it to
$G'$ induces a Ross-circuit\footnote{Recall that here we are using Ross-circuit
to refer to only one kind of circuit in the Ross matroid. The other type of circuit
cannot appear since reflection-Laman graphs do not have $(2,2)$ blocks
with trivial $\rho$-image.} which contains some edge $i'j'$ having the property
specified in \propref{ross-circuit-pairs}.  Note that $G' - ij +i'j'$ is again a Ross-basis.
We therefore can assume (after edge-swapping in this manner) for all $ij \notin G'$ that
$ij$ has the property from \propref{ross-circuit-pairs} in the Ross-circuit it induces.

We assign directions $\vec d'$ to the edges of $G'$ such that:
\begin{itemize}
\item The directions on each of the intersections of the Ross-circuits with $G'$ are generic in the sense
of \propref{ross-circuit-pairs}.
\item The directions on the edges of $G'$ that remain in the reduced graph $(G^*,\bgamma)$
are perpendicular to an assignment of directions on $G^*$ that is
generic in the sense of \propref{reflection-22-collapse}.
\item The directions on the edges of $G'$ are generic in the sense of \propref{ross-realizations}.
\end{itemize}
This is possible because the set of disallowed directions is the union of a finite number of
proper algebraic subsets in the space of direction assignments.  Extend to directions $\vec d$ on $G$
by assigning directions to the remaining edges as specified by \propref{ross-circuit-pairs}.  By construction,
we know that:
\begin{lemma}\lemlab{laman-1}
The direction network $(\tilde{G},\varphi,\vec d)$ has faithful realizations.
\end{lemma}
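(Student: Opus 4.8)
The plan is to produce a single faithful realization of $(\tilde{G},\varphi,\vec d)$ for free, by showing that the strongly faithful realization of the Ross-basis $(\tilde{G}',\varphi,\vec d')$ is already a realization of the full direction network. The point is that the directions on the edges $ij\notin G'$ were assigned in exactly such a way that they impose no new constraint on the placements coming from $G'$; all the real work has been done in \propref{ross-realizations} and \propref{ross-circuit-pairs}, and this lemma is essentially their assembly.

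First I would invoke \propref{ross-realizations}. Since $(G',\bgamma)$ is a Ross graph and, by the third genericity condition on $\vec d'$, the restriction $\vec d'$ is generic in the sense of that proposition, the direction network $(\tilde{G}',\varphi,\vec d')$ has a realization $\tilde{G}'(\vec p)$ that is \emph{strongly} faithful, i.e.\ no two vertices of the lift coincide. (This realization is unique up to scaling and vertical translation, but any representative suffices.) Next I would verify that the same placement $\vec p$ solves $(\tilde{G},\varphi,\vec d)$. The constraints coming from edges of $G'$ hold because $\tilde{G}'(\vec p)$ realizes $(\tilde{G}',\varphi,\vec d')$. For each edge $ij\notin G'$, the first genericity condition makes $\vec d'$ generic in the sense of \propref{ross-circuit-pairs} on the Ross-circuit that $ij$ induces; by the first bullet of that proposition the network $(\tilde{G}',\varphi,\vec d')$ induces a \emph{well-defined}, realization-independent direction on $ij$, and this is precisely the direction assigned to $\vec d_{ij}$. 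Hence $\tilde{G}'(\vec p)$ satisfies the constraint on $ij$ as well, and by the symmetry of the construction, on its companion lift. Thus $\vec p$ satisfies every equation of $(\tilde{G},\varphi,\vec d)$.

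Finally I would observe that $\tilde{G}$ and $\tilde{G}'$ have the same vertex set, so the strong faithfulness of $\tilde{G}'(\vec p)$ — no two lift-vertices coincide — immediately forces every edge of $\tilde{G}$, including the added edges (and any self-loops colored $1$), to have distinct endpoints. Therefore $\tilde{G}'(\vec p)$ is a faithful realization of $(\tilde{G},\varphi,\vec d)$, as claimed. The only point requiring care is the compatibility of the genericity hypotheses: I must be sure that one choice of $\vec d'$ can be simultaneously generic for \propref{ross-realizations} and for \propref{ross-circuit-pairs} on every induced Ross-circuit. This is the mildest part of the argument, since each requirement excludes only a proper algebraic subset of the direction assignments and there are finitely many such subsets; the substance of the lemma lies entirely in the two propositions it cites.
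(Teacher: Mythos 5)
Your proof is correct and takes essentially the same approach as the paper: the paper's one-sentence proof identifies the realization space of $(\tilde{G},\varphi,\vec d)$ with that of $(\tilde{G}',\varphi,\vec d')$ — which is exactly the edge-by-edge verification you spell out via the first bullet of \propref{ross-circuit-pairs} — and then applies \propref{ross-realizations} to get a strongly faithful, hence faithful, realization. The supporting points you flag (simultaneous genericity from finitely many proper algebraic subsets, and the induced directions on the edges outside the Ross-basis) are precisely what the paper establishes in the construction immediately preceding the lemma.
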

\begin{proof}
The realization space is identified with that of $(\tilde{G'},\varphi,\vec d')$, and $\vec d'$
is chosen so that \propref{ross-realizations} applies.
\end{proof}
\begin{lemma}\lemlab{laman-2}
In any realization of $(\tilde{G},\varphi,\vec d^{\perp})$, the Ross-circuits are realized with all their
vertices coincident and on the $y$-axis.
\end{lemma}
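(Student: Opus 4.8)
The plan is to localize the entire argument to the individual Ross-circuits $G_1,\ldots,G_t$ of $(G,\bgamma)$ and to read the collapse directly off the special-pair property of \propref{ross-circuit-pairs}. The key observation is that the direction assignment $\vec d$ has been constructed precisely so that, when restricted to each Ross-circuit, it is one of the special-pair assignments produced by \propref{ross-circuit-pairs}. Indeed, after the edge-swapping performed above, the unique edge $ij$ of $G_i$ not lying in the Ross-basis $G'$ is the distinguished edge $i'j'$ of \propref{ross-circuit-pairs}; the directions on the intersection $G_i\cap G' = G_i - ij$ were chosen generic in the sense of that proposition; and the direction on $ij$ is exactly the induced one. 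Hence $(\tilde{G}_i,\varphi,\vec d|_{G_i})$ and $(\tilde{G}_i,\varphi,(\vec d|_{G_i})^\perp)$ form a special pair, and in particular $(\tilde{G}_i,\varphi,(\vec d|_{G_i})^\perp)$ has only collapsed realizations.

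Next I would pass from the global network to the local ones by restriction. Given any realization $\tilde{G}(\vec p)$ of $(\tilde{G},\varphi,\vec d^\perp)$, I would restrict $\vec p$ to the vertices in the lift of $G_i$. Because $\vec d^\perp$ is defined edge-by-edge, we have $(\vec d^\perp)|_{G_i} = (\vec d|_{G_i})^\perp$, so this restriction satisfies every edge equation \eqref{dn-realization1} belonging to $G_i$ and remains symmetric; thus it is a realization of $(\tilde{G}_i,\varphi,(\vec d|_{G_i})^\perp)$. By the first paragraph this realization is collapsed, so all vertices in the lift of $G_i$ are placed at a single point $\vec q$. The symmetry constraint \eqref{dn-realization2} then forces $\vec q$ to be fixed by the nontrivial element of $\Z/2\Z$, which acts as the reflection through the $y$-axis; hence $\vec q$ lies on the $y$-axis. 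Since the Ross-circuits are vertex disjoint by \propref{ross-circuit-decomp}, this conclusion holds simultaneously and independently for each $G_i$, giving exactly the statement of the lemma.

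The thing to get right here is bookkeeping rather than any analytic obstacle: one must verify that the three genericity conditions imposed on $\vec d'$ in the construction, together with the edge-swapping, really do restrict to a set of directions on each Ross-circuit to which \propref{ross-circuit-pairs} applies verbatim. Once that matching is confirmed, the collapse of each $G_i$ and its placement on the $y$-axis are immediate, and no further perturbation or genericity argument beyond what \propref{ross-circuit-pairs} already supplies is needed at this step.
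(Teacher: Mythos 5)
Your proof is correct and takes essentially the same approach as the paper: the paper's one-line proof (``This follows from how we chose $\vec d$ and \propref{ross-circuit-pairs}'') is precisely your argument spelled out, namely that the construction makes $\vec d$ restrict on each Ross-circuit to a special-pair assignment, so restricting any realization of $(\tilde{G},\varphi,\vec d^\perp)$ to a circuit's lift yields a collapsed realization, whose single point must lie on the $y$-axis by the reflection symmetry. Your bookkeeping observations (the unique non-basis edge of each circuit being the distinguished edge after swapping, and $(\vec d^\perp)|_{G_i} = (\vec d|_{G_i})^\perp$) are exactly the details the paper leaves implicit.
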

\begin{proof}
This follows from how we chose $\vec d$ and \propref{ross-circuit-pairs}.
\end{proof}
As a consequence of \lemref{laman-2}, and the fact that we picked $\vec d$ so that $\vec d^\perp$
extends to a generic assignment of directions $(\vec d^*)^\perp$ on the reduced graph $(G^*,\bgamma)$
we have:
\begin{lemma}
The realization space of $(\tilde{G},\varphi,\vec d^\perp)$ is identified with that of
$(\tilde{G^*},\varphi, (\vec d^*)^\perp)$ which, furthermore, contains only collapsed solutions.
\end{lemma}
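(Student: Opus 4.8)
The plan is to prove the two assertions separately, handling the collapse statement first and then the identification.

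For the collapse, I would invoke \propref{reduced-graph}, which guarantees that the reduced graph $(G^*,\bgamma)$ is a reflection-$(2,2)$ graph. By construction, $\vec d$ was chosen so that $\vec d^\perp$ restricts, on the edges surviving in $G^*$, to an assignment $(\vec d^*)^\perp$ that is generic in the sense of \propref{reflection-22-collapse}. Applying \propref{reflection-22-collapse} to $(G^*,\bgamma)$ with these directions then yields immediately that every realization of $(\tilde{G^*},\varphi,(\vec d^*)^\perp)$ is collapsed, with all vertices at a common point which, by reflection symmetry, lies on the $y$-axis.

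For the identification, I would set up a linear \emph{restriction map} from the realization space of $(\tilde{G},\varphi,\vec d^\perp)$ to point-assignments on $V(G^*)$: to a solution $\tilde{G}(\vec p)$, assign each contracted vertex $v_i$ the common location of the (collapsed) Ross-circuit $G_i$, and assign each surviving vertex its own point. By \lemref{laman-2}, in any solution of $(\tilde{G},\varphi,\vec d^\perp)$ the vertices of each Ross-circuit coincide at a point of the $y$-axis, so this map is well defined. I would then verify three things. First, the image lands in the solution space of $(\tilde{G^*},\varphi,(\vec d^*)^\perp)$: an edge of $G^*$ inherited from $G$ has both endpoints at representative positions, so its constraint in \eqref{colored-system} is unchanged, while each self-loop colored $1$ introduced in the reduction is satisfied trivially, because its endpoint sits on the $y$-axis, forcing the two lifts to coincide so that the difference vector vanishes. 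Second, the map is injective: \lemref{laman-2} shows that once the representative positions are fixed, every internal vertex is pinned to its representative, so the full solution is recovered. Third, it is surjective: given a (necessarily collapsed, on the $y$-axis) solution on $G^*$, I would expand it by placing every vertex of each Ross-circuit at its representative; the internal Ross-circuit edges are then satisfied automatically, since a difference of two copies of a single $y$-axis point under any power of the reflection is the zero vector. This exhibits the two realization spaces as canonically isomorphic.

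The main obstacle I anticipate is bookkeeping at the contraction boundary: one must check that the correspondence between the edges of $G$ and of $G^*$, together with their assigned directions, is exact, and in particular that the extra color-$1$ self-loops of $(G^*,\bgamma)$ neither add genuine constraints in the forward direction nor obstruct the expansion in the backward direction. Both issues dissolve once \lemref{laman-2} places the Ross-circuits on the $y$-axis, since the reflection fixes that axis pointwise; hence every difference vector across a collapsed circuit---whether along an internal edge or across an added self-loop---is the zero vector and is trivially orthogonal to any assigned direction.
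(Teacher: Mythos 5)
Your proposal is correct and takes essentially the same route as the paper: the paper likewise identifies the two realization spaces by observing (via \lemref{laman-2}) that a Ross-circuit collapsed onto the $y$-axis imposes exactly the same constraint as a generically-directed self-loop colored $1$ at the contracted vertex, and then concludes collapse from \propref{reduced-graph} together with the genericity choice for \propref{reflection-22-collapse}. Your explicit restriction/expansion bijection, with the check that colors at the contraction boundary are harmless because the reflection fixes the axis pointwise, merely spells out details the paper compresses into a one-line observation.
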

Observe that a direction network for a single self-loop (colored $1$) with a generic direction only has solutions where
vertices are collapsed and on the $y$-axis.  Consequently, replacing a Ross-circuit with a single vertex
and a self-loop yields isomorphic realization spaces.  Since the reduced graph is reflection-$(2,2)$ by \propref{reduced-graph} and the directions
assigned to its edges were chosen generically for \propref{reflection-22-collapse},
that $(\tilde{G},\varphi,\vec d^\perp)$ has only collapsed solutions follows.
Thus, we have exhibited a special pair, completing the proof.
\eop

\paragraph{Remark} It can be seen that the realization space of a direction network as supplied by
\theoref{reflection-direction-network} has at least one degree of freedom for each edge that is not in a Ross basis.  Thus,
the statement cannot be improved to, e.g., a unique realization up to translation and scale.

\section{Infinitesimal rigidity of reflection frameworks} \seclab{reflection-laman-proof}
Let $(\tilde{G},\varphi,\bm{\ell})$ be a reflection framework, and let $(G,\bgamma)$
be the quotient graph with $n$ vertices.  The algebraic steps in this section are
similar to those in \secref{cone-laman-proof}.  For a reflection framework, the
realization space $\mathcal{R}(\tilde{G},\varphi,\tilde{\bm{\ell}})$,
defined by  \eqref{lengths-1}--\eqref{lengths-2} is canonically
identified with the solutions to:
\begin{eqnarray}\eqlab{colored-lengths}
||\gamma_{ij}\cdot \vec p_j - \vec p_i||^2 = \ell^2_{ij} & \qquad
\text{for all edges $ij\in E(G)$}.
\end{eqnarray}
As in \secref{direction-network}, we assume, without loss of generality, that $\Gamma$ acts
by reflections through the $y$-axis.

Computing the formal differential of \eqref{colored-lengths}, we obtain the system
\begin{eqnarray}\eqlab{colored-inf}
\iprod{\gamma_{ij}\cdot \vec p_j - \vec p_i}{\vec v_j - \vec v_i} = 0 & \qquad
\text{for all edges $ij\in E(G)$}
\end{eqnarray}
where the unknowns are the \emph{velocity vectors} $\vec v_i$.  A
realization is  \emph{infinitesimally rigid} if the system \eqref{colored-inf}
has rank $2n - 1$.  As in the case of cone frameworks, generically,
infinitesimal rigidity and rigidity coincide, and the non-generic set is
defined in the same way.

\subsection{Relation to direction networks}
Here is the core of the direction network method for reflection frameworks:
we can understand the rank of \eqref{colored-inf} in terms of a direction network.
\begin{prop}\proplab{rigidity-vs-directions}
Let $\tilde{G}(\vec p)$ be a realization of a reflection framework.
Define the direction $\vec d_{ij}$ to be
$\gamma_{ij}\cdot \vec p_j - \vec p_i$.  Then the rank of \eqref{colored-inf}
is equal to that of \eqref{colored-system} for the direction network
$(G,\bgamma,\vec d^{\perp})$.
\end{prop}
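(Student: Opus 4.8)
The plan is to show that, once the substitution $\vec d_{ij} = \gamma_{ij}\cdot\vec p_j - \vec p_i$ is made, the two linear systems being compared are literally the \emph{same} system of equations, so equality of ranks is immediate; the only real work is bookkeeping the perpendicular operation and the placement of the group elements $\gamma_{ij}$. First I would write the edge equation of \eqref{colored-inf} as the formal differential of \eqref{colored-lengths}, namely $\iprod{\gamma_{ij}\cdot\vec p_j - \vec p_i}{\gamma_{ij}\cdot\vec v_j - \vec v_i} = 0$, with the group element appearing on both the position difference and the velocity difference (exactly as in the rotation system \eqref{cone-inf-rig}, since the lift of the head vertex moves with velocity $\gamma_{ij}\cdot\vec v_j$). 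In terms of $\vec d$ this reads $\iprod{\vec d_{ij}}{\gamma_{ij}\cdot\vec v_j - \vec v_i} = 0$.

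Next I would unwind the realization condition \eqref{colored-system} for the direction network $(G,\bgamma,\vec d^\perp)$. A realization $\vec q$ assigns to the edge $ij$ the difference $\gamma_{ij}\cdot\vec q_j - \vec q_i$, and the condition is that this difference be parallel to the prescribed direction $\vec d^\perp_{ij}$, equivalently orthogonal to $\vec d_{ij}$. Hence the edge equation is $\iprod{\gamma_{ij}\cdot\vec q_j - \vec q_i}{\vec d_{ij}} = 0$, that is, $\iprod{\gamma_{ij}\cdot\vec q_j - \vec q_i}{\gamma_{ij}\cdot\vec p_j - \vec p_i} = 0$. By symmetry of the inner product this is the same equation as the infinitesimal rigidity equation above under the identification $\vec q \leftrightarrow \vec v$. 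Since that identification is a linear isomorphism carrying one coefficient matrix onto the other, the two matrices coincide and the ranks are equal, proving the proposition. Note that no case analysis on whether $\gamma_{ij}$ is trivial is needed, as the matching holds uniformly in $\gamma_{ij}$.

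The conceptual point, and the step I expect to require the most care to state correctly, is to explain why the direction network must be taken on $\vec d^\perp$ rather than on $\vec d$. For rotation frameworks, \propref{same-rank} matches infinitesimal rigidity with the direction network on $\vec d$ by applying $R_{\pi/2}$ to both arguments of each inner product and commuting it past $R_k^{\gamma_{ij}}$. Here $\gamma_{ij}$ is a reflection, which \emph{anti}-commutes with $R_{\pi/2}$ (one has $\gamma_{ij}R_{\pi/2} = -R_{\pi/2}\gamma_{ij}$ whenever $\gamma_{ij}\neq 0$); that argument therefore breaks, and the direction network on $\vec d$ and the infinitesimal rigidity matrix genuinely need not have the same rank, as flagged in the remark following \propref{same-rank}. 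Turning every direction by $\pi/2$, i.e.\ replacing $\vec d$ by $\vec d^\perp$, realigns the two systems so that the group elements land on matching sides; this is precisely what the passage from ``parallel to $\vec d^\perp_{ij}$'' to ``orthogonal to $\vec d_{ij}$'' accomplishes in the computation above, and it is the reflection-specific substitute for the commutation used in the orientation-preserving case.
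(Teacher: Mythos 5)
Your proof is correct and is essentially the paper's own argument: the paper's entire proof is the one-line instruction to exchange the roles of $\vec v_i$ and $\vec p_i$ in \eqref{colored-inf}, which is precisely your observation that, by symmetry of the inner product, the infinitesimal rigidity equations $\iprod{\vec d_{ij}}{\gamma_{ij}\cdot \vec v_j - \vec v_i} = 0$ are literally the realization equations of the direction network $(G,\bgamma,\vec d^{\perp})$, so the coefficient matrices coincide. Your write-up is in fact slightly more careful than the paper's: you restore the group element on the velocity difference (the displayed form of \eqref{colored-inf} omits the $\gamma_{ij}$ acting on $\vec v_j$, evidently a typo in light of \eqref{cone-inf-rig}, and the exchange argument needs it), and your closing observation that a reflection anti-commutes with $R_{\pi/2}$ correctly pinpoints why the statement must involve $\vec d^{\perp}$ rather than $\vec d$, in agreement with the remarks following \propref{same-rank} and \propref{rigidity-vs-directions}.
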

\begin{proof}
Exchange the roles of $\vec v_i$ and $\vec p_i$ in \eqref{colored-inf}.
\end{proof}

\subsection{Proof of \theoref{reflection-laman}} \seclab{refllamanproof}
The, more difficult, ``Laman direction'' of the Main Theorem follows immediately from
\theoref{reflection-direction-network} and \propref{rigidity-vs-directions}:
given a reflection-Laman graph \theoref{reflection-direction-network} produces a realization with
no coincident endpoints and a certificate that \eqref{colored-inf} has corank one. The
``Maxwell direction'' follows from a similar argument as that for \theoref{cone-laman}.
\eop

\paragraph{Remark}
The statement of \propref{rigidity-vs-directions} is \emph{exactly the same} as the analogous statement for
orientation-preserving cases of this theory.  What is different is that, for reflection frameworks,
the rank of $(G,\bgamma,\vec d^{\perp})$ is \emph{not}, the same
as that of $(G,\bgamma,\vec d)$.  By \propref{reflection-22-collapse}, the set of directions arising as
the difference vectors from point sets are \emph{always non-generic} on reflection-Laman graphs, so
we are forced to introduce the notion of a special pair.

\paragraph{Remark}
We can extend \theoref{reflection-laman} to $\Gamma$-actions with inverted edges
but which are otherwise free. Indeed, a similar argument as in \secref{rotnonfree}
applies here.

\bibliographystyle{plainnat}

\end{document}